\definecolor{winered}{rgb}{0.6,0,0}
\definecolor{lessblue}{rgb}{0,0,0.7}
\numberwithin{equation}{section}
\numberwithin{figure}{section}
\newtheorem{theorem}{Theorem}[section]
\newtheorem{lemma}[theorem]{Lemma}
\newtheorem{prop}[theorem]{Proposition}
\theoremstyle{definition}
\theoremstyle{remark}
\newtheorem{remark}[theorem]{Remark}
\date{\today}
\newcommand{\pa}{\partial}
\newcommand{\wt}{\widetilde}
\newcommand{\wh}{\widehat}
\newcommand{\eps}{\epsilon}
\newcommand{\dd}{\mathrm{d}}
\newcommand{\CI}{{\mathcal C^\infty}}
\newcommand{\CIc}{{\mathcal C^\infty_{\mathrm c}}}
\newcommand{\cC}{{\mathcal C}}
\newcommand{\cE}{{\mathcal E}}
\newcommand{\cI}{{\mathcal I}}
\newcommand{\cK}{{\mathcal K}}
\newcommand{\cQ}{{\mathcal Q}}
\newcommand{\cR}{{\mathcal R}}
\newcommand{\cS}{{\mathcal S}}
\newcommand{\cU}{{\mathcal U}}
\newcommand{\cV}{{\mathcal V}}
\newcommand{\cW}{{\mathcal W}}
\newcommand{\ft}{{\mathfrak t}}
\newcommand{\bft}{{\mathbf t}}
\newcommand{\R}{{\mathbb R}}
\DeclareMathOperator{\supp}{supp}
\title[DN map for a semilinear wave equation]{The Dirichlet-to-Neumann map for a semilinear wave equation on Lorentzian manifolds}
\author{Peter Hintz}
  \address{Department of Mathematics, Massachusetts Institute of Technology, Cambridge, MA 02139,USA
  (\tt{phintz@mit.edu})
  }
  \author{Gunther Uhlmann}
\address{Department of Mathematics, University of Washington, Seattle, WA 98195, USA; Institute for Advanced Study, 
The Hong Kong University of Science and Technology, Kowloon, Hong Kong, China (\tt{gunther@math.washington.edu})
}
  \author{Jian Zhai}
\address{Institute for Advanced Study,
  The Hong Kong University of Science and Technology, Kowloon, Hong Kong, China
  (\tt{iasjzhai@ust.hk})}
\begin{document}

\begin{abstract}
  We consider the semilinear wave equation $\Box_g u+a u^4=0$, $a\neq 0$, on a Lorentzian manifold $(M,g)$ with timelike boundary. We show that from the knowledge of the Dirichlet-to-Neumann map one can recover the metric $g$ and the coefficient $a$ up to natural obstructions. Our proof rests on the analysis of the interaction of distorted plane waves together with a scattering control argument, as well as Gaussian beam solutions.
\end{abstract}

\keywords{Inverse boundary value problem, semilinear equation, Lorentzian manifold}

\maketitle

\section{Introduction}

Let $(M,g)$ be a $(1+3)$-dimensional Lorentzian manifold with boundary $\partial M$, where the metric $g$ is of signature $(-,+,+,+)$. We assume that $M=\mathbb{R}\times N$ where $N$ is a connected manifold with boundary $\partial N$, and write the metric $g$ as
\begin{equation}
\label{EqIMetric}
  g =-\alpha(\ft,x')\mathrm{d}\ft^2+\kappa(\ft,x'),
\end{equation}
where $x=(\ft,x')=(x^0,x^1,x^2,x^3)$ are local coordinates on $M$; here, $\alpha:\mathbb{R}\times N\rightarrow(0,\infty)$ is a smooth function and $\kappa(\ft,\,\cdot\,)$ is a Riemannian metric on $N$ depending smoothly on $\ft\in \mathbb{R}$. The boundary $\partial M=\mathbb{R}\times\partial N$ of $M$ is then timelike. Let $\nu=\nu_g$ denote the unit outer normal vector field to $\partial M$ with respect to the metric $g$. Assume that $\partial M$ is null-convex, which means that $\mathrm{II}(V,V)=g(\nabla_V\nu,V)\geq 0$ for all null vectors $V\in T(\partial M)$; see \cite{hintz2017reconstruction} for a discussion of this condition. We consider the semilinear wave equation
\begin{alignat}{2}
  \Box_g u(x)+a(x)u(x)^4&=0,&\quad&\text{on }M,\nonumber\\
\label{maineq}
  u(x)&=f(x),&\quad& \text{on }\partial M,\\
  u(\ft,x')&=0, &\quad& \ft<0,\nonumber
\end{alignat}
where $\Box_g=(-\det g)^{-1/2}\partial_j((-\det g)^{1/2}g^{j k}\partial_k)$ is the wave operator (d'Alembertian) on $(M,g)$, and $a\in\CI(M)$ is a nowhere vanishing function (thus either everywhere positive or everywhere negative).

We only consider boundary sources $f$ with $\mathrm{supp}\,f\subset (0,T)\times\partial N$ and introduce the Dirichlet-to-Neumann (DN) map $\Lambda_{g,a}$ (measured in $(0,T)\times\partial N$) defined as
\[
  \Lambda_{g,a} f = \partial_\nu u\vert_{(0,T)\times\partial N}=\nu^j \partial_j u\vert_{(0,T)\times\partial N},
\]
where $u$ is the solution of \eqref{maineq}. The well-posedness of the initial boundary value problem \eqref{maineq} with small Dirichlet data $f\in \mathcal{C}^m$, $m\geq 6$, can be established as in \cite{hintz2020inverse}. Thus $\Lambda_{g,a}f$ is well-defined for such $f$. We will study the inverse problem or recovering the Lorentzian metric $g$ and the nonlinear coefficient $a$ from $\Lambda_{g,a}$. We choose to consider the semilinear equation with a quartic nonlinear term because it requires the least amount of technicalities in the analysis of nonlinear interactions of distorted plane waves or Gaussian beams. Much as in the setting of manifolds without boundary (see the references below), more general nonlinearities can be dealt with in a similar way.

Since the work \cite{kurylev2018inverse}, rapid progress has been made on the study of inverse problems for nonlinear equations. See \cite{lassas2018inverse,kurylev2014inverse,lassas2017determination,de2019nonlinear,wang2019inverse,de2018nonlinear,uhlmann2018determination,chen2019detection,feizmohammadi2019recovery,hintz2020inverse,chen2020inverse,balehowsky2020inverse,lassas2020uniqueness,feizmohammadi2020inverse,lai2021reconstruction} for results on hyperbolic equations and \cite{assylbekov2017direct,carstea2019reconstruction,carstea2020inverse,lassas2019inverse,lassas2019partial,feizmohammadi2019inverse,krupchyk2020remark,krupchyk2019partial,assylbekov2020inverse,krupchyk2020inverse,kian2020partial,carstea2021inverse} for elliptic equations. For hyperbolic equations, the recovery of time-dependent coefficients is possible for some nonlinear equations, whereas the corresponding problems for linear equations are still largely open. See \cite{uhlmann2020inverse} for an overview of the recent progress on inverse problems for nonlinear hyperbolic equations.

The recovery of a Lorentzian metric from the \textit{source-to-solution map} associated with a semilinear equation is considered in \cite{kurylev2018inverse}, where the authors use the nonlinear interactions of distorted plane waves to recover the light observation sets, and use them to reconstruct the Lorentzian geometry. The approach was then further generalized to deal with other different types of nonlinear equations in \cite{lassas2018inverse,kurylev2014inverse,lassas2017determination,wang2019inverse,uhlmann2018determination,balehowsky2020inverse}; recently \cite{tzou2021point} it was shown that recovery of a Riemannian metric is possible when one measures the solution of a forced semilinear wave equation only at a single point for some time. Distorted plane waves can also be used to recover the coefficients of (linear) lower order terms \cite{chen2019detection,chen2020inverse} and the nonlinear terms \cite{lassas2018inverse,de2018nonlinear,hintz2020inverse}. In the works \cite{feizmohammadi2019recovery,uhlmann2019inverse,hintz2020inverse,feizmohammadi2020inverse}, Gaussian beams, instead of distorted planes waves, are used to study inverse problems for nonlinear wave equations. We will use both distorted plane waves and Gaussian beams in this paper.

First, notice that there a natural geometric obstruction to the reconstruction of $g$ and $a$:
\begin{lemma}
If $\Psi:M\rightarrow M$ is a diffeomorphism and $\Psi\vert_{\partial M}=\mathrm{Id}$, then
\[
  \Lambda_{g,a}=\Lambda_{\Psi^*g,\Psi^*a}.
\]
\end{lemma}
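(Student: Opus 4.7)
The plan is a two-step argument based on naturality of the wave operator. Given boundary data $f$ for which \eqref{maineq} is well-posed, let $u$ be the solution for the pair $(g,a)$, and set $\tilde u := \Psi^*u = u\circ\Psi$. I will verify that $\tilde u$ solves the corresponding problem for $(\Psi^*g,\Psi^*a)$ with the same boundary data $f$, and that the Neumann data extracted with respect to the two metrics coincide on $\partial M$; together these give $\Lambda_{\Psi^*g,\Psi^*a}f=\Lambda_{g,a}f$.

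For the first point, naturality of the d'Alembertian under diffeomorphisms yields $\Box_{\Psi^*g}\tilde u=\Psi^*(\Box_g u)$, and therefore
\[
 \Box_{\Psi^*g}\tilde u + (\Psi^*a)\tilde u^{4} = \Psi^*\bigl(\Box_g u + a u^{4}\bigr)=0.
\]
The boundary trace is unchanged, $\tilde u|_{\partial M}=u|_{\partial M}=f$, because $\Psi|_{\partial M}=\mathrm{Id}$. The only bookkeeping is the past-vanishing condition for $\tilde u$; I will take $\Psi$ to be the identity for $\ft<0$ (the standard normalization for gauge lemmas of this type, which may be enforced by a cutoff away from $\mathrm{supp}\,u$ without affecting $\Lambda_{\Psi^*g,\Psi^*a}$).

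For the second point the task reduces to showing $d\Psi_p(\nu_{\Psi^*g}(p))=\nu_g(p)$ for each $p\in\partial M$. Writing $\tilde\nu=\nu_{\Psi^*g}(p)$ and using that $\Psi|_{\partial M}=\mathrm{Id}$ forces $d\Psi_p$ to restrict to the identity on $T_p(\partial M)$, for any $X\in T_p(\partial M)$ I compute
\[
 g_p(d\Psi_p\tilde\nu,X) = g_p(d\Psi_p\tilde\nu,d\Psi_p X) = (\Psi^*g)_p(\tilde\nu,X) = 0,
\]
so $d\Psi_p\tilde\nu$ is $g$-normal to $\partial M$. The analogous computation with $X$ replaced by $\tilde\nu$, using that $\partial M$ is timelike (so unit normals are spacelike with $g(\nu,\nu)=1$), gives $g_p(d\Psi_p\tilde\nu,d\Psi_p\tilde\nu)=1$; the outward orientation is correct because $\Psi$ preserves the interior of $M$. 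The chain rule then yields
\[
 \partial_{\tilde\nu}\tilde u\big|_p = du_p(d\Psi_p\tilde\nu) = du_p(\nu_g) = \partial_{\nu_g}u\big|_p,
\]
which is the required equality of DN maps.

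There is no genuine obstacle here; the only subtlety is the past-vanishing caveat noted above, and the heart of the matter is naturality of $\Box_g$ together with a short linear-algebra check on $T_pM$ at boundary points.
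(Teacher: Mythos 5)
Your proof is correct and follows the same route as the paper: pull back the solution by $\Psi$, use naturality of $\Box_g$ to see that $\tilde u=\Psi^*u$ solves the problem for $(\Psi^*g,\Psi^*a)$ with the same Dirichlet data, and identify the Neumann data via the transformation $d\Psi_p(\nu_{\Psi^*g})=\nu_g$ of the unit outer normal, which you verify by a short linear-algebra computation the paper states in one line. Your observation about the past-vanishing condition for $\tilde u$ is a legitimate point the paper glosses over, and the normalization you propose (arranging that $\Psi$ maps $\{\ft<0\}$ into the complement of $\mathrm{supp}\,u$, e.g.\ by taking $\Psi=\mathrm{Id}$ for $\ft<0$) is the natural way to handle it.
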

\begin{proof}
  If $\Box_g u+a u^4=0$, then
  \[
    0=\Psi^*(\Box_g u+a u^4)=\Box_{\Psi^*g}(\Psi^*u)+(\Psi^*a)(\Psi^*u)^4.
  \]
  Now, if $u|_{\pa M}=f$, then $(\Psi^*u)|_{\pa M}=f$ also; moreover, the outer unit normals at a point $x\in\pa M$ with respect to $g$ and $\Psi^*g$ are equal to $\nu$ and $\Psi^*\nu=(\Psi^{-1})_*\nu$, respectively. Therefore, writing $\pa_{\Psi^*\nu}$ for differentiation along $\Psi^*\nu$, we compute at $x\in\pa M$
  \[
    (\Lambda_{\Psi^*g,\Psi^*a}f)(x) = \bigl(\pa_{\Psi^*\nu}(\Psi^*u)\bigr)(x) = \bigl(\Psi^*(\pa_\nu u)\bigr)(x) = \pa_\nu u(x) = (\Lambda_{g,a}f)(x);
  \]
  the penultimate equality uses that $\Psi(x)=x$.
\end{proof}

There is another invariance of the DN map $\Lambda_{g,a}$ related to a conformal change of the metric $g$. 
\begin{lemma}\label{conf_invar}
Let $\beta$ be a smooth function on $M$, such that
\begin{equation}\label{eq_beta}
\beta\vert_{\partial M}=0,\quad\partial_\nu\beta\vert_{\partial M}=0,\quad \square_g e^{-\beta}=0,
\end{equation}
then we have
\[
\Lambda_{e^{-2\beta}g,e^{-\beta}a}=\Lambda_{g,a}.
\]
\end{lemma}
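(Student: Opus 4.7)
The plan is to build, from any solution $u$ of $\Box_g u + au^4 = 0$ with boundary data $f$, a solution $\tilde u$ of the rescaled problem $\Box_{\tilde g}\tilde u + (e^{-\beta}a)\tilde u^4 = 0$ with $\tilde g := e^{-2\beta}g$ and the \emph{same} boundary data $f$, and then to verify that $\pa_{\tilde\nu}\tilde u|_{\pa M} = \pa_\nu u|_{\pa M}$. Because $\beta|_{\pa M} = 0$, preserving the Dirichlet trace forces any multiplicative rescaling $\tilde u = h u$ to satisfy $h|_{\pa M} = 1$; guided by the conformal weight $-(n-2)/2 = -1$ of scalar fields on a $4$-manifold, I would try the ansatz $\tilde u := e^\beta u$.

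To confirm the ansatz, I would apply the standard pointwise conformal identity
\[
  \Box_{e^{2\phi}g} v = e^{-2\phi}\bigl(\Box_g v + (n-2)\langle d\phi, dv\rangle_g\bigr)
\]
with $n = 4$, $\phi = -\beta$, and $v = e^\beta u$, and then expand $\Box_g(e^\beta u)$ by the product rule. The cross term $2\langle d e^\beta, du\rangle_g = 2 e^\beta\langle d\beta, du\rangle_g$ cancels exactly with the contribution $-2\langle d\beta, d(e^\beta u)\rangle_g$ produced by the $(n-2)$-correction, so all first-order-in-$u$ terms disappear and one is left with
\[
  \Box_{\tilde g}\tilde u = e^{2\beta}\bigl(e^\beta\Box_g u + u\,\Box_g e^\beta - 2 u\,e^\beta|d\beta|_g^2\bigr).
\]
At this point I would invoke the hypothesis $\Box_g e^{-\beta} = 0$: a direct chain-rule expansion shows this is equivalent to the pointwise identity $\Box_g e^\beta = 2 e^\beta|d\beta|_g^2$, which kills the remaining two zeroth-order-in-derivatives terms and yields the clean intertwining relation $\Box_{\tilde g}\tilde u = e^{3\beta}\Box_g u$. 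Substituting $\Box_g u = -a u^4$ and $u = e^{-\beta}\tilde u$ gives $\Box_{\tilde g}\tilde u + (e^{-\beta}a)\tilde u^4 = 0$. The support condition for $\ft < 0$ is inherited from $u$, so by the well-posedness statement cited in the introduction $\tilde u$ is the unique solution of the new problem.

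Finally, since $\beta|_{\pa M} = 0$, the two metrics $g$ and $\tilde g$ agree on $T(\pa M)$ and their inverses agree on the conormal bundle, so the $\tilde g$-unit outer normal satisfies $\tilde\nu = \nu$ along $\pa M$. Hence
\[
  \Lambda_{\tilde g,e^{-\beta}a}f = \pa_\nu(e^\beta u)|_{\pa M} = (\pa_\nu\beta)\,e^\beta u|_{\pa M} + e^\beta\,\pa_\nu u|_{\pa M} = \pa_\nu u|_{\pa M} = \Lambda_{g,a}f,
\]
where the two hypotheses $\beta|_{\pa M} = 0$ and $\pa_\nu\beta|_{\pa M} = 0$ together kill the first summand. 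The only really delicate step is the intertwining calculation; the otherwise unmotivated assumption $\Box_g e^{-\beta} = 0$ is manufactured precisely so that the $\Box_g\beta$ and $|d\beta|_g^2$ terms cancel and one recovers a pure conformal factor on the right-hand side.
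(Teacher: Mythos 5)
Your proof is correct and rests on exactly the same idea as the paper's: set $\tilde u = e^\beta u$ and invoke the $4$-dimensional conformal intertwining of $\Box_g$, which together with $\Box_g e^{-\beta}=0$ yields $\Box_{e^{-2\beta}g}(e^\beta u)=e^{3\beta}\Box_g u$, after which the boundary conditions $\beta|_{\pa M}=\pa_\nu\beta|_{\pa M}=0$ give equal Dirichlet and Neumann traces. The only difference is cosmetic: the paper establishes the intertwining identity $e^{3\beta}\Box_g(e^{-\beta}v)=\Box_{e^{-2\beta}g}v+\tilde q v$ via a symmetry-and-principal-symbol argument, whereas you derive the same relation by direct expansion from the standard formula $\Box_{e^{2\phi}g}v=e^{-2\phi}(\Box_g v+(n-2)\langle d\phi,dv\rangle_g)$, which is entirely equivalent.
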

\begin{proof}
The key formula is the well-known
\begin{equation}
\label{EqConf}
  e^{3\beta}\Box_g(e^{-\beta}v) = \Box_{e^{-2\beta}g}v + \tilde q v,\quad \tilde q=e^{3\beta}\Box_g(e^{-\beta}),
\end{equation}
valid for any Lorentzian metric $g$ on a 4-dimensional spacetime. Indeed, the operators $e^{3\beta}\Box_g e^{-\beta}$ and $\Box_{e^{-2\beta}g}+\tilde q$ are both symmetric with respect to the volume density $|\mathrm{d}(e^{-2\beta}g)|=e^{-4\beta}|\mathrm{d}g|$ and have the same principal symbol, hence their difference, being a symmetric first order operator with real coefficients, is in fact an operator of order $0$, and since both operators give the same result when acting on the constant function $1$, the formula~\eqref{EqConf} follows.

Denote now $\widetilde{g}=e^{-2\beta}g$, $\widetilde{a}=e^{-\beta}a$ with $\beta$ satisfying \eqref{eq_beta}. Let $v=e^{\beta}u$, then we have
\[
\square_{\widetilde{g}}v+\widetilde{a}v^4=\square_{e^{-2\beta}g}(e^{\beta}u)+\widetilde{a}e^{4\beta}u^4=e^{3\beta}(\square_gu+qu+a u^4)=e^{3\beta}(\square_gu+a u^4),
\]
where $q=-e^\beta\Box_g e^{-\beta}=0$ in $M$. Notice that, by the assumption on $\beta$, $\nu_{e^{-2\beta}g}=\nu_g$, and if $u\vert_{\partial M}=f$, then $v\vert_{\partial M}=f$ also. Moreover, we have
\[
\Lambda_{e^{-2\beta}g,e^{-\beta}a}f=\partial_\nu v=\partial_\nu u=\Lambda_{g,a}f.\qedhere
\]
\end{proof}

Recall that a smooth curve $\mu:(a,b)\rightarrow M$ is called causal if $g(\dot{\mu}(s),\dot{\mu}(s))\leq 0$ and $\dot{\mu}(s)\neq 0$ for all $s\in(a,b)$. Given $p,q\in M$, we denote $p\leq q$ if $p= q$ or $p$ can be joined to $q$ by a future-pointing causal curve. We say $p<q$ if $p\leq q$ and $p\neq q$. We denote the causal future of $p\in M$ by $J_g^+(p)=\{q\in M: p\leq q\}$ and the causal past of $q\in M$ by $J_g^-(q)=\{p\in M: p\leq q\}$. The curve $\mu$ is called time-like if $g(\dot{\mu}(s),\dot{\mu}(s))< 0$ for all $s\in(a,b)$. We denote $p\ll q$ if $p\neq q$ and there is a future-pointing time-like path from $p$ to $q$. Then the chronological future of $p\in M$ is the set $I_g^+(p)=\{q\in M \colon p\ll q\}$ and the chronological past of $q\in M$ is $I_g^-(q)=\{p\in M \colon p\ll q\}$. We also denote $J_g(p,q):=J^+_g(p)\cap J^-_g(q)$ and $I_g(p,q):=I^+_g(p)\cap I^-_g(q)$.
We will reconstruct the metric $g$ and the coefficient $a$ in the set
\begin{equation}
\label{EqDiamond}
  \mathbb{U}_g=\bigcup_{p,q\in (0,T)\times\partial N}J_g(p,q).
\end{equation}
We shall make the simplifying assumption that no null-geodesics in $\mathbb{U}_g$ has conjugate points. The main result of this paper is as follows.

\begin{theorem}
\label{ThmI}
 Suppose we are given two manifolds $M,\widetilde{M}$ smooth Lorentzian metrics $g,\widetilde{g}$ on $M,\widetilde{M}$ of the form~\eqref{EqIMetric} with respect to product decompositions $M=\R\times N$, $\widetilde{M}=\R\times\widetilde{N}$. Suppose $\partial N\cong\partial\widetilde{N}$, and identify $\R\times\partial N$ with $\R\times\partial\widetilde{N}$. Assume that null-geodesics in $(\mathbb U_g,g)$, resp.\ $(\mathbb U_{\wt g},\wt g)$ have no conjugate points. Suppose moreover we are given smooth nonvanishing functions $a,\widetilde{a}$ on $M,\widetilde{M}$. Assume that the Dirichlet-to-Neumann maps agree, that is, $\Lambda_{g,a}=\Lambda_{\widetilde{g},\widetilde{a}}$. Then there exist a diffeomorphism $\Psi\colon\mathbb U_g\to\mathbb U_{\wt g}$ with $\Psi\vert_{(0,T)\times\partial N}=\mathrm{Id}$ and a smooth function $\beta\in\CI(M)$, $\beta\vert_{(0,T)\times\partial N}=\partial_\nu\beta\vert_{(0,T)\times\partial N}=0$, so that, in $\mathbb U_g$,
  \[
    \Psi^*\widetilde{g}=e^{-2\beta}g,\quad
    \Psi^*\widetilde{a}=e^{-\beta}a,\quad \square_g e^{-\beta}=0.
  \]
  In particular, if $a=\wt a=c\neq 0$ are equal to the same nonzero constant, then $\beta=0$ and $\Psi^*\wt g=g$ in $\mathbb{U}_g$, that is, $\Psi$ is an isometry.
\end{theorem}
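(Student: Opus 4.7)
First I would study the fourth-order mixed derivative of $\Lambda_{g,a}$ at vanishing boundary data. Writing $f=\sum_{j=1}^{4}\epsilon_j f_j$ and differentiating the DN map four times at $\epsilon=0$, one extracts an expression built from the quartic coefficient $a$, the four linear solutions $u_j$ of $\Box_g u_j=0$ with Dirichlet data $f_j$, and a fifth test solution paired by integration by parts. Choosing the $f_j$ so that the $u_j$ are distorted plane waves conormal to null hypersurfaces $K_j\subset\mathbb U_g$ emanating from four boundary points, the nonlinearity $a u_1 u_2 u_3 u_4$ generates new singularities at generic four-fold transversal intersection points $q\in K_1\cap K_2\cap K_3\cap K_4$; these propagate along the future null cone from $q$ and eventually arrive on $(0,T)\times\partial N$. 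To detect these intrinsic singularities rather than mere reflections of the individual $u_j$ off the timelike boundary, I would invoke a scattering control argument: solving an auxiliary boundary control problem allows one to effectively subtract off the reflected linear contributions, so that the residual singular behaviour of $\Lambda_{g,a}f-\Lambda_{\wt g,\wt a}f$, which vanishes by hypothesis, isolates the four-wave interaction signal at $q$.

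Arranging the four boundary points so that $q$ ranges over a dense subset of $\mathbb U_g$, one recovers the boundary light observation set of every such $q$. The no-conjugate-point assumption then allows one to pass, following a boundary-adapted version of the Kurylev--Lassas--Uhlmann reconstruction, from this data to the conformal structure of $(\mathbb U_g,g)$. This yields a diffeomorphism $\Psi\colon\mathbb U_g\to\mathbb U_{\wt g}$ with $\Psi|_{(0,T)\times\partial N}=\mathrm{Id}$ and a smooth function $\beta\in\CI(M)$ satisfying $\Psi^*\wt g=e^{-2\beta}g$ in $\mathbb U_g$; the boundary vanishing $\beta|_{(0,T)\times\partial N}=\partial_\nu\beta|_{(0,T)\times\partial N}=0$ follows because the equality $\Lambda_{g,a}=\Lambda_{\wt g,\wt a}$ already forces matching induced boundary metrics and second fundamental forms on $\partial M$.

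Having recovered $g$ up to the conformal factor $e^{-2\beta}$, I would use Gaussian beams to pin down $\beta$ and $a$. For a fixed null geodesic $\gamma\subset\mathbb U_g$, construct concentrating Gaussian beam quasimodes for $\Box_g$ along $\gamma$, and insert these in place of the $u_j$ in the same fourth-order linearization identity. Stationary phase reduces the resulting expression to an integral along $\gamma$ whose leading-order coefficient depends on $a$, $\wt a$, the conformal factor, and transport coefficients determined by Jacobi fields along $\gamma$ (the latter already fixed by the recovered conformal class). Varying $\gamma$ and comparing the two models yields, via the conformal identity~\eqref{EqConf} of Lemma~\ref{conf_invar}, the relations $\Psi^*\wt a=e^{-\beta}a$ and $\Box_g e^{-\beta}=0$; when $a=\wt a$ is a nonzero constant, the boundary vanishing of $\beta$ together with uniqueness for $\Box_g e^{-\beta}=0$ forces $\beta\equiv 0$, so that $\Psi$ is an isometry on $\mathbb U_g$.

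The main obstacle I expect is precisely the scattering control step: in the closed-manifold or asymptotic-measurement settings one can directly attribute new singularities in the measured data to interior nonlinear interactions, whereas the timelike boundary here produces reflected linear singularities of comparable strength that must be carefully subtracted before the four-wave contribution becomes visible. A secondary difficulty is that, unlike in the source-to-solution set-up, the conformal change in Lemma~\ref{conf_invar} is a genuine obstruction, so the Gaussian beam asymptotics must be carried out with enough precision to cleanly disentangle the recovery of $a$ from the recovery of $\beta$.
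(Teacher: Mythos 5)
Your proposal reproduces the overall architecture of the paper's argument: fourth-order linearization to isolate the four-wave interaction; distorted plane waves conormal to null hypersurfaces; scattering control to neutralize linear boundary reflections; reconstruction of (earliest) boundary light observation sets and hence the conformal class of $g$ on $\mathbb U_g$; and a Gaussian-beam/stationary-phase step to reduce to the model $\Box_g v + q v + e^\beta\wt a\,v^4=0$ with $q=-e^\beta\Box_g e^{-\beta}$ and then show $q\equiv 0$ and $\Psi^*\wt a = e^{-\beta}a$. That is essentially the paper's route.

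However, there is a genuine error in your final step for the constant-coefficient case. You argue that when $a=\wt a=c\neq 0$, the vanishing of $\beta$ and $\partial_\nu\beta$ on $(0,T)\times\partial N$ together with $\Box_g e^{-\beta}=0$ forces $\beta\equiv 0$ by ``uniqueness for $\Box_g e^{-\beta}=0$.'' This would require a unique continuation principle for the wave operator $\Box_g$ with non-analytic, time-dependent coefficients, and the paper explicitly cautions (in the remark after Theorem~\ref{ThmI}, citing Alinhac's counterexamples) that UCP is an open problem here and may even fail; the possible failure of UCP is precisely why the stated conclusion must be phrased in terms of the extra gauge $\Box_g e^{-\beta}=0$ rather than $\beta=0$. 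The correct mechanism in the paper is purely algebraic: once one has $\Psi^*\wt a = e^{-\beta}a$ in $\mathbb U_g$, setting $a=\wt a=c\neq 0$ gives $c=e^{-\beta}c$ pointwise, hence $e^{-\beta}\equiv 1$ and $\beta\equiv 0$, with no appeal to unique continuation at all. You should replace the unique-continuation appeal with this pointwise algebraic deduction from the recovered relation between $a$ and $\wt a$.

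A minor further imprecision: you derive $\beta|_{\partial M}=\partial_\nu\beta|_{\partial M}=0$ from matching of the induced metric and second fundamental form. The paper instead proves full Taylor-series agreement of $g$ and $\wt g$ at $(0,T)\times\partial N$ in boundary normal coordinates via the geometric optics expansion of $\Lambda_g^{\rm lin}$ (Theorem~\ref{bdrydetermination}); this stronger jet statement is also what underpins the extension of $g$ to the sleeve $M_1\setminus M$, which your scattering control step implicitly relies on. Your two-term version is not wrong for the stated boundary conditions on $\beta$, but you should be aware that the full jet determination is actually used elsewhere in the argument.
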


\begin{remark}
We can not conclude from the result of Theorem \ref{ThmI} that $\beta=0$ in $\mathbb{U}_g$, if the \textit{Unique Continuation Principle} (UCP) fails for the operator $\square_g$. If there exists a function $\beta\in\CI(M)$ as in the statement of the Theorem, then $\Lambda_{e^{-2\beta}g,e^{-\beta}a}=\Lambda_{g,a}$ by Lemma \ref{conf_invar},  giving a counter-example for the unique determination of $g$ and $a$ (up to diffeomorphism) in $\mathbb{U}_g$ from $\Lambda_{g,a}$. In this sense, our result is sharp. We refer to \cite{alinhac1995non} for the failure of UCP for an operator of the form $\square_g+b$. It is an open problem to prove UCP or to construct a counterexample to UCP for $\Box_g$.
\end{remark}

The structure of this paper is as follows. In Section \ref{boundary_determination}, we show the determination of the jet of the metric $g$ at the boundary $\partial M$. In Section \ref{interiordetermination}, we present the proof of the determination of the conformal class of the metric $g$ in the interior of the manifold. The nonlinear interaction of distorted plane waves is used for the proof. Due to the complications caused by possible reflections of waves at the boundary, we propose a ``scattering control" scheme to suppress the reflections. In Section \ref{gaussianbeam}, we use Gaussian beam solutions to prove the uniqueness of the coefficient $a$ and the conformal factor up to natural obstructions.

\section{Boundary determination}\label{boundary_determination}

In this section, we consider the determination of the jet of the metric on the boundary $\pa M$. This only uses the Dirichlet-to-Neumann map $\Lambda^{\rm lin}_g$ for the linearized equation
\begin{alignat}{2}
\Box_g u(x)&=0,&\quad&\text{on }M,\nonumber\\
\label{linear_eq}
 u(x)&=f(x),&\quad& \text{on }\partial M,\\
u(\ft,x')&=0, &\quad& \ft<0.\nonumber
\end{alignat}
Note that we can recover $\Lambda^{\rm lin}_g$ from $\Lambda_{g,a}$ (without any restrictions on $a$ except for smoothness) via
\[
\Lambda_g^{\rm lin}f=\frac{\pa}{\pa\epsilon}\bigl(\Lambda_{g,a}(\epsilon f)\bigr)\Big|_{\epsilon=0}.
\]
In particular, the assumptions of Theorem~\ref{ThmI} imply that $\Lambda_g^{\rm lin}=\Lambda_{\wt g}^{\rm lin}$.

\emph{The discussion in the present section works in $(1+n)$ dimensions for any $n\geq 1$, and we shall proceed in this generality.} We will recover $g$ in Taylor series at $(0,T)\times\pa N$ from $\Lambda_g^{\rm lin}$ in boundary normal coordinates. Here, we recall:

\begin{lemma}{$($\cite{petrov2016einstein}$)$}
\label{LemmaBdyNormal}
  Let $U\subset\pa M$ be an open precompact subset. Then there exist $\eps>0$, a neighborhood $V\subset M$ of $U$ in $M$, and a diffeomorphism $\Psi\colon(\pa M\cap V)\times[0,\eps)\to V$ such that
  \begin{enumerate}
  \item $\Psi(x',0)=x'$ for all $x'\in\pa M\cap V$;
  \item $\Psi(x',x^n)=\gamma_{x'}(x^n)$, where $\gamma_{x'}(x^n)$ is the unit speed geodesic (with respect to $g$) issued from $x'$ normal to $\pa M$.
  \end{enumerate}
  The pullback metric $\Psi^*g$ takes the form
  \begin{equation}
  \label{EqBdyNorm}
    \Psi^*g(x',x^n) = k(x^n,x',\dd x') + \dd x^n\otimes\dd x^n,
  \end{equation}
  where $k(x^n,x',\dd x')$ is a Lorentzian metric on $\pa M\cap V$ depending smoothly on $x^n\in[0,\eps)$. In particular, if $(x^0,\ldots, x^{n-1})$ are local boundary coordinates on $\partial M$, then in the coordinate system $(x^0,\ldots, x^n)$, the metric tensor $g$ takes the form
  \begin{equation}
  \label{EqSemigeod}
  g=\sum_{0\leq\alpha,\beta\leq n-1}g_{\alpha\beta}\mathrm{d}x^\alpha\otimes\mathrm{d}x^\beta+\mathrm{d}x^n\otimes\mathrm{d}x^n.
  \end{equation}
\end{lemma}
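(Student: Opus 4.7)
The construction is the standard boundary normal (Fermi) coordinate chart, adapted to a Lorentzian metric on a manifold with timelike boundary. Because $\pa M$ is timelike, its $g$-orthogonal complement at each point is spacelike, so the unit outer normal $\nu$ satisfies $g(\nu,\nu)=+1$, and the inward geodesics $\gamma_{x'}$ with $\gamma_{x'}(0)=x'$, $\dot\gamma_{x'}(0)=-\nu(x')$ are honest \emph{spacelike} geodesics parametrized by arclength. Their existence and smooth dependence on $x'$ on some uniform time interval over a precompact $U\subset\pa M$ follow from standard ODE theory.

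\textbf{Step 1: $\Psi$ is a local diffeomorphism.} Compute $d\Psi$ at $(x',0)$: it acts as the identity on $T_{x'}\pa M$ and sends $\pa_{x^n}$ to $-\nu(x')$. Since $\nu(x')\notin T_{x'}\pa M$, $d\Psi$ is a linear isomorphism onto $T_{x'}M$, and the inverse function theorem together with compactness of $\overline U$ gives $\eps>0$ and a neighborhood $V$ on which $\Psi$ is a diffeomorphism, verifying (1) and (2).

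\textbf{Step 2: form of $\Psi^*g$.} Along each $\gamma_{x'}$ the tangent vector $\pa_{x^n}$ is the unit-speed geodesic velocity, so $g(\pa_{x^n},\pa_{x^n})\equiv 1$. For the cross terms, apply the Gauss lemma: given a curve $c(t)$ in $\pa M\cap V$ with $c(0)=x'$, form the geodesic variation $\Gamma(t,s)=\gamma_{c(t)}(s)$. By the geodesic equation and symmetry of the Levi-Civita connection, $\nabla_s\pa_s\Gamma=0$ and $\nabla_s\pa_t\Gamma=\nabla_t\pa_s\Gamma$, whence
\[
  \tfrac{\pa}{\pa s} g(\pa_s\Gamma,\pa_t\Gamma) = g(\pa_s\Gamma,\nabla_t\pa_s\Gamma) = \tfrac12\pa_t g(\pa_s\Gamma,\pa_s\Gamma) = 0,
\]
the last equality holding because every $\gamma_{c(t)}$ is unit-speed. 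At $s=0$ the quantity equals $g(-\nu(c(t)),\dot c(t))=0$ since $\dot c(t)\in T\pa M\perp\nu$. Hence $g(\pa_{x^n},\pa_{x^\alpha})\equiv 0$ for $0\leq\alpha\leq n-1$, and the pullback metric is of the form \eqref{EqBdyNorm}, equivalently \eqref{EqSemigeod}.

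\textbf{Main obstacle.} There is essentially no obstacle of substance: the entire argument is classical and the Lorentzian signature enters only through the observation that $\pa M$ being timelike forces $\nu$ to be spacelike, so that the Riemannian proof transfers verbatim. The only point requiring a moment's care is choosing $\eps$ uniformly over $U$ so that the normal exponential map remains nonsingular, which is handled by precompactness of $U$ in $\pa M$.
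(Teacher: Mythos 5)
Your proof is correct and follows the same standard approach the paper invokes (which it states only in outline, deferring the Gauss-lemma computation to \cite{petrov2016einstein} and \cite[Lemma 2.3]{stefanov2018inverse}); you simply write out the Fermi-coordinate argument in full, including the key observation that a timelike boundary forces $\nu$ to be spacelike so the Riemannian computation transfers verbatim.
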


Indeed, we have $U\subset[a,b]\times\pa N$ for some $-\infty<a<b<\infty$; the existence of the unit speed geodesics $\gamma_{x'}(x^n)$ for $x^n\in[0,\eps')$ for some small $\eps'>0$ (depending on $a,b$) then follows from basic ODE theory, as does the fact that map $\Psi(x',x^n)=\gamma_{x'}(x^n)$ is a diffeomorphism for $x'\in U$ and $x^n\in[0,\eps)$ for sufficiently small $\eps\in(0,\eps']$. See also~\cite[Lemma 2.3]{stefanov2018inverse}.

We are interested in the special case $U=(0,T)\times\pa N$. We refer to the partial coordinate system $(x',x^n)$ near $(0,T)\times\pa N$, or the full local coordinate system $(x^0,\ldots,x^n)$, as \emph{boundary normal coordinates}. Assume we have two metrics $g,\widetilde{g}$ such that $\Lambda_g^{\rm lin}=\Lambda_{\widetilde{g}}^{\rm lin}$. Let $\Psi$ and $\widetilde{\Psi}$ be the diffeomorphisms as in Lemma \ref{LemmaBdyNormal} near $(-1,T+1)\times\partial N$ with respect to $g$ and $\widetilde{g}$. (Recall that we are identifying $\partial{N}$ and $\partial\widetilde{N}$.) Then $\widetilde{\Psi}\circ\Psi^{-1}$ is a diffeomorphism near $(0,T)\times\partial N$ fixing the boundary. Thus, $g$ and $(\widetilde{\Psi}\circ\Psi^{-1})^*\widetilde{g}$ have a common boundary normal coordinate system near $\partial M=\partial{\widetilde M}$. Without loss of generality, we shall thus assume that $g$ and $\widetilde{g}$ have the same boundary normal coordinates, i.e.\ they take the form~\eqref{EqSemigeod} (with a priori possibly different coefficients $g_{\alpha\beta}$, $\wt g_{\alpha\beta}$) in the \emph{same} (partial) coordinate system near $x_0\in\pa M=\pa\widetilde{M}$.

\begin{theorem}\label{bdrydetermination}
  If $\Lambda_g^{\rm lin}=\Lambda_{\widetilde{g}}^{\rm lin}$, then $g$ and $\widetilde{g}$ in boundary normal coordinates are equal in Taylor series at $(0,T)\times\partial N$.
\end{theorem}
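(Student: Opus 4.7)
The plan is a standard boundary determination argument via complex geometric optics. I work in boundary normal coordinates so that both $g$ and $\wt g$ take the form~\eqref{EqSemigeod}, with coefficients $k_{\alpha\beta}(x^n,x')$ and $\wt k_{\alpha\beta}(x^n,x')$ respectively. The goal is to prove, by induction on $j\geq 0$, that $\pa_n^j k_{\alpha\beta}|_{x^n=0} = \pa_n^j\wt k_{\alpha\beta}|_{x^n=0}$ on $(0,T)\times\pa N$.

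Fix a point $x_0\in(0,T)\times\pa N$ and a covector $\xi'\in T^*_{x_0}\pa M$ in the \emph{elliptic region}, i.e., $k^{\alpha\beta}(0,x_0)\xi'_\alpha\xi'_\beta>0$. I construct a formal WKB solution
\[
  u_\lambda(x) = e^{i\lambda\varphi(x)}\sum_{j=0}^N \lambda^{-j} a_j(x)
\]
of $\Box_g u=0$ in a one-sided neighborhood of $\pa M$, where $\varphi$ solves the eikonal equation $g^{ij}\pa_i\varphi\,\pa_j\varphi = 0$ with boundary data $\varphi(x',0)=\xi'\cdot(x'-x_0)$ and with the branch chosen so that $\mathrm{Im}\,\pa_n\varphi(x',0)>0$; this forces $|e^{i\lambda\varphi}|$ to decay exponentially as $x^n$ increases. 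The amplitudes $a_j$ are obtained by solving successive transport equations along the Hamilton flow of the eikonal symbol, with $a_0(x',0)=\chi(x')$ a smooth cutoff supported near $x_0$. The true solution of the linear IBVP~\eqref{linear_eq} with boundary data $f_\lambda := u_\lambda|_{\pa M}$ differs from $u_\lambda$ by a remainder whose $\pa_n$-trace on $\pa M$ is $O(\lambda^{-N-1})$, which follows from the elliptic nature of the construction in the $x^n$ direction combined with standard energy estimates; the vanishing condition $u|_{\ft<0}=0$ is automatic by finite speed of propagation once $\chi$ is localized near $x_0$.

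Applying $\Lambda_g^{\rm lin}$ yields an asymptotic expansion
\[
  \Lambda_g^{\rm lin} f_\lambda(x') = e^{i\lambda\varphi(x',0)}\Bigl(i\lambda\,\pa_n\varphi(x',0)\,a_0(x',0) + \sum_{j=0}^N \lambda^{-j} c_j(x')\Bigr) + O(\lambda^{-N-1}).
\]
The principal coefficient, together with the eikonal equation at $x^n=0$, yields $(\pa_n\varphi(x',0))^2 = -k^{\alpha\beta}(0,x')\xi'_\alpha\xi'_\beta$; varying $\xi'$ over the open elliptic cone recovers $k^{\alpha\beta}(0,x')$, and hence $k_{\alpha\beta}(0,x')$. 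This handles the base case $j=0$. For the inductive step, suppose the $(j-1)$-jet of $k$ at $\pa M$ agrees for $g$ and $\wt g$. Differentiating the eikonal equation $j$ times in $x^n$ and restricting to $x^n=0$ expresses $\pa_n^{j+1}\varphi|_{x^n=0}$ in terms of $\pa_n^j k^{\alpha\beta}|_{x^n=0}$ and strictly lower-order normal derivatives of $k^{\alpha\beta}$ and $\varphi$; analogously, the differentiated transport equations give the normal jet of $a_0,\ldots,a_j$. The coefficient $c_{j-1}$ (for $j\geq 1$) is then a universal polynomial expression in these jets, in which the previously undetermined quantity $\pa_n^j k_{\alpha\beta}|_{x^n=0}$ enters linearly. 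Polarizing in $\xi'$ over the elliptic cone solves for this jet. Matching the corresponding expansion for $\wt g$ against the identity $\Lambda_g^{\rm lin}=\Lambda_{\wt g}^{\rm lin}$ forces $c_{j-1}=\wt c_{j-1}$ and hence equality at order $j$, closing the induction.

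The main obstacle is the algebraic bookkeeping at each inductive step: verifying that $\pa_n^j k_{\alpha\beta}$ actually appears in $c_{j-1}$ with a coefficient that is invertible after polarizing in $\xi'$. This is a direct but tedious computation in the symbol calculus of $\Box_g$ in boundary normal coordinates, and it is the only non-routine ingredient; the complex-phase WKB parametrix in the elliptic region and the accompanying remainder estimates are entirely classical.
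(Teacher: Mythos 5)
You take a genuinely different route from the paper. The paper (adapting Stefanov--Uhlmann--Vasy) works with \emph{propagating} real-phase geometric optics: it picks $\xi'$ \emph{timelike}, so the eikonal equation has a real root $\partial_n\phi=-\sqrt{-g^{\alpha\beta}\xi_\alpha\xi_\beta}<0$ and the wave actually propagates into the domain, with the outgoing/causality condition selecting the correct branch. You instead work in the \emph{elliptic region} of the boundary symbol, with \emph{spacelike} $\xi'$ and complex-phase evanescent WKB solutions, $\mathrm{Im}\,\partial_n\varphi>0$. Both recover the full boundary symbol of $\Lambda_g^{\rm lin}$ and hence the normal jet of $g$, and both are standard devices for boundary determination; your evanescent approach has the minor structural advantage that there is nothing to reflect and the WKB ansatz extends globally up to an exponentially small cut-off error, whereas the paper must argue that its oscillatory construction represents the causal IBVP solution microlocally near $x_0$ before any reflected or glancing contributions enter.

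Two places should be tightened. First, in the inductive step you assert that $\partial_n^j k_{\alpha\beta}|_{x^n=0}$ ``enters $c_{j-1}$ linearly'' and that ``polarizing in $\xi'$'' solves for this jet. The symbol computation (carried out explicitly in the paper's proof for $j=1$) shows that what polarization delivers directly is not $\partial_n g^{\alpha\beta}$ but the combination $\frac{1}{\det g}\partial_n\bigl((\det g)\,g^{\alpha\beta}\bigr)$; one then needs the algebraic identity relating $h^{\alpha\beta}=(\det g)g^{\alpha\beta}$ back to $g^{\alpha\beta}$ via $\det h=(\det g)^{1-n}$ in order to isolate $\partial_n g^{\alpha\beta}$. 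You call this ``tedious bookkeeping,'' but the determinant step is precisely the point where invertibility of the linear system is non-obvious and deserves to be made explicit. Second, the remainder estimate: the interior $H^1$ bound for $u-u_\lambda$ does follow from the energy identity, but the $O(\lambda^{-N-1})$ bound on the \emph{Neumann trace} $\partial_n(u-u_\lambda)|_{\partial M}$ for a solution with zero Dirichlet data is a hidden-regularity estimate (Lasiecka--Lions--Triggiani type), not a direct consequence of the energy inequality; this should be cited rather than attributed to ``standard energy estimates.''
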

\begin{proof}
 The proof is adapted from the proof of Theorem 3.2 in \cite{stefanov2018inverse}. We work in a fixed boundary normal coordinate system $x=(x',x^n)$ in a neighborhood of a point $x_0\in(0,T)\times\partial N\subset\partial M$; denote the corresponding coordinates on $T^*M$ by $\xi=(\xi',\xi_n)$. Let $\xi^{0\prime}\in\R^{n}$ be such that $(\xi^{0\prime},0)\in T_{x_0}^*M$ is a future-pointing timelike covector (which in our special coordinates is thus dual to a tangent vector to $\pa M$), and denote by $\mathcal{U}\subset T^*(\partial M)$ a small neighborhood of $(x_0,\xi^{0\prime})$ so that for all  $(x',\xi')\in\mathcal{U}$, the covector $\xi'$ is future timelike. Let $\chi(x',\xi')$ be a smooth cutoff function with support in $\mathcal{U}$ and $\chi=1$ near $(x_0,\xi^{0\prime})$.
  
  For any fixed $\xi'\in\R^{n}$, consider then the Dirichlet data
  \[
  f(x')=e^{\mathrm{i}\lambda x'\cdot \xi'}\chi(x',\xi'),
  \]
  with $\lambda$ a large parameter. Then we construct (outgoing) geometric optics solutions near $x_0$ to the equation \eqref{linear_eq} of the form
  \[
  u(x)\sim e^{\mathrm{i}\lambda\phi(x,\xi')}\sum_{j=0}^\infty\lambda^{-j}a_j(x,\xi'),
  \]
  where the notation `$\sim$' means that the difference of $u$ and the truncated sum over $j=0,\ldots,J$ is bounded in $L^\infty$ by $\lambda^{-J-1}$.
  
  By the discussion in \cite{stefanov2018inverse}, we have the eikonal equation for the phase function
  \begin{equation}\label{eq_phase}
  g^{\alpha\beta}\partial_\alpha\phi\partial_\beta\phi+(\partial_n\phi)^2=0,\quad \phi\vert_{x^n=0}=x'\cdot\xi',
  \end{equation}
  and the transport equations for the amplitudes
  \begin{subequations}
  \begin{alignat}{2}
  Ta_0&=0,&\quad a_0\vert_{x^n=0}&=\chi\label{eq_a0}\\
  \mathrm{i}Ta_j&=-\Box_g a_{j-1},&\quad a_j\vert_{x^n=0}&=0,\quad j\geq 1,\label{eq_aj}
  \end{alignat}
  \end{subequations}
  where $T$ is the transport operator defined by
  \begin{equation}\label{transport_T}
  T a:=2g^{jk}\partial_j\phi\partial_k a+(\Box_g\phi)a = 2\pa_n\phi\pa_n a + 2 g^{\alpha\beta}\pa_\alpha\phi\pa_\beta a + (\Box_g\phi)a.
  \end{equation}
  Note that $\pa_n$ is the \emph{inward} pointing unit normal, so $\pa_n=-\pa_\nu$. Now, with the outgoing condition $\partial_n \phi\vert_{\partial M}<0$, the equation \eqref{eq_phase} implies that
  \begin{equation}\label{formula_xi_n}
  \partial_n\phi((x',0),\xi')=\xi_n(x',\xi'):=-\sqrt{-g^{\alpha\beta}(x',0)\xi_\alpha\xi_\beta}<0.
  \end{equation}
  
  In our boundary normal coordinates, we have
  \begin{equation}
  (\Lambda^{\rm lin}_gf)(x')\sim-e^{\mathrm{i}\lambda x'\cdot\xi'}\Big(\mathrm{i}\lambda\partial_n\phi(x',0,\xi')+\sum_{j=0}^\infty\lambda^{-j}\partial_na_j(x',0,\xi')\Big).
  \end{equation}
  The expression for$ \Lambda^{\rm lin}_{\widetilde{g}}f$ is similar, with $\phi$ and $a_j$ replaced by $\widetilde{\phi}$ and $\widetilde{a}_j$.
  Thus, under the assumption that $\Lambda_g^{\rm lin}f=\Lambda_{\widetilde{g}}^{\rm lin}f$, we have
  \begin{subequations}
  \begin{align}
  \partial_n\phi((x',0),\xi')&=\partial_n\widetilde{\phi}((x',0),\xi')\label{identity_phase}\\
  \partial_na_j((x',0),\xi')&=\partial_n\widetilde{a}_j((x',0),\xi'),\quad j=0,1,\ldots.\label{identity_amplitude}
  \end{align}
  \end{subequations}
  By \eqref{identity_phase} and \eqref{eq_phase}, we have (with $\xi_\alpha=\xi'_\alpha$ for $\alpha=0,\ldots,n-1$)
  \[
  g^{\alpha\beta}\xi_\alpha\xi_\beta=\widetilde{g}^{\alpha\beta}\xi_\alpha\xi_\beta,
  \]
  for $\xi'$ in an open subset of $\R^{n}$. This implies $g\vert_{\partial M}=\widetilde{g}\vert_{\partial M}$.
  
  Next, the transport equation \eqref{eq_a0} for $a_0$ gives the identity
  \begin{equation}\label{a_0_eq_ex}
  \begin{split}
    0 &= 2\partial_n\phi\partial_na_0+\frac{1}{\sqrt{-\det g}}\partial_n\bigl(\sqrt{-\det g}\partial_n\phi\bigr)+Q(g) \\
      &= 2\pa_n\phi\pa_n a_0 + \frac{1}{2\det g}(\partial_n\det g)\partial_n\phi+\partial^2_n\phi + Q(g)
  \end{split}
  \end{equation}
  on $\{x' \colon \chi(x',\xi')=1\}$ (which contains an open neighborhood of $x_0$), since $\partial_\beta a_0=\partial_\beta\chi=0$ there; here, $Q$ is given by
  \[
  Q(g)=\frac{1}{\sqrt{-\det g}}\partial_\alpha\bigl(\sqrt{-\det g}g^{\alpha\beta}\bigr)\xi_\beta
  \]
  and thus depends only on $g\vert_{\partial M}$, which is already determined. Differentiating \eqref{eq_phase} along $\pa_n$, we obtain
  \begin{equation}\label{pn_phi_identity}
  2\xi_n\partial^2_n\phi=-\partial_ng^{\alpha\beta}\xi_\alpha\xi_\beta+R(g,\phi,\partial_n\phi), \quad \text{on }\partial M,
  \end{equation}
  where $R(g,\phi,\pa_n\phi)$ depends on $g\vert_{\partial},\phi\vert_{\partial M},\pa_n\phi|_{\pa M}$ only. Using this to eliminate $\pa_n^2\phi$ in the identity \eqref{identity_amplitude} for $j=0$, multiplying by $2\xi_n=2\pa_n\phi((x',0),\xi')=2\pa_n\wt\phi((x',0),\xi')$, and using that $\pa_n\phi\pa_n a_0=\pa_n\wt\phi\pa_n\wt a_0$, we thus obtain
  \[
  \frac{\xi_n^2}{\det g}\partial_n\det g-\partial_ng^{\alpha\beta}\xi_\alpha\xi_\beta+\mathcal{R}(g,\phi,\partial_n\phi)=\frac{\xi_n^2}{\det \widetilde{g}}\partial_n\det \widetilde{g}-\partial_n\widetilde{g}^{\alpha\beta}\xi_\alpha\xi_\beta+\mathcal{R}(\widetilde{g},\widetilde{\phi},\partial_n\widetilde{\phi}),
  \]
  where $\mathcal{R}(g,\phi,\partial_n\phi)$ depends on $g\vert_{\partial M}$, $\phi\vert_{\partial M}$, $\partial_n\phi\vert_{\partial M}$ only. Since $g\vert_{\partial M}=\widetilde{g}\vert_{\partial M}$, $\phi\vert_{\partial M}=\widetilde{\phi}\vert_{\partial M}$, $\partial_n\phi\vert_{\partial M}=\partial_n\widetilde{\phi}\vert_{\partial M}$, we have $\cR(g,\phi,\pa_n\phi)=\cR(\wt g,\wt\phi,\pa_n\wt\phi)$. Using \eqref{formula_xi_n}, we thus get
  \[
  -\frac{1}{\det g}(\partial_n\det g)g^{\alpha\beta}\xi_\alpha\xi_\beta-\partial_ng^{\alpha\beta}\xi_\alpha\xi_\beta=-\frac{1}{\det \widetilde{g}}(\partial_n\det \widetilde{g})\widetilde{g}^{\alpha\beta}\xi_\alpha\xi_\beta-\partial_n\widetilde{g}^{\alpha\beta}\xi_\alpha\xi_\beta,
  \]
  or equivalently
  \begin{equation}
  \frac{1}{\det g}\partial_n((\det g) g^{\alpha\beta})=\frac{1}{\det \widetilde{g}}\partial_n((\det \widetilde{g}) \widetilde{g}^{\alpha\beta})\quad \text{on }\partial M.
  \end{equation}
  Now, $\det g=\det \widetilde{g}$ on $\partial M$, thus for $h^{\alpha\beta}:=(\det g)g^{\alpha\beta}=\wt h^{\alpha\beta}$ we have $\partial_nh^{\alpha\beta}=\partial_n\widetilde{h}^{\alpha\beta}$ on $\partial M$. Since $h_{\alpha\beta}=(\det g)^{-1}g_{\alpha\beta}$ with $\det h=(\det g)^{1-n}$, we thus have
  \[
    \partial_n g^{\alpha\beta}=\partial_n((\det h)^{\frac{1}{1-n}}h^{\alpha\beta})=\partial_n((\det\widetilde{h})^{\frac{1}{1-n}}\widetilde{h}^{\alpha\beta})=\partial_n \widetilde{g}^{\alpha\beta}.
  \]
  From the identity~\eqref{pn_phi_identity}, we can then also conclude that $\partial_n^2\phi=\partial_n^2\widetilde{\phi}$ on $\partial M$.
  
  Next, let us consider the transport equation \eqref{eq_aj} for $j=1$. Since $a_1=0$ on $\partial M$, we have
  \[
  2\mathrm{i}\partial_na_1=-\Box_ga_0=-\frac{1}{\sqrt{-\det g}}\partial_n\bigl(\sqrt{-\det g}\partial_n a_0\bigr)=-\partial^2_na_0+Q(g,\partial_ng).
  \]
  By differentiating \eqref{a_0_eq_ex} once and \eqref{eq_phase} twice, we have
  \[
  \begin{split}
  2\mathrm{i}\partial_na_1&=\frac{1}{4\det g}(\partial^2_n\det g)+\frac{1}{2\partial_n\phi}\partial^3_n\phi+\mathcal{R}(g,\partial_ng,\phi,\partial_n\phi,\partial^2_n\phi,\partial_na_0)\\
  &=\frac{1}{4\det g}(\partial^2_n\det g)-\frac{1}{4(\partial_n\phi)^2}\partial^2_ng^{\alpha\beta}\partial_\alpha\phi\partial_\beta\phi+\mathcal{R}(g,\partial_ng,\phi,\partial_n\phi,\partial^2_n\phi,\partial_na_0)\\
  &=-\frac{1}{4}\frac{1}{\det g}\partial^2_n((\det g)g^{\alpha\beta})\xi_\alpha\xi_\beta+\mathcal{R}(g,\partial_ng,\phi,\partial_n\phi,\partial^2_n\phi,\partial_n a_0);
  \end{split}
  \]
  here, the function $\mathcal{R}$ may vary from line to line, but it always only depends on $g$, $\partial_n g$, $\phi$, $\partial_n\phi$, $\partial^2_n\phi$, $\partial_n a_0$ restricted on $\partial M$. Therefore, the identity \eqref{identity_amplitude} for $j=1$ implies that
  \[
  \frac{1}{\det g}\partial^2_n((\det g) g^{\alpha\beta})=\frac{1}{\det \widetilde{g}}\partial^2_n((\det \widetilde{g}) \widetilde{g}^{\alpha\beta})\quad \text{on }\partial M.
  \]
  and consequently $\partial^2_ng=\partial^2_n\widetilde{g}$ on $\partial M$. Repeating this process, we can similarly establish $\partial_n^mg=\partial_n^m\widetilde{g}$ on $\partial M$ for $m\geq 3,4,\ldots$. The proof is complete.
\end{proof}

In terms of the form~\eqref{EqBdyNorm} of the metric in partial boundary normal coordinates, we have thus recovered the jet of $k\in\CI\bigl([0,\eps)_{x^n};\CI((0,T)\times\pa N;S^2 T^*((0,T)\times\pa N))\bigr)$ at $x^n=0$. Extending $k$ in an arbitrary smooth fashion to $x^n\in(-2\eps,\eps)$ (and reducing $\eps$ further if necessary) thus furnishes us with an extension of $g$ to a metric on a slightly larger open manifold
\begin{equation}
\label{EqMfdLarge}
  M_1:=(0,T)\times N_1,\qquad N\subset N_1^{\rm int};
\end{equation}
we denote the extended metric by the same letter $g$, and we may assume that the first coordinate $\ft$ is a time function on $M_1\setminus M^{\rm int}$, i.e.\ its differential is timelike. In the notation used before the statement of Theorem~\ref{bdrydetermination}, the a priori discontinuous Lorentzian metric defined as $(\wt\Psi\circ\Psi^{-1})^*\wt g$ on $M$ and $g$ on $M_1\setminus M$ is, in fact, smooth as a consequence of Theorem~\ref{bdrydetermination}. We define
\begin{equation}
\label{EqSleeve}
  \cU_1 := (0,T) \times (N_1 \setminus N),
\end{equation}
which thus is an open set which completely surrounds $(0,T)\times\pa N$. See Figure~\ref{FigSleeve}.

\begin{figure}[!ht]
\centering
\includegraphics{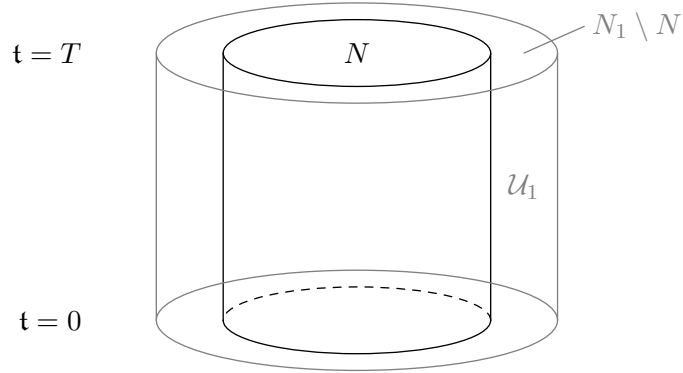}
\caption{The enlarged manifold $(0,T)\times N_1$, the original manifold $(0,T)\times N$, and the `sleeve' $\cU_1$ surrounding $(0,T)\times N$.}
\label{FigSleeve}
\end{figure}

\section{Interior determination}\label{interiordetermination}

In this section, we prove Theorem~\ref{ThmI}, except for the characterization of conformal factors $\beta$, in a sequence of steps.
\begin{enumerate}
\item In~\S\ref{SsGeo}, we describe null-geometric aspects of the extended manifold, and in~\S\ref{SsDist} we construct distorted plane waves.
\item In~\S\ref{SsScat}, we use a scattering control method to suppress reflections of distorted plane waves at $\pa M$.
\item In~\S\ref{SsInter}, we give a description of the strongest singularity produced by the nonlinear interaction of four distorted plane waves.
\item In~\S\ref{SsConf}, we use thus produced artificial point sources in $M^{\rm int}$ to reconstruct the conformal structure of the set $\mathbb{U}_g$ defined in~\eqref{EqDiamond}.
\item In~\S\ref{SsFact}, we use principal symbol arguments to determine the conformal class of $(g,a)$.
\end{enumerate}

We work with $M_1$ and $\cU_1$ defined in~\eqref{EqMfdLarge}--\eqref{EqSleeve}. For $p\in M_1$, denote the set of light-like tangent vectors at $p$ by
\[
L_p M_1=\{\zeta\in T_p M_1\setminus\{0\}\colon g(\zeta,\zeta)=0\}.
\]
The set of light-like covectors at $p$ is denoted by $L^{*}_p M_1$. The sets of future and past light-like vectors (covectors) are denoted by $L_p^+ M_1$ and $L_p^- M_1$ ($L^{*,+}_p M_1$ and $L^{*,-}_p M_1$).
Define the future directed light-cone emanating from $p$ by
\[
  \mathcal{L}^+(p)=\{\gamma_{p,\zeta}(t)\in  M_1\colon\zeta\in L^+_p M_1,\ t\geq 0\}\subset  M_1,
\]
where $\gamma_{p,\zeta}(t)$ is the null-geodesic with $\gamma_{p,\zeta}(0)=p$, $\dot\gamma_{p,\zeta}(0)=\zeta$; the parameter $t$ lies in the largest connected open interval $(t^-_{p,\zeta},t^+_{p,\zeta})$ containing $0$ for which $\gamma_{p,\zeta}$ is defined.

\subsection{Geometry of the extended manifold}
\label{SsGeo}

We will construct distorted plane waves whose singular support is concentrated near a suitably cut off geodesic inside of $M_1$. Since we are only assuming control on smooth geodesics inside $M$, we cannot expect to have good geometric properties of null-geodesics once they leave $M$ and enter $M_1$; for instance, they could turn back and reenter $M$, which could lead to conjugate points and other complications. Thus, we introduce a number of notions designed to capture those null-geodesics which enter and exit (and then possibly again enter) $M$, and the times at which they do so. We begin by defining the set
\[
  \wt E_{\rm i} := \{ (z_0,\zeta_0) \colon z_0 \in M_1\setminus M^{\rm int},\ \zeta_0\in L^+_{z_0}M_1,\ \exists\,t\in(0,t^+_{z_0,\zeta_0})\ \text{s.t.}\ \gamma_{z_0,\zeta_0}(t)\in(0,T)\times\pa N \}
\]
containing all initial data of null-geodesics $\gamma_{z_0,\zeta_0}$ which intersect the boundary $(0,T)\times\pa N$ for \emph{positive} affine parameter values. For $(z_0,\zeta_0)\in \wt E_{\rm i}$, we record with
\begin{equation}
\label{EqT0}
  \bft_0(z_0,\zeta_0) := \inf \{ t\in(0,t^+_{z_0,\zeta_0}) \colon \gamma_{z_0,\zeta_0}(t)\in(0,T)\times\pa N \} \in (0,t^+_{z_0,\zeta_0})
\end{equation}
the first positive time of contact with $(0,T)\times\pa N$. (Note that for $(z_0,\zeta_0)\in\wt E_{\rm i}$ with $z_0\in(0,T)\times\pa N$ and $\zeta_0$ inward pointing, $\bft_0(z_0,\zeta_0)$ measures the time at which $\gamma_{z_0,\zeta_0}$ first \emph{exits} $(0,T)\times N$.) We next discard those $(z_0,\zeta_0)$ for which $\gamma_{z_0,\zeta_0}$ has a tangential first contact with $(0,T)\times\pa N$; this leaves us with the set
\[
  E_{\rm i} := \{ (z_0,\zeta_0)\in \wt E_{\rm i} \colon \dot\gamma_{z_0,\zeta_0}(\bft_0(z_0,\zeta_0)) \notin T(\pa M) \},
\]
the subscript `i' standing for `in'. We remark that the set $E_{\rm i}^{\rm int}$ containing initial data \emph{outside} $M$,
\begin{equation}
\label{EqI}
  E_{\rm i}^{\rm int}:= E_{\rm i}\cap L^+(M_1\setminus M),
\end{equation}
is determined entirely by the restriction of $g$ to $M_1\setminus M^{\rm int}$. Note then that for $(z_0,\zeta_0)\in E_{\rm i}^{\rm int}$, the tangent vector $\dot\gamma_{z_0,\zeta_0}(\bft_0(z_0,\zeta_0))$ is necessarily pointing into $M^{\rm int}$. Thus, one can inquire whether $\gamma_{z_0,\zeta_0}$ leaves $M^{\rm int}$ again, which is equivalent to membership in $E_{\rm i}$ of the position and velocity when striking $(0,T)\times\pa N$; we thus set
\begin{equation}
\label{EqIO}
  E_{\rm io}^{\rm int} := \{ (z_0,\zeta_0) \in E_{\rm i}^{\rm int} \colon (\gamma_{z_0,\zeta_0}(t),\dot\gamma_{z_0,\zeta_0}(t))|_{t=\bft_0(z_0,\zeta_0)}\in E_{\rm i} \},
\end{equation}
with `io' standing for `in, out'; and for $(z_0,\zeta_0)\in E_{\rm io}^{\rm int}$, we record the exit time after the first entry as
\begin{equation}
\label{EqT1}
  \bft_1(z_0,\zeta_0) := \bft_0(z_0,\zeta_0) + \bft_0(\gamma_{z_0,\zeta_0}(t),\dot\gamma_{z_0,\zeta_0}(t))|_{t=\bft_0(z_0,\zeta_0)} \in (\mathbf{t_0}(z_0,\zeta_0),t^+_{z_0,\zeta_0}).
\end{equation}
Continuing, the null-geodesic $\gamma_{z_0,\zeta_0}(t)$ may, after exiting, reenter $(0,T)\times N$ yet again; since we do not want to keep track of effects \emph{inside} of $M$ which are due to poorly controlled geometry \emph{outside} of $M$, we will in our constructions below stop singularities of waves prior to reentering. For now, we merely define
\begin{equation}
\label{EqIOIT2}
\begin{split}
  E_{\rm ioi}^{\rm int} &:= \{ (z_0,\zeta_0) \in E_{\rm io}^{\rm int} \colon (\gamma_{z_0,\zeta_0}(t),\dot\gamma_{z_0,\zeta_0}(t))|_{t=\bft_1(z_0,\zeta_0)}\in E_{\rm i} \}, \\
  \bft_2(z_0,\zeta_0) &:= \bft_1(z_0,\zeta_0) + \bft_0(\gamma_{z_0,\zeta_0}(t),\dot\gamma_{z_0,\zeta_0}(t))|_{t=\bft_1(z_0,\zeta_0)} \in (\mathbf{t_0}(z_0,\zeta_0),t^+_{z_0,\zeta_0}),\quad (z_0,\zeta_0)\in E_{\rm ioi}^{\rm int}.
\end{split}
\end{equation}

In summary, we record the first entrance and exit time of $\gamma_{z_0,\zeta_0}$ into and out of $(0,T)\times N$, if they exist, and the first reentry time if applicable. See Figure~\ref{FigT}.

\begin{figure}[!ht]
\centering
\includegraphics{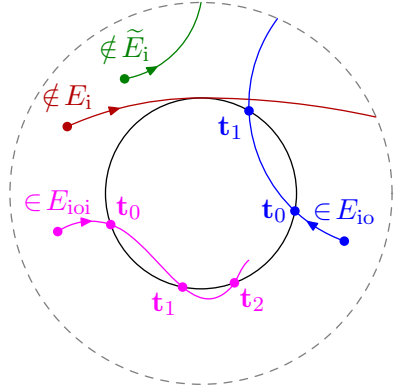}
\caption{Illustration of the definitions~\eqref{EqT0}--\eqref{EqIOIT2}, projected to the spatial mani\-fold $N_1$. The boundary of $N$ is the inner, solid black circle; the boundary of $N_1$ is the gray dashed outer circle.}
\label{FigT}
\end{figure}

We shall see in~\S\ref{SsScat} how to constructively determine the sets $E_{\rm io}^{\rm int}$ and $E_{\rm ioi}^{\rm int}$ from the data of the inverse problem.

\subsection{Construction of distorted plane waves}
\label{SsDist}

Distorted plane waves have singularities conormal to a submanifold of $M_1$ and can be viewed as Lagrangian distributions. We review the concept of Lagrangian distributions briefly, closely following the notation used in \cite{lassas2018inverse}. Recall that $T^*M_1$ is a symplectic manifold with canonical 2-form, given in local coordinates by $\omega=\sum_{j=1}^4\mathrm{d}\xi_j\wedge\mathrm{d}x^j$. A submanifold $\Lambda\subset T^*M_1$ is called Lagrangian if $n:=\mathrm{dim}\, \Lambda=4$ and $\omega$ vanishes on $\Lambda$. For $K$ a smooth submanifold of $M_1$, its conormal bundle
\[
N^*K=\{(x,\zeta)\in T^*M_1: x\in K,\, \langle \zeta,\theta\rangle=0, \,\theta\in T_xK\}
\]
is a Lagrangian submanifold of $T^*M_1$. Let $\Lambda$ be a smooth conic Lagrangian submanifold of $T^*\widetilde M_1\setminus 0$. We denote by $\mathcal{I}^\mu(\Lambda)$ the space of Lagrangian distributions of order $\mu$ associated with $\Lambda$. If $\Lambda=N^*K$ for some submanifold $K\subset M_1$, then $\mathcal{I}^\mu(K):=\mathcal{I}^\mu(N^*K)$ denotes the space of conormal distributions to $K$. For $u\in\mathcal{I}^\mu(\Lambda)$, one can define the principal symbol $\sigma_p(u)$ of $u$ with
\[
\sigma_p(u)\in S^{\mu+\frac{n}{4}}(\Lambda,\Omega^{1/2}\otimes L)/S^{\mu+\frac{n}{4}-1}(\Lambda,\Omega^{1/2}\otimes L),
\]
where $\Omega^{1/2}$ is the half-density on $M_1$ and $L$ is the Keller--Maslov line bundle of $\Lambda$. We refer to \cite[Chapter 4]{duistermaat1996fourier} for the precise definition and more discussions.

Fix a Riemannian metric $g^+$ on $M_1$. For $z_0\in M_1\setminus M$, $\zeta_0\in L_{z_0}^+M_1$, and $s_0>0$, denote by
\[
\mathcal{W}(z_0,\zeta_0,a_0)=\{\eta\in L_{z_0}^+M_1\colon \|\eta-\zeta_0\|_{g^+}<a_0 \}
\]
a neighborhood of $\zeta_0$ inside the future light cone. Let
\begin{align*}
Y(z_0,\zeta_0,a_0,s_0)&=\{\gamma_{z_0,\eta}(s_0)\in M_1 \colon \eta\in\mathcal{W}(z_0,\zeta_0,a_0) \},\\
K(z_0,\zeta_0,a_0,s_0,\bar t)&=\{\gamma_{z_0,\eta}(t)\in M_1 \colon \eta\in \mathcal{W}(z_0,\zeta_0,a_0),\ t\in(s_0,\bar t)\}, \\
\Lambda(z_0,\zeta_0,a_0,s_0,\bar t)&=\{(\gamma_{z_0,\eta}(t),r\dot{\gamma}_{z_0,\eta}(t)^\flat)\in T^*M_1\colon \eta\in \mathcal{W}(z_0,\zeta_0,a_0),\ t\in (s_0,\bar t),\ r\in\R\},
\end{align*}
and
\[
  K(z_0,\zeta_0,a_0,s_0)=K(z_0,\zeta_0,a_0,s_0,t^+_{z_0,\eta}),\qquad
  \Lambda(z_0,\zeta_0,a_0,s_0)=\Lambda(z_0,\zeta_0,a_0,s_0,t^+_{z_0,\eta}).
\]
Here, we use the parameter $a_0>0$ (which will always be chosen to be very small) to control the aperture of the congruence of null-geodesics emanating from $z_0$, while $s_0>0$ (also small) fixes the offset $t=s_0$ at which we start recording the null-geodesic. The set $\Lambda(z_0,\zeta_0,a_0,s_0)$ is the Lagrangian manifold that is the flowout from $L^{*,+}M_1\cap N^*Y(z_0,\zeta_0,a_0,s_0)$ under the Hamiltonian vector field $H_p$ associated with $p(x,\xi)=g^{jk}(x)\xi_j\xi_k$. Moreover, $K(z_0,\zeta_0,a_0,s_0)$ is the projection $\pi\Lambda(z_0,\zeta_0,a_0,s_0)$ to the base; until the first conjugate point of $\gamma_{z_0,\eta}$, it is a smooth null hypersurface with conormal bundle equal to $\Lambda(z_0,\zeta_0,a_0,s_0)$.

Let now $(z_0,\zeta_0)\in E_{\rm i}^{\rm int}$, and consider a source
\begin{equation}
\label{Eqh0}
  h_0\in\mathcal{I}^{\mu+3/2}(M_1;N^*Y(z_0,\zeta_0,a_0,s_0))
\end{equation}
conormal to $Y$. We wish to define a forward solution of the equation $\Box_g u_0=h_0$ adapted to our needs. Roughly speaking, we want $u_0$ to be an approximate solution, $\Box_g u_0-h_0\in\CI$, until shortly before the time when $\gamma_{z_0,\zeta_0}$ reenters $(0,T)\times N$ after exiting for the first time; the plan is to manufacture $u_0$ to be singular in a small neighborhood of a \emph{single} null-geodesic segment.

To accomplish this, note that the first intersection of $\gamma_{z_0,\zeta_0}$ with $(0,T)\times\pa N$ is transversal; this is an open condition in the initial conditions of the null-geodesics, hence we conclude that $(z_0,\eta)\in E_{\rm i}^{\rm int}$ for all $\eta\in\cW(z_0,\zeta_0,2 a_0)$ provided $a_0$ is sufficiently small. There are now three cases to consider:
\begin{enumerate}
\item\label{ItInotO} there exists $s_0>0$ so that $\cW(z_0,\zeta_0,2 a_0)\cap E_{\rm io}^{\rm int}=\emptyset$; that is, the curves $\gamma_{z_0,\eta}$ issuing from $z_0$ enter $(0,T)\times N$ but do not exit before time $\ft=T$. Or:
\item\label{ItIO} one has $(z_0,\zeta_0)\in E_{\rm io}^{\rm int}$. The same then holds true for all $\eta\in\cW(z_0,\zeta_0,2 a_0)$ when $s_0>0$ is sufficiently small; that is, the curves $\gamma_{z_0,\eta}$ enter and exit $(0,T)\times N$ (and might return, to be discussed momentarily). Or:
\item\label{ItIOnotI} $(z_0,\zeta_0)\notin E_{\rm io}^{\rm int}$, but for all $s_0>0$ we have $\cW(z_0,\zeta_0,2 a_0)\cap E_{\rm io}^{\rm int}\neq\emptyset$. This happens if $\gamma_{z_0,\zeta_0}$ exits $(0,T)\times N$ precisely at $\ft=T$. The null convexity assumption on $\pa M$ implies that $\gamma_{z_0,\zeta_0}$ exits transversally and thus does not return for a positive amount of time; but this means that we can choose $s_0>0$ so small that $\cW(z_0,\zeta_0,2 a_0)\cap E_{\rm ioi}^{\rm int}=\emptyset$.
\end{enumerate}

In cases \eqref{ItInotO} and \eqref{ItIOnotI}, we simply take $u_0:=\cQ_g h_0$, where $\cQ_g$ is a parametrix for $\Box_g$ microlocally near $\Lambda(z_0,\zeta_0,2 a_0,s_0)\times N^*Y(z_0,\zeta_0,2 a_0,s_0)$; more precisely, $\cQ_g$ is a distinguished parametrix in the terminology of \cite{DuistermaatHormanderFIO2} corresponding to propagation of singularities in the forward direction in time.\footnote{Alternatively, one can use a doubling construction to create a closed extension $N_1$ of $N$ so that $((0,T)\times N_1,g)$ is globally hyperbolic, and take $\cQ_g$ to be the forward fundamental solution.} By \cite{melrose1979lagrangian} we have $\cQ_g\in \mathcal{I}^{-3/2}(\Lambda_g\setminus N^*\mathrm{Diag})$, where $\mathrm{Diag}=\{(x,y)\in M_1\times M_1,\, x=y\}$, and $\Lambda_g\subset T^*M_1\times T^*M_1$ is given by
\[
\Lambda_g=\{(x,\xi,y,-\eta);\, g^{jk}(x)\xi_j\xi_k=0,\,(y,\eta)\in\Theta_{x,\xi} \},
\]
where $\Theta_{x,\xi}\subset T^*M_1$ is the bicharacteristic of $\square_g$ containing $(x,\xi)$.
 Thus, in this case,
\begin{equation}
\label{EqQ1}
  u_0=\mathcal{Q}_g(h_0),\quad u_0|_{M_1\setminus Y}\in \mathcal{I}^{\mu}(M_1\setminus Y;\, \Lambda(z_0,\zeta_0,a_0,s_0))
\end{equation}
is a conormal distribution whose singularities propagate along $\Lambda(z_0,\zeta_0,a_0,s_0)$ (cf. \cite[Lemma 3.1]{kurylev2018inverse}). We note that $u_0$ restricted to $(0,T)\times N$ satisfies $\Box_g u_0\in\CI((0,T)\times N)$.

It remains to consider case~\eqref{ItIO}. Here we have $\bft_1(z_0,\eta)<t^+_{z_0,\eta}$ for all $\eta\in\cW(z_0,\zeta_0,2 a_0)$. Due to the lower semicontinuity of the maximal existence time for ODEs, we may choose $\delta>0$ and $a_0>0$ so that
\[
  \bft_1(z_0,\eta) < \bft_1(z_0,\zeta_0)+\delta,\qquad \eta\in\cW(z_0,\zeta_0,2 a_0).
\]
In addition, in the case that $(z_0,\zeta_0)\in E_{\rm ioi}^{\rm int}$, i.e.\ the reentry time $\bft_2(z_0,\zeta_0)$ (which is lower semicontinuous) is well-defined, we may shrink $a_0>0$ and choose $\delta>0$ so small that
\[
  \bft_1(z_0,\eta) < \bft_1(z_0,\zeta_0)+\delta < \bft_1(z_0,\zeta_0)+3\delta < \bft_2(z_0,\eta),\qquad \eta\in\cW(z_0,\zeta_0,2 a_0).
\]
Let $\cQ'_g$ denote a microlocal parametrix for the forward problem for $\Box_g$ near
\[
  \Lambda(z_0,\zeta_0,2 a_0,s_0,\bft_1(z_0,\zeta_0)+3\delta)\times N^*Y(z_0,\zeta_0,2 s_0)
\]
so that $\Box_g(\cQ'_g h_0)-h_0$ is smooth near $\pi(\Lambda(z_0,\zeta_0,2 a_0,s_0,\bft_1(z_0,\zeta_0)+3\delta))$, where $\pi\colon T^*M_1\to M_1$ is the projection map. In order to avoid the reentry of singularities, we now choose a cutoff function $\chi\in\CIc(M_1)$ with
\begin{align*}
  \supp\chi \subset\ &\pi\bigl(\Lambda(z_0,\zeta_0,2 a_0,s_0/2,\bft_1(z_0,\zeta_0)+3\delta)\bigr), \\
  \chi\equiv 1\text{ on } &\pi\bigl(\Lambda(z_0,\zeta_0,a_0,s_0,\bft_1(z_0,\zeta_0)+2\delta)\setminus\Lambda(z_0,\zeta_0,a_0,s_0/2,s_0)\bigr),
\end{align*}
and put
\begin{equation}
\label{EqQ2}
  \cQ_g(h_0) := \chi\cQ'_g(h_0).
\end{equation}
This solves $\Box_g(\cQ_g(h_0))-h_0-[\Box_g,\chi]\cQ'_g(h_0)\in\CI(M_1)$, which is smooth in $(0,T)\times N$; moreover, $\cQ_g(h_0)$ itself is conormal of order $\mu$ along $K(z_0,\zeta_0,s_0)\cap M$ with principal symbol over $M$ equal to the principal symbol of $\cQ'_g$ times that of $h_0$ (see~\S\ref{SsInter} for details).

\begin{remark}
  The operator $\cQ_g$ in~\eqref{EqQ1} and \eqref{EqQ2} depends on various choices which are not recorded in the notation. However, in our application, only simple microlocal properties of $\cQ_g(h_0)$ matter, and these are independent of choices. Alternatively, one can partition phase space $T^*M_1$ into different regions which roughly correspond to the separation into the cases~\eqref{ItInotO}--\eqref{ItIOnotI}, and define a single operator $\cQ_g$ using a partition of unity; we leave the details to the reader.
\end{remark}

\subsection{Scattering control}
\label{SsScat}

Fix now $(z_0,\zeta_0)\in E_{\rm i}^{\rm int}$, $s_0>0$; let $h_0$ be as in~\eqref{Eqh0} and pick it to be a classical conormal distribution with connected wave front set, and define $u_0=\cQ_g h_0$ by~\eqref{EqQ1} or \eqref{EqQ2}, as appropriate. Let us assume that either $(z_0,\zeta_0)\notin E_{\rm io}^{\rm int}$, or otherwise that $\gamma_{z_0,\zeta_0}(t)$ has no conjugate point for $t\in[0,\bft_1(z_0,\zeta_0)+\delta]$ for some $\delta>0$; we discuss this condition, and how to arrange it, after the proof of Proposition~\ref{PropScat} below.

Define now
\begin{align*}
\cU(z_0,\zeta_0,a_0)&=\{\gamma_{z_0,\eta}(\bft_0(z_0,\eta)),\dot\gamma_{z_0,\eta}(\bft_0(z_0,\eta)) \colon \eta\in\mathcal{W}(z_0,\zeta_0,a_0)\}, \\
U(z_0,\zeta_0,a_0)&=\pi(\cU(z_0,\zeta_0,a_0))\subset\pa M,
\end{align*}
and denote by
\[
  \wt U(z_0,\zeta_0,a_0)\subset\pa M
\]
the open $a_0$-neighborhood of $U(z_0,\zeta_0,a_0)$ with respect to the Riemannian metric $g^+$. Note that the singularities of the restriction $u_0|_{\wt U(z_0,\zeta_0,2 a_0)}$ to the slightly larger set $\wt U(z_0,\zeta_0,2 a_0)$ are in fact contained in the smaller set $\wt U(z_0,\zeta_0,a_0)$; thus, we can choose $\wh f_0$ which has compact support in $\wt U(z_0,\zeta_0,2 a_0)$ and so that $\wh f_0-u_0|_{\wt U}$ is smooth in $\wt U(z_0,\zeta_0,2 a_0)$. Extend $\wh f_0$ by $0$ to $\pa M\setminus\wt U(z_0,\zeta_0,2 a_0)$. Importantly, since the metric $g$ on $M_1\setminus M$ is already known, we can use the value of $u_0$ on $M_1\setminus M$ to determine the symbol of $\wh f_0$ on $(0,T)\times\partial N$ by continuity. Denote by $\wh v_0$ to be the solution to the initial-boundary value problem
\begin{alignat}{2}
  \Box_g \wh v_0(x)&=0&\quad&\text{on }M,\nonumber\\
\label{linear_eq_v0}
  \wh v_0(x)&=\wh f_0(x)&\quad& \text{on }\partial M,\\
  \wh v_0(\ft,x')&=0, &\quad& \ft<0.\nonumber
\end{alignat}

We note that the singularities of the solution $\wh v_0$ might encounter several reflections at $\partial M$. However, before the first reflection we have $\wh v_0=u_0$ mod $C^\infty$. The goal of this section is the construction of additional boundary sources that remove multiple reflections. In order to be useful for our inverse problem, this needs to be accomplished based on the knowledge of $\Lambda_g^{\rm lin}$, but without knowing the metric $g$ itself inside $M$. The procedure is close in spirit to the scattering control introduced in \cite{caday2019scattering}.

\begin{prop}
\label{PropScat}
  Let $u_0$ and $\wh f_0$, depending on the choice of the aperture parameter $a_0>0$, be as above. There exists a constructive procedure, which only uses the Dirichlet-to-Neumann map $\Lambda_g^{\rm lin}$ and the knowledge of $g$ on $M_1\setminus M^{\rm int}$, for the determination of a value $a_0>0$ and the construction of a boundary source $\wh f_1$ on $(0,T)\times\pa N$ with the property that the solution of the equation
  \begin{alignat}{2}
    \Box_g v_0(x)&=0&\quad&\text{on }M,\nonumber\\
  \label{linear_eq_u0}
    v_0(x)&=f_0(x):=\wh f_0(x)+\wh f_1(x)&\quad& \text{on }\partial M,\\
    v_0(\ft,x')&=0, &\quad& \ft<0,\nonumber
  \end{alignat}
  satisfies $v_0-u_0\in\CI((0,T)\times N)$. That is, $v_0$ does not undergo any reflection at $\partial M$.
\end{prop}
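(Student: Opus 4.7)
My plan is to construct $\wh f_1$ iteratively, canceling one boundary reflection at a time. Null-convexity of $\pa M$ (together with a lower bound on the time between consecutive reflections inferred from the known metric on the sleeve $\cU_1$ and the relevant compact subsets of $\Lambda(z_0,\zeta_0,a_0,s_0)$) guarantees that any singularity launched by $\wh f_0$ undergoes at most finitely many reflections inside $(0,T)\times N$; call this bound $K$. Choosing $a_0>0$ sufficiently small ensures that the successive reflection-point neighborhoods on $\pa M$ are pairwise disjoint and disjoint from $\wt U(z_0,\zeta_0,2 a_0)$; the correct $a_0$ can be selected constructively by computing the predicted reflection times/positions using only $g$ on $M_1\setminus M^{\rm int}$ together with the singular support of $\Lambda_g^{\rm lin}\wh f_0$. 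I will produce $\wh f_1=\sum_{k=1}^{K}\wh f_1^{(k)}$, each $\wh f_1^{(k)}$ supported near the $k$-th reflection point.

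For the first step, let $p_1=\gamma_{z_0,\zeta_0}(\bft_1(z_0,\zeta_0))$ be the first exit point and work in boundary normal coordinates near $p_1$. Modulo $C^\infty$, $\wh v_0$ near $p_1$ decomposes as $u_0+R_1$, where $R_1$ is the reflected wave produced by the Dirichlet condition; since $\wh f_0\equiv 0$ near $p_1$, one has $R_1|_{\pa M}=-u_0|_{\pa M}$. The standard microlocal analysis of reflection at a Dirichlet timelike boundary (the reflected phase has opposite normal component, while the Dirichlet condition flips the sign of the leading symbol) yields $\pa_\nu R_1|_{\pa M}=+\pa_\nu u_0|_{\pa M}$ mod $C^\infty$, hence
\[
  \Lambda_g^{\rm lin}\wh f_0 = \pa_\nu \wh v_0|_{\pa M} = 2\pa_\nu u_0|_{\pa M}\pmod{C^\infty}\quad \text{near }p_1.
\]
Since the jet of $g$ at $\pa M$ has been recovered in Theorem~\ref{bdrydetermination}, the eikonal equation \eqref{eq_phase}, in the form \eqref{formula_xi_n}, determines $\xi_n$ as a function of the tangential covector, so dividing by $2\mathrm{i}\lambda\xi_n$ turns the full symbol expansion of $\pa_\nu u_0|_{\pa M}$ into the full symbol expansion of $u_0|_{\pa M}$ near $p_1$. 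A Borel summation produces a conormal distribution, call it $\wh f_1^{(1)}$, supported in a small neighborhood of $p_1$, which equals $u_0|_{\pa M}$ there modulo $C^\infty$. The resulting solution with data $\wh f_0+\wh f_1^{(1)}$ then has boundary trace equal to $u_0|_{\pa M}$ modulo $C^\infty$ near both $p_0$ and $p_1$; uniqueness of the linear IBVP in $M$ (with $u_0|_M$ as reference) yields that $v^{(1)}-u_0$ is smooth along the entry-to-exit segment and its first reflection.

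For the subsequent steps, I observe that $\wh f_1^{(1)}$ itself launches a new singularity into $M$ (the cancellation of $R_1$ happens only because this new wave and $R_1$ carry opposite principal symbols), and this new wave may reflect at a later point $p_2\in(0,T)\times\pa N$. I repeat the construction: read off $\pa_\nu v^{(1)}|_{\pa M}$ from $\Lambda_g^{\rm lin}(\wh f_0+\wh f_1^{(1)})$ near $p_2$, subtract the contribution from the already-known part of the solution to isolate the new incident wave, reconstruct its Dirichlet trace via the same symbol relation, and define $\wh f_1^{(2)}$. After at most $K$ steps the process terminates and $\wh f_1:=\sum_k\wh f_1^{(k)}$ has the required property.

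The main obstacle is twofold. First, at each step one must rigorously justify that the DN data near the current reflection point contains \emph{only} the contribution from the current incident/reflected pair, cleanly separated from singularities carried by previous corrections or by geometric-optics contributions traveling along other null-geodesics; this is why the smallness of $a_0$ and the absence of conjugate points in $\mathbb{U}_g$ are crucial — they ensure the flowout Lagrangians $\Lambda(z_0,\zeta_0,a_0,s_0)$ and their successive reflections remain smooth, and that the various wavefront pieces are disjoint. Second, the symbolic inversion ``$\pa_\nu u_0|_{\pa M}\mapsto u_0|_{\pa M}$'' recovers the full symbol but not the Schwartz-kernel exactly, so the equality $v_0-u_0\in\CI((0,T)\times N)$ is achieved only after asymptotically summing a series of lower-order corrections at each step; carrying this out requires a careful Borel-type construction and the observation, already used in~\S\ref{SsDist}, that the microlocal parametrix for $\Box_g$ on the relevant piece of $M_1$ is well-defined up to smoothing error.
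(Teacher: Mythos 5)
The central gap is your claim that $\pa_\nu R_1|_{\pa M}=\pa_\nu u_0|_{\pa M}$ modulo $\CI$, and hence $\Lambda_g^{\rm lin}\wh f_0 = 2\pa_\nu u_0|_{\pa M}$ modulo $\CI$ near the first exit point. This identity holds only at the level of the \emph{principal} symbol. The Dirichlet condition forces $a_j^{\rm ref}|_V=-a_j^{\rm inc}|_V$ for all $j$, and the phases satisfy $\pa_\nu\phi^{\rm ref}=-\pa_\nu\phi^{\rm inc}$, so the terms $\mathrm{i}\theta(\pa_\nu\phi^\bullet)a_j^\bullet$ in the Neumann trace indeed match. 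But at subprincipal order the Neumann trace also contains $\pa_\nu a_0^{\rm ref}$, and this is \emph{not} $\pm\pa_\nu a_0^{\rm inc}$: the two amplitudes solve transport equations along the reflected and incident bicharacteristics respectively, and $\pa_n^2\phi^{\rm ref}\neq\pa_n^2\phi^{\rm inc}$ whenever $\pa_n g^{\alpha\beta}$ does not vanish. Thus $\pa_\nu\wh v_0|_V-2\pa_\nu u_0|_V$ is a genuine conormal error one order below the principal symbol, which Borel summation cannot remove, and dividing the full symbol expansion of $\Lambda_g^{\rm lin}\wh f_0$ by $2\mathrm{i}\lambda\xi_n$ does not return the full symbol of $u_0|_{\pa M}$. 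The paper resolves exactly this issue: knowing the boundary values $a_0^{\rm inc}|_V$, $a_0^{\rm ref}|_V$ and the phase data, one \emph{propagates both amplitudes into the known exterior $M_1\setminus M$} via the transport equations \eqref{transport_eqs}, recovers $\pa_\nu(a_0^{\rm inc}+a_0^{\rm ref})|_V$ from there, subtracts it from the subprincipal symbol of $\Lambda_g^{\rm lin}\wh f_0$, and thereby isolates $a_1^{\rm inc}|_V$; iterating gives all $a_j^{\rm inc}|_V$. Without this exterior transport step, your symbolic inversion is not available.

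Second, the iterative framework misreads the role of $u_0$. By the cutoff construction \eqref{EqQ2}, $u_0=\cQ_g h_0$ has singular support on $\pa M$ confined to the first-entry set $\wt U(z_0,\zeta_0,2a_0)$ and the first-exit set $V(z_0,\zeta_0,a_0)$; there are no further boundary singularities to cancel. Once $\wh f_1$ matches $u_0|_V$ modulo $\CI$ with support away from $\wt U$, the full boundary datum $f_0=\wh f_0+\wh f_1$ equals $u_0|_{\pa M}$ modulo $\CI$, hence $v_0-u_0$ has smooth Dirichlet data, smooth interior $\Box_g$-source, and vanishes for $\ft<0$; it is therefore smooth in one step. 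Your picture in which $\wh f_1^{(1)}$ ``launches a new singularity'' that reflects at $p_2,p_3,\ldots$ is an artifact of having cancelled only the principal symbol; what survives is not a finite sequence of reflections bounded by $K$ but an infinite regress of lower-order corrections at the \emph{same} exit set. Finally, the paper does not predict reflection positions from $g|_{M_1\setminus M^{\rm int}}$; it identifies the correct first-exit component of $\mathrm{singsupp}\,\Lambda_g^{\rm lin}\wh f_0$ by trial: perform scattering control there and accept the choice only if the resulting Neumann trace is singular on a single connected component.
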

\begin{proof}
  If $(z_0,\eta)\notin E_{\rm io}^{\rm int}$ for all $\eta\in\cW(z_0,\zeta_0,a_0)$, there is nothing to do: we automatically have $\wh v_0=u_0$ modulo $\CI((0,T)\times N)$ and can thus take $\wh f_1\equiv 0$. We discuss how to determine whether or not $(z_0,\eta)\notin E_{\rm io}^{\rm int}$ holds at the end of the proof.
  
  Otherwise, i.e.\ if there exists $\eta\in\cW(z_0,\zeta_0,a_0)$ so that $(z_0,\eta)\in E_{\rm io}^{\rm int}$, let
  \begin{equation}
  \label{EqFirstExit}
  \begin{split}
    \cV(z_0,\zeta_0,a_0)&=\{\gamma(\bft_1(z_0,\eta)),\dot{\gamma}(\bft_1(z_0,\eta)) \colon \eta\in\cW(z_0,\zeta_0,a_0),\ (z_0,\eta)\in E_{\rm io}^{\rm int}\}, \\
    V(z_0,\zeta_0,a_0) &:= \pi(\cV(z_0,\zeta_0,a_0)).
  \end{split}
  \end{equation}
  Since $\bft_1(z_0,\zeta_0)>\bft_0(z_0,\zeta_0)$, we can take $a_0$ small enough such that $\overline{\wt U(z_0,\zeta_0,2 a_0)}\cap V(z_0,\zeta_0,2 a_0)=\emptyset$. We return to the determination of the set $V(z_0,\zeta_0,a_0)$ of \emph{first} exit points of $\gamma_{z_0,\eta}$ at the end of the proof.
  
  Since the intersection of $K(z_0,\zeta_0,a_0)$ with $\pa M$ at $V(z_0,\zeta_0,a_0)$ is transversal, the solution $\wh v_0$ can be written as the sum of two \emph{conormal} distributions associated to the incoming null hypersurface $K(z_0,\zeta_0,a_0)$ (up until $V(z_0,\zeta_0,a_0)$) on the one hand, and the reflected null hypersurface on the other hand. Thus, we can write $\wh v_0=\wh v_0^{\mathrm{inc}}+\wh v_0^{\mathrm{ref}}$, where
   \begin{equation}
   \begin{split}
   \wh v_0^{\mathrm{inc}}(x)&=(2\pi)^{-1}\int_\R e^{\mathrm{i}\phi^{\mathrm{inc}}(x)\theta} a^{\rm inc}(x,\theta)\mathrm{d}\theta,\\
   \wh v_0^{\mathrm{ref}}(x)&=(2\pi)^{-1}\int_\R e^{\mathrm{i}\phi^{\mathrm{ref}}(x)\theta} a^{\rm ref}(x,\theta)\mathrm{d}\theta,
    \end{split}
   \end{equation}
   for $x$ in a  neighborhood of $V$ inside of $M$, and $\theta\in\mathbb{R}$; here, since our source $h_0$ is a classical (one-step polyhomogeneous) conormal distribution, the amplitudes $a^{\rm inc}$ and $a^{\rm ref}$ are such as well, with asymptotic expansions as $|\theta|\to\infty$
   \begin{equation}
   \label{EqSymbols}
     a^\bullet(x,\theta) \sim \sum_{j\geq 0} a^\bullet_j(x,\theta),\qquad \bullet={\rm inc,\ ref},\qquad
     a_j \in S^{\mu+\frac12-j}.
   \end{equation}
   We can assume that $\phi^{\mathrm{inc}}|_V=\phi^{\mathrm{ref}}|_V$. We remark that $\dd(\phi^\bullet\vert_{\partial M})$ is timelike, since $V$ is necessarily spacelike, cf.\ \cite[Lemma~2.5]{hintz2017reconstruction}; by switching the sign of $\theta$, we may assume that $\dd(\phi^\bullet\vert_{\partial M})$ is \emph{future} timelike. Moreover, the phase functions $\phi^{\mathrm{inc}}$ and $\phi^{\mathrm{ref}}$ both satisfy the eikonal equations \eqref{eq_phase}, but their normal derivatives differ by a sign, that is,
   \[
     \pa_\nu\phi^{\mathrm{inc}}=-\pa_\nu\phi^{\mathrm{ref}}\ \text{on } V.
   \]
   Invoking the zero Dirichlet boundary condition $\wh v_0^{\mathrm{inc}}+\wh v_0^{\mathrm{ref}}=0$ on $V$, the amplitudes satisfy the matching conditions
   \begin{equation}
   \label{EqMatch}
     a_j^{\rm ref}|_V = -a_j^{\rm ref}|_V,\quad j=0,1,\ldots.
   \end{equation}
   The Neumann data of $\wh v_0$ are thus
   \begin{equation}
   \begin{split}
   \Lambda^{\rm lin}_g(\wh f_0)\vert_V&=\partial_\nu \wh v_0\vert_V\\
      &=(2\pi)^{-1}\int e^{\mathrm{i}\phi^{\mathrm{inc}}(x)\theta} \left(2\mathrm{i}\theta(\partial_\nu\phi^{\mathrm{inc}})a_0^{\mathrm{inc}}+\partial_\nu(a_0^{\mathrm{inc}}+a_0^{\mathrm{ref}})+2\mathrm{i}\theta(\partial_\nu\phi^{\mathrm{inc}})a_1^{\mathrm{inc}}+\cdots\right)\mathrm{d}\theta.
   \end{split}
   \end{equation}
Note that $\phi^\bullet\vert_{\partial M}$ in a neighborhood of $V$ can be assumed to be known (with certain freedom of choice as long as $\{(x,\theta\mathrm{d}(\phi^\bullet\vert_{\partial M})):\, \theta\in\mathbb{R},\,\phi^\bullet(x)=0\}$ is a parametrization of the Lagrangian submanifold associated with $\Lambda^{\rm lin}_g(\wh f_0)$ near $V$).
    Since the metric $g$ on $M_1\setminus M$ is known, the value of $\partial_\nu\phi^{\mathrm{inc}}\vert_V<0$ is known. Thus, from the principal symbol of $\Lambda^{\rm lin}_g(\wh f_0)$ on $V$,
   \begin{equation}
   \label{EqNeumannSymbol}
     \sigma_p(\Lambda^{\rm lin}_g(\wh f_0))=2\mathrm{i}\theta(\partial_\nu\phi^{\mathrm{inc}})a_0^{\mathrm{inc}},
   \end{equation}
   we can determine the value of $a_0^{\mathrm{inc}}$ on $V$.
   
   The crucial insight is then that from the knowledge of $a_0^{\rm inc}$ thus obtained from the Dirichlet-to-Neumann map, and from the full symbol of $\Lambda_g^{\rm lin}(\wh f_0)|_V$, we can reconstruct the full jet of all $a_j^\bullet$ with $(j,\bullet)\neq(0,{\rm inc})$. Indeed, recall the matching conditions~\eqref{EqMatch}, and note that the amplitudes satisfy the transport equations
   \begin{equation}\label{transport_eqs}
   \begin{split}
   \mathrm{i}T^{\mathrm{inc}}a_j^{\mathrm{inc}}&=-\Box_ga_{j-1}^{\mathrm{inc}},\\
    \mathrm{i}T^{\mathrm{ref}}a_j^{\mathrm{ref}}&=-\Box_ga_{j-1}^{\mathrm{ref}},
   \end{split}
   \end{equation}
   for $j\geq 1$, where the transport operators $T^{\mathrm{inc}}$ and  $T^{\mathrm{ref}}$ are defined in \eqref{transport_T} with $\phi=\phi^{\mathrm{inc}}$ and $\phi= \phi^{\mathrm{ref}}$. By solving these transport equations in the exterior domain $M_1\setminus M$, the amplitudes $a_0^{\mathrm{inc}}$ and $a_0^{\mathrm{ref}}$ in can be determined in $M_1\setminus M$. Consequently, $\partial_\nu(a_0^{\mathrm{inc}}+a_0^{\mathrm{ref}})\vert_V$ can be determined. Then from the subprincipal symbol of $\Lambda^{\rm lin}_g(\wh f_0)$, we can determine $a_1^{\mathrm{inc}}\vert_V$. Continuing with this process, we can determine $a_j\vert_V$ for all $j=0,1,2,\ldots$. Therefore, the boundary trace $\wh v_0^{\mathrm{ref}}=-\wh v_0^{\mathrm{inc}}\vert_V=-u_0\vert_V$ of the reflected distorted plane wave can be determined, modulo smooth functions, from $\Lambda_g^{\rm lin}(\wh f_0)$. But this means that we can construct the boundary source
  \[
    f_0 = \wh f_0 + \wh f_1,\qquad \wh f_1 := -\wh v_0^{\rm ref},
  \]
  which has the desired property. Carefully note that this uses that $u_0$ itself does not have artificial singularities introduced at later times; this is precisely due to our cutoff construction in~\eqref{EqQ2} for singularities which enter, then leave, and \emph{do not reenter $(0,T)\times N$} since we cut them off in time.

  It remains to tie up two loose ends from the beginning of the proof. Firstly, whether or not the condition $(z_0,\eta)\notin E_{\rm io}^{\rm int}$ holds for all $\eta\in\cW(z_0,\zeta_0,a_0)$ can be checked as follows. First, for sufficiently small $a_0>0$, the set of singularities of the Neumann data $\Lambda_g^{\rm lin}(\wh f_0)$ inside the set $\wt U(z_0,\zeta_0,a_0)$ is contained in the singular set of $\wh f_0$; this follows from the transversality of $\gamma_{z_0,\eta}$ at the first point of intersection with $(0,T)\times\pa N$. For such $a_0$, we thus rule out that the first reflection happens inside $\wt U(z_0,\zeta_0,a_0)$. Next, if $\Lambda_g^{\rm lin}(\wh f_0)$ has no singularities outside of $\wt U(z_0,\zeta_0,a_0)$, we can conclude that $(z_0,\eta)\notin E_{\rm io}^{\rm int}$; indeed, otherwise (as we are assuming that $\gamma_{z_0,\eta}(t)$ has no conjugate points near $[0,\bft_1(z_0,\eta)]$) the Neumann trace would have a singularity by~\eqref{EqNeumannSymbol}.

  Secondly, the determination of the first exit points, in the case that $(z_0,\eta)\in E_{\rm io}^{\rm int}$ for some $\eta\in\cW(z_0,\zeta_0,a_0)$ no matter how small $a_0>0$ is, can be accomplished as follows. For sufficiently small $a_0>0$, the singular set of $\Lambda_g^{\rm lin}(\wh f_0)$ will have at least one connected component disjoint from the component contained in $\wt U(z_0,\zeta_0,a_0)$; one of them is the place of first reflection. (This is the place where we take advantage of our choice of the source as having connected wave front set, thus singular set.) The key point is that if we perform the above scattering control construction near a component of the singular set of $\Lambda_g^{\rm lin}(\wh f_0)$ which is disjoint from $V(z_0,\zeta_0,a_0)$, the solution $v_0$ of the modified equation~\eqref{linear_eq_u0} will remain singular at $V(z_0,\zeta_0,a_0)$. That is, we can determine whether we performed scattering control at the correct, i.e.\ first, reflection point simply by checking if, after scattering control, the singular set of the Neumann data of the solution $v_0$ of~\eqref{linear_eq_u0} has a single connected component.\qedhere
\end{proof}

Regarding the assumption on the absence of conjugate points along $\gamma_{z_0,\zeta_0}$, note that for $(z_1,\zeta_1):=\gamma_{z_0,\zeta_0}(\bft_0(z_0,\zeta_0))\in(0,T)\times\pa N$, the null-geodesic $\gamma_{z_1,\zeta_1}$ has no conjugate points prior to hitting $\pa M$ at the point $\gamma_{z_0,\zeta_0}(\bft_1(z_0,\zeta_0))$. Thus, for some small $\delta>0$, $\gamma_{z_0,\zeta_0}(t)$ has no conjugate points for $t\in[\bft_0(z_0,\zeta_0)-\delta,\bft_1(z_0,\zeta_0)+\delta]$. Correspondingly, distorted plane waves in $M_1$ generated by conormal sources near $\gamma_{z_0,\zeta_0}(\bft_0(z_0,\zeta_0)-\delta)$ are conormal until a bit beyond the exit out of $(0,T)\times N$, and in particular the Neumann data have a conormal singularity near $\gamma_{z_0,\zeta_0}(\bft_1(z_0,\zeta_0))$; the singularity in turn is the zero level set of the restriction to $\pa M$ of a phase function $\phi^{\rm inc}$ which extends, as a solution of the eikonal equation, \emph{smoothly} to a neighborhood of $\gamma_{z_0,\zeta_0}(\bft_1(z_0,\zeta_0))$ inside $M_1\setminus M^{\rm int}$.

Thus, given $(z_0,\zeta_0)$, we can determine $\delta_0>0$ so that for all $\delta\in(0,\delta_0)$, the phase function $\phi^{\rm inc}$ is smooth in a $\delta_0$-neighborhood of $\gamma_{z_0,\zeta_0}(\bft_1(z_0,\zeta_0))$. Moreover, $\delta_0$ can be chosen to depend continuously on $(z_0,\zeta_0)$, $\|\zeta\|_{g^+}=1$. Thus, we can (constructively) take the extension $N_1\setminus N$ in~\eqref{EqSleeve} sufficiently small so that the absence of conjugate points holds for all starting points $z_0\in M_1\setminus M^{\rm int}$.

\subsection{Nonlinear interactions of distorted plane waves}
\label{SsInter}

For $j=1,2,3,4$, take $z_j\in M_1\setminus M$ and $\zeta_j\in L^+_{z_j}M_1$ so that $(z_j,\zeta_j)\in E_{\rm i}^{\rm int}$; we recall that this condition can be checked only using the knowledge of $g|_{M_1\setminus M^{\rm int}}$, which we have already constructed. Choosing $a_0,s_0>0$ sufficiently small, and taking sources
\begin{equation}
\label{Eqhj}
  h_j\in\cI^{\mu+3/2}(M_1;N^*Y(z_j,\zeta_j,a_0,s_0))
\end{equation}
which we take to be classical conormal with connected wave front set, we can then use Proposition~\ref{PropScat} to produce Dirichlet data $f_j$ such that the solution $v_j$ to the equation
\begin{equation}
\label{Eqvj}
\begin{alignedat}{2}
  \Box_g v_j(x) &= 0, &\quad & \text{on }M,\\
  v_j(x) &= f_j(x), &\quad& \text{on }\partial M,\\
  v_j(\ft,x') &= 0, &\quad& \ft<0,
\end{alignedat}
\end{equation}
is the restriction to $M$ of a conormal distribution in $\cI^\mu(M_1;\, \Lambda(z_j,\zeta_j,a_0,s_0))$ which agrees with $u_j:=\cQ_g(h_j)$ on $(0,T)\times N$.

As Dirichlet data for four interacting distorted plane waves, we then take
\begin{equation}\label{boundarysource}
f=\sum_{j=1}^4\epsilon_j f_j;
\end{equation}
denote by $v=\sum_{j=1}^4 \epsilon_jv_j$ the solution of the linear initial-boundary value problem $\Box_g v=f$, $v|_{\pa M}=f$, $v|_{\ft<0}=0$. Let $u$ denote the solution of the \emph{nonlinear} equation~\eqref{maineq} with $f=\sum_{j=1}^4\epsilon_j f_j$. Writing $w=Q_g(F)$ if $w$ solves the linear wave equation
\[
\begin{split}
\Box_g w(x)&=F,\quad\text{on }M,\\
w(x)&=0,\quad \text{on }\partial M,\\
w&=0, \quad \ft<0,
\end{split}
\]
the wave produced by the four-fold intersection is then
\begin{equation}
\label{EqU4}
\mathcal{U}^{(4)}:=\partial_{\epsilon_1}\partial_{\epsilon_2}\partial_{\epsilon_3}\partial_{\epsilon_3}u\vert_{\epsilon_1=\epsilon_2=\epsilon_3=\epsilon_4=0}=Q_g(av_1v_2v_3v_4).
\end{equation}

We proceed to describe the microlocal structure of $\cU^{(4)}$ under certain nondegeneracy assumptions. Let us abbreviate
\[
K_j=K(z_j,\zeta_j,a_0,s_0,\bar t_j),\quad\quad \Lambda_j=\Lambda(z_j,\zeta_j,a_0,s_0,\bar t_j),
\]
where $\bar t_j$ is fixed to be larger than $\bft_1(z_j,\zeta_j)$ if the latter is defined and equal to $t_{z_j,\zeta_j}^+$ otherwise, and $\bar t_j<\bft_2(z_j,\zeta_j)$ if defined; and $a_0,s_0>0$ are chosen small. Assume that 
\begin{enumerate}
\item $K_i,\,K_j$, $i\neq j$, intersect transversally at a co-dimension $2$ submanifold $K_{ij}\subset M^{\rm int}$;
\item $K_i,\, K_j,\, K_k$, $i,j,k$ distinct, intersect transversally at a co-dimension $3$ submanifold $K_{ijk}\subset M^{\rm int}$.
\end{enumerate}
Under the assumption that $z_j\notin\gamma_{z_i,\zeta_i}([0,t^+_{z_i,\zeta_i}))$ for $i\neq j$, these two conditions are automatically satisfied if we choose the aperture $a_0$ of the sources $h_j$, see~\eqref{Eqhj}, sufficiently small. Indeed, in the limit $a_0\to 0$, the sets $K_j$ tend to the null-geodesics $\gamma_{z_j,\zeta_j}$, the tangent vectors of which are equal to the normal vector of $K_j$ at points on $\gamma_{z_j,\zeta_j}$; but since the $z_j$ are causally unrelated, any two $\gamma_{z_j,\zeta_j}$ necessarily intersect transversally, and a third null-geodesic $\gamma_{z_k,\zeta_k}$ has, at a triple intersection point $\gamma_{z_i,\zeta_i}(t_i)=\gamma_{z_j,\zeta_j}(t_j)=\gamma_{z_k,\zeta_k}(t_k)$, a tangent vector not collinear to either of the other two. Indeed, the only lightlike tangent vectors in the span of $\dot\gamma_{z_i,\zeta_i}(t_i)$ and $\dot\gamma_{z_j,\zeta_j}(t_j)$ (which is a 2-dimensional linear Lorentzian vector space) are scalar multiples of $\dot\gamma_{z_i,\zeta_i}(t_i)$ and $\dot\gamma_{z_j,\zeta_j}(t_j)$; this implies the transversality of $K_i$, $K_j$, $K_k$ for sufficiently small $s_0>0$.

In order to proceed, let us make the additional assumption that
\begin{equation}
\label{Eq1234}
\begin{split}
  &\text{for all $s_0>0$, }\,K_1,\,K_2,\,K_3,\,K_4\,\ \text{intersect transversally at a point $q_0\in M^{\rm int}$}, \\
  &\qquad\qquad\text{prior to exiting $(0,T)\times N$ for the first time (if they exit at all).}
\end{split}
\end{equation}
We discuss the verification of this assumption using the already reconstructed data at the end of this section; here, we note that due to our assumption on the absence of conjugate points in $M$, there is at most a single such intersection point $q_0$. We then put
\[
 \Lambda_{ij}=N^*K_{ij},\quad \Lambda_{ijk}=N^*K_{ijk},\qquad
 \Lambda_{q_0}=T^*_{q_0}M\setminus 0;
\]
these are all Lagrangian submanifolds in $T^*M_1$. For any $\Gamma\subset T^*M_1$, we denote by $\Gamma^g$ the flow-out of $\Gamma\cap L^{*,+} M_1$ under the geodesic flow of $g$ lifted to $T^*M_1$. Finally, set
\begin{gather*}
\Lambda^{(1)}=\bigcup_{i=1}^4\Lambda_i;\quad\,\,\Lambda^{(2)}=\bigcup_{i,j=1}^4\Lambda_{ij};\quad\,\,\Lambda^{(3)}=\bigcup_{i,j,k=1}^4\Lambda_{ijk};\\
K^{(1)}=\bigcup_{i=1}^4K_i;\quad K^{(2)}=\bigcup_{i,j=1}^4K_{ij};\quad K^{(3)}=\bigcup_{i,j,k=1}^4K_{ijk}; \\
\Xi=\Lambda^{(1)}\cup \Lambda^{(3),g}\cup \Lambda_{q_0}.
\end{gather*}

We then have
\begin{equation}
\label{EqU4incref}
  \mathcal{U}^{(4)}=\mathcal{U}^{\mathrm{inc}}+\mathcal{U}^{\mathrm{ref}},
\end{equation}
in $(0,T)\times N$, where (cf. \cite[Proposition 3.11]{lassas2018inverse})
\[
  \mathcal{U}^{\mathrm{inc}}=\mathcal{Q}_g(av_1v_2v_3v_4)\in \mathcal{I}^{4\mu+3/2}(\Lambda_{q_0}^g\setminus \Xi);
\]
here $\cQ_g$ denotes a forward parametrix for $\Box_g^{-1}$ on $M_1$ microlocally near $(\Lambda_{q_0}^g\setminus\Xi)\times\Lambda_{q_0}$ which is truncated after the passage of null-bicharacteristics out of $M$ into $M_1\setminus M$, analogously to the construction of~\eqref{EqQ2}.

In order to describe the symbol of $\cU^{\rm inc}$, we introduce some further notation: define the set of initial data for null-geodesics inside $(0,T)\times N^{\rm int}$ which strike $(0,T)\times\pa N$ by
\begin{equation}
\label{EqVisible}
  \cS := \{ (z_0,\zeta_0) \colon z_0\in(0,T)\times N^{\rm int},\ \zeta_0\in L^+_{z_0}M,\  \exists\,t\in(0,t^+_{z_0,\zeta_0})\ \text{s.t.}\ \gamma_{z_0,\zeta_0}(t)\in(0,T)\times\pa N \};
\end{equation}
importantly, the null convexity of $\pa M$ implies that $\gamma_{z_0,\zeta_0}$ in fact strikes $\pa M$ \emph{transversally} and thus exits $M$ for a positive amount of time. For $(z_0,\zeta_0)\in\cS$, we then define the exit time
\[
  \bft_0(z_0,\zeta_0) := \inf \{ t\in(0,t^+_{z_0,\zeta_0}) \colon \gamma_{z_0,\zeta_0}(t)\in(0,T)\times\pa N \}.
\]

If $(q_0,\zeta)\in\cS$ and $(y,\eta)=(\gamma_{q_0,\zeta}(\bft_0(q_0,\zeta)),\dot{\gamma}_{q_0,\zeta}(\bft_0(q_0,\zeta)^\flat)$, the wave $\cU^{\rm inc}$ incident to $(0,T)\times\pa N$ then has principal symbol given by (cf. \cite[Proposition 3.12]{lassas2018inverse})
\begin{equation}\label{symbol_Uinc}
\sigma_p(\mathcal{U}^{\mathrm{inc}})(y,\eta)=-24(2\pi)^{-3}a(q_0)\sigma_{p}(\mathcal{Q}_g)(y,\eta,q_0,\zeta^\flat)\prod_{j=1}^4\sigma_p(v_j)(q_0,\theta_j).
\end{equation}
Here $\theta_j\in N^*_{q_0}K_j$ denote the unique covectors for which $\zeta^\flat=\sum_{j=1}^4\theta_j$, and $\sigma_p(\mathcal{Q}_g)$ is the principal symbol of the FIO $\mathcal{Q}_g$ on $\Lambda_{q_0}^g$ away from $N^*\mathrm{Diag}$.

We can say more: in terms of~\eqref{EqU4incref}, we can write, near the point $y\in(0,T)\times\pa N$,
\begin{equation}
\begin{split}
\mathcal{U}^{\mathrm{inc}}(x)&=(2\pi)^{-1}\int e^{\mathrm{i}\phi^{\mathrm{inc}}(x)\theta}a^{\mathrm{inc}}(x,\theta)\mathrm{d}\theta\\
\mathcal{U}^{\mathrm{ref}}(x)&=(2\pi)^{-1}\int e^{\mathrm{i}\phi^{\mathrm{ref}}(x)\theta}a^{\mathrm{ref}}(x,\theta)\mathrm{d}\theta,
\end{split}
\end{equation}
where the phase functions $\phi^{\rm inc}$ and $\phi^{\rm ref}$ satisfy the eikonal equation~\eqref{eq_phase} and the boundary conditions $\phi^{\mathrm{inc}}=\phi^{\mathrm{ref}}$ and $\pa_\nu\phi^{\rm inc}=-\pa_\nu\phi^{\rm ref}$ on $\pa M$, while the symbols $a^{\bullet}$ are as in~\eqref{EqSymbols}. The Dirichlet boundary condition implies that $a^{\mathrm{ref}}(x,\theta)=-a^{\mathrm{inc}}(x,\theta)$ for $x\in\partial M$. Then
\begin{equation}
\label{EqpanU4}
\partial_\nu \mathcal{U}^{(4)}\vert_{\partial M}=(2\pi)^{-3}\int e^{\mathrm{i}\phi^{\rm inc}(x)\theta}\left(2\mathrm{i}\theta(\partial_\nu\phi^{\rm inc})a^{\mathrm{inc}}+\partial_\nu(a^{\mathrm{inc}}+a^{\mathrm{ref}})\right)\mathrm{d}\theta.
\end{equation}
Therefore, the contribution of the four-fold interaction to the Dirichlet-to-Neumann map is
\[
\begin{split}
&\sigma_p\bigr(\partial_{\epsilon_1}\partial_{\epsilon_2}\partial_{\epsilon_3}\partial_{\epsilon_4}\Lambda_{g,a}\left(\epsilon_1f_1+\epsilon_2f_2+\epsilon_3f_3+\epsilon_4f_4\right)\vert_{\epsilon_1=\epsilon_2=\epsilon_3=\epsilon_4=0}\bigr)(x,\theta)\\
&\quad=\sigma_p(\partial_\nu \mathcal{U}^{(4)}\vert_{\partial M})(x,\theta)\\
&\quad=-2\mathrm{i}\theta\xi_n a^{\mathrm{inc}}_0(x,\theta), \qquad
  \xi_n:=-\sqrt{-g^{\alpha\beta}(x)\xi'_\alpha\xi'_\beta},\quad \xi':=\dd(\phi^{\rm inc}|_{\pa M}).
\end{split}
\]
Thus, we can recover $a^{\mathrm{inc}}_0(x,\theta)$ for $x\in\partial M$ (and then the complete jet of $a^\bullet_j$, $j=0,1,2,\ldots$, and $\phi^\bullet$ at $\pa M$, where $\bullet=\rm inc,\,ref$; and therefore we can recover the (full) symbol of $\cU^{\rm inc}\vert_{\partial M}$ and then the symbol of $\cU^{\rm inc}$ in $M_1\setminus M^{int}$.

Thus, if the interaction point $q_0\in M^{\rm int}$ is such that the set of $\zeta\in L^+_{q_0}M$ with $(q_0,\zeta)\in\cS$ is nonempty, then this set is automatically open, and thus $\cU^{\rm inc}$ is singular on a codimension $1$ (i.e.\ $2$-dimensional) submanifold of $(0,T)\times\pa N$. If there does not exist $\zeta\in L^+_{q_0}M$ with $(q_0,\zeta)\in\cS$, then $\cU^{\rm inc}$ is smooth on $(0,T)\times\pa N$.

We argue that we can verify if assumption~\eqref{Eq1234} is satisfied: indeed, if for sufficiently small $a_0>0$ the null hypersurfaces $K_1,K_2,K_3,K_4$ do not intersect all at once, the four-fold interaction $\cU^{(4)}$ is smooth away from $\Xi=\Lambda^{(1)}\cup\Lambda^{(3),g}$. But the intersection of the projection of $\Xi$ to the base with $\pa M$ tends to a set of codimension $2$ (meaning: of Hausdorff dimension $3{-}2=1$); note here that the 1-dimensional Hausdorff measure of any triple intersection $K_{i j k}$ tends to $0$ as $a_0>0$, and for any $p\in K_{i j k}$, the dimension of the set of lightlike directions in $N_p^*K_{i j k}$ (i.e.\ the set of lightlike covectors which have unit length with respect to $g^+$) is equal to $2$ (compared to the dimension of the set of lightlike directions in $N_{q_0}^*M$, which is equal to $3$). See also \cite[\S3.3.2]{kurylev2018inverse}.

Similarly, if the intersection $\bigcap_{j=1}^4 K_j$ is nonempty but not transversal (but, due to the absence of conjugate points, still only consists of a single point), the wave front set of the product $v_1 v_2 v_3 v_4$ is contained in $\bigcup N^*K_{i j k}$, thus $\cU^{(4)}$ is singular only on a small set in the sense just explained.

Finally, if some $z_j$ did lie on the null-geodesic $\gamma_{z_i,\zeta_i}$, then we would have $K_j\subset K_i$. If the intersection $K_i\cap\bigcap_l K_l$, $l\neq i,j$, were transversal, we would have only a three-fold interaction which can be detected as above. Otherwise, we only have a two-fold interaction, which does not produce any additional singularities; this can thus be detected as well.

\subsection{Determination of the conformal class of the metric}
\label{SsConf}

We continue using the notation of the previous section; thus, we take $z_j\in M_1\setminus M$ and $\zeta_j\in L^+_{z_j}M_1$ for $j=1,2,3,4$ with $(z_j,\zeta_j)\in E_{\rm i}^{\rm int}$, and consider sources $h_j$ and solutions of the initial-boundary value problem $v_j$ with Dirichlet data $f_j$; we moreover take $f=\sum_{i=1}^4 \epsilon_i f_i$ as in~\eqref{boundarysource} and $u$ as in~\eqref{maineq}, and put $v_j=\frac{\partial}{\partial \epsilon_j}u\vert_{\epsilon_1=\epsilon_2=\epsilon_3=\epsilon_4=0}$, which solves the linear initial-boundary value problem with source $f_j$. Thus,
\[
\partial_\nu v_j\vert_{(0,T)\times\partial N}=\frac{\partial}{\partial \epsilon_j}\Lambda_{g,a}(f)\vert_{\epsilon_1=\epsilon_2=\epsilon_3=\epsilon_4=0}=\frac{\partial}{\partial \epsilon_j}\Lambda_{g,a}\biggl(\sum_{i=1}^4\epsilon_if_i\biggr)\bigg|_{\epsilon_1=\epsilon_2=\epsilon_3=\epsilon_4=0}.
\]

From now on, whenever $(z_j,\zeta_j)\in E_{\rm ioi}^{\rm int}$, we shall write $\wt\gamma_{z_j,\zeta_j}$ for the restriction of $\gamma_{z_j,\zeta_j}$ to parameters $t\in[0,\bft_2(z_j,\zeta_j))$, i.e.\ we stop the null-geodesic right before it would reenter $(0,T)\times\pa N$; otherwise, we let $\wt\gamma_{z_j,\zeta_j}=\gamma_{z_j,\zeta_j}$ with domain $[0,t^+_{z_j,\zeta_j})$. First, we test whether the intersection point $q_0$ of $\gamma_{z_j,\zeta_j}$ lies on the boundary; but $\bigcap_{j=1}^4\gamma_{z_j,\zeta_j}=q_0\in (0,T)\times \partial N$ if and only if $q_0\in\lim_{s_0\rightarrow 0}\bigcap_{j=1}^4\,\mathrm{singsupp}\,(\partial_\nu v_j\vert_{(0,T)\times\partial N})$.

As discussed in the previous section, we can determine whether the four-fold intersection of the null hypersurfaces $K_j$ is nonempty and transversal. Assuming thus that it is, then away from
\[
  \mathcal{K}^{(3)}=\pi(\Lambda^{(3),g})
\]
which bounds the set of spacetime points to which singularities arising from triple intersections could propagate, $\mathcal{U}^{\mathrm{inc}}$ is the restriction to $M$ of an element of $\mathcal{I}^{4\mu+3/2}(\Lambda_{q_0}^g\setminus(\Xi\cup T^*_{\cK^{(3)}}M_1))$; and as discussed after~\eqref{EqpanU4}, we can recover the full symbol of $\cU^{\rm inc}$ restricted to $(0,T)\times\pa N$. Upon taking the aperture $a_0$ of our distorted plane waves to $0$, the set
\[
  \{ (\ft,x) \in (0,T)\times \pa N \colon \cU^{\rm inc}\ \text{is smooth near $(0,\ft)\times\{x\}$ but not at $(\ft,x)$} \}
\]
tends to an open dense subset of the boundary light observation set
\[
  \cE_M(q_0) := ((0,T)\times\pa N) \cap \{ \gamma_{q_0,\zeta_0}(\bft_0(q_0,\zeta_0)) \colon \zeta_0\in L^+_{q_0}M \}
\]
(which, due to the `no conjugate points' hypothesis, is a smooth spacelike hypersurface), and thus we can recover $\cE_M(q_0)$.

Now, we claim that every point $q_0\in\mathbb{U}_g$ lying in $(0,T)\times N^{\rm int}$ is the transversal intersection of four distorted plane waves originating outside $(0,T)\times N$. To see this, we first prove the following result:

\begin{lemma}
\label{LemmaGeod}
  Given $q_0\in\mathbb{U}_g\cap((0,T)\times N^{\rm int})$, there exists a null-geodesic $\mu\colon[0,1]\to M$ with $\mu(0)\in(0,T)\times\pa N$ and $\mu(1)=q_0$, and which moreover has no cut points.
\end{lemma}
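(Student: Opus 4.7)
The strategy is to find a boundary point $p_0 \in (0,T)\times\partial N$ such that $q_0$ lies on the achronal boundary $\partial J_g^+(p_0) \setminus \{p_0\}$ of the causal future of $p_0$, and then to invoke classical Lorentzian causality theory to extract an achronal null-geodesic from $p_0$ to $q_0$; achronality will automatically preclude both cut points and conjugate points on $\mu|_{[0,1)}$.

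First I would construct $p_0$. Since $q_0 \in \mathbb{U}_g$, there exists at least one point $p_1 \in (0,T)\times\partial N$ with $p_1 \leq q_0$. Using that $\partial M = \mathbb{R}\times\partial N$ is timelike, I choose a future-directed timelike curve $\sigma \colon [0,\tau_{\max}) \to (0,T)\times\partial N$ with $\sigma(0)=p_1$, extended so that $\mathfrak{t}(\sigma(\tau)) > \mathfrak{t}(q_0)$ for $\tau$ near $\tau_{\max}$ (which forces $\sigma(\tau)\not\leq q_0$ for such $\tau$, since $\mathfrak{t}$ is a time function on $M_1\setminus M^{\rm int}$). Define
\[
  \tau_* := \sup\bigl\{\, \tau\in[0,\tau_{\max}) \colon \sigma(\tau)\leq q_0 \,\bigr\}.
\]
By closedness of the causal relation on the relevant relatively compact portion of $M_1$ (where the absence of conjugate points together with the null-convexity of $\partial M$ provides the needed causal regularity), the supremum is attained at some $\tau_*\in[0,\tau_{\max})$, and I set $p_0:=\sigma(\tau_*)$, so $p_0\leq q_0$. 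If one had $p_0 \ll q_0$, then $p_0$ would lie in the open set $I_g^-(q_0)$, which would then contain $\sigma(\tau)$ for all $\tau$ slightly larger than $\tau_*$, contradicting maximality. Hence $q_0 \in \partial J_g^+(p_0)\setminus\{p_0\}$, and in particular $q_0\notin I_g^+(p_0)$.

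Next, by a standard result in Lorentzian causal theory (e.g.\ O'Neill, \emph{Semi-Riemannian Geometry}, Proposition 10.46), whenever $p_0<q_0$ with $q_0\notin I_g^+(p_0)$, there exists a future-directed null-geodesic $\mu$ from $p_0$ to $q_0$ which lies entirely in $\partial J_g^+(p_0)$; after reparametrization, $\mu\colon[0,1]\to M$ with $\mu(0)=p_0$ and $\mu(1)=q_0$. The null-convexity of $\partial M$ ensures that $\mu$ does not exit into $M_1\setminus M$. Since $\mu\subset\partial J_g^+(p_0)$, no two of its points are chronologically related, i.e.\ $\mu$ is achronal. The null cut point from $p_0$ along $\mu$ is, by definition, the first parameter value past which $\mu$ enters $I_g^+(p_0)$; achronality therefore rules out cut points in $[0,1)$. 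A conjugate point before $q_0$ would likewise force $\mu$ into $I_g^+(p_0)$ beyond the conjugate point (and is in any case excluded by our global hypothesis on $\mathbb{U}_g$).

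The main obstacle is Step 1: producing $p_0$ with the property $q_0\in\partial J_g^+(p_0)$. This is where one needs to combine the existence of timelike curves on $\partial M$ (from timelikeness of $\partial M$), the existence of at least one boundary point in the causal past of $q_0$ (from $q_0\in\mathbb{U}_g$), and enough causal regularity to guarantee that the supremum in the definition of $\tau_*$ is attained. Step 2 is then a direct appeal to the classical causality theorems.
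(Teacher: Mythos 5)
Your outline mirrors the paper's structure: both locate a ``borderline'' boundary point by sliding along a timelike curve in $\partial M$ and taking a supremum, both then aim to extract an achronal null-geodesic, and both deduce the absence of cut points from achronality. The substance of the proof, however, is precisely the step you dismiss as ``a direct appeal to the classical causality theorems,'' and that appeal does not go through as stated because $M$ is a manifold with \emph{timelike boundary}. The classical result (O'Neill, Hawking--Ellis, etc.) that $\partial J_g^+(p_0)$ is ruled by null geodesics issuing from $p_0$ is proven for spacetimes without boundary; its proof uses limit-curve arguments and local ``short-cut'' deformations that freely perturb causal curves in all directions. With a boundary present, a limiting causal curve from $p_0$ to $q_0$ can spend time \emph{on} $\partial M$, the null-geodesic segment need not emanate from $p_0$ itself, and the short-cut deformation may push the curve out of $M$. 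Your claim that null-convexity ``ensures that $\mu$ does not exit into $M_1\setminus M$'' is also not what null-convexity gives: it controls tangentially grazing null geodesics, not transversal exits. Similarly, ``closedness of the causal relation'' on a manifold with boundary is not supplied by the no-conjugate-point hypothesis and null-convexity; it has to be established (the paper does this via Arzel\`a--Ascoli on the compact $N$).

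The paper's proof is essentially a re-derivation of the classical generator theorem adapted to this boundary setting. The two key devices you are missing are: (i) after taking the Arzel\`a--Ascoli limit curve $\gamma$ from $(\bar\ft,x_0)$ to $q_0$, one must single out $s_0$, the \emph{last} parameter at which $\gamma$ meets $\partial M$, and work only with $\gamma|_{[s_0,\ft(q_0)]}$---so the resulting null geodesic $\mu$ generally does \emph{not} start at $p_0$; and (ii) the short-cut argument must be replaced by the perturbation $\tilde\gamma_{j,\delta}(t)=\gamma_j(t)+(\delta e^{\digamma(t-\ft(q'))},0)$, which shifts the approximating causal curves forward in the product time coordinate $\ft$---a direction guaranteed to stay inside $M=(0,T)\times N$---and is then shown to be timelike by a careful estimate exploiting $\alpha>0$. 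This $\ft$-shift is what makes it possible to contradict the maximality of $\bar\ft$ when a ``corner'' of $\gamma$ is timelike, and it is the technical heart of the lemma. Without it, the existence of the achronal null geodesic inside $M$ is not established. Your final step (achronality $\Rightarrow$ no cut points) is fine and is essentially what the paper does via the short-cut argument, but the preceding two steps are gaps rather than routine appeals.
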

\begin{proof}
  By definition of $\mathbb{U}_g$, there exists $(\ft_0,x_0)\in(0,T)\times\pa N$ so that $q_0$ lies in the causal future of $(\ft_0,x_0)$; on the other hand, $q_0$ does \emph{not} lie in the causal future of $(T,x_0)$. Thus, we have
  \[
    \bar\ft := \sup \{ t\in(0,T) \colon q_0\in J_g^+((t,x_0)) \} \in [\ft_0,T).
  \]
  We claim that there exists a null-geodesic in $M$ joining $(\bar\ft,x_0)$ to $q_0$. We can select $t_j\in(0,T)$ and causal curves $\gamma_j\colon[t_j,\ft(q_0)]\to M$ so that $t_1\leq t_2\leq\ldots\nearrow\bar\ft$ and $\gamma_j(t_j)=(t_j,x_0)$, $\gamma_j(\ft(q_0))=q_0$; here, we parameterize the $\gamma_j$ to that $\ft\circ\gamma_j(t)=t$. Since $N$ is compact, the Arzel\`a--Ascoli theorem implies that upon passing to a subsequence of the $\gamma_j$, we may assume the existence of a limit $\lim_{j\to\infty}\gamma_j(s)=\gamma(s)$ for all $s\in[\bar\ft,\ft(q_0)]$; note here that $\gamma(\bar\ft)=(\bar\ft,x_0)$ and $\gamma(\ft(q_0))=q_0$, and $s\mapsto\gamma(s)$ is Lipschitz since each $\gamma_j$ is Lipschitz (with uniform Lipschitz constant). Let $s_0\in[\bar\ft,\ft(q_0)]$ be the supremum of all $s\in[\bar\ft,\ft(q_0))$ so that $\gamma(s)\in\pa M$; in particular, $s_0<\ft(q_0)$.
  
  We will use $\gamma|_{[s_0,\ft(q_0)]}$ to construct the desired null-geodesic $\mu$. Consider any $s\in(s_0,\ft(q_0)]$, and suppose that we have already re-defined $\gamma$ to be a smooth reparameterized null-geodesic on $[s,\ft(q_0)]$. (This is initially satisfied for $s=\ft(q_0)$.) Then $\gamma(s)\in(0,T)\times N^{\rm int}$. Denote by $\cU\subset M^{\rm int}$ a geodesically convex neighborhood of $q=\gamma(s)$ with closure $\bar\cU\subset M^{\rm int}$, and let $\eps>0$ be such that $\gamma_{q,\zeta}(1)\in\cU$ for all $\zeta\in T_q M$ with $|\zeta|_{g^+}\leq 3\eps$. Since $\gamma(s_0)\notin\cU$, there exist $s'\in(s_0,s)$ and $\zeta\in T_q M$ with $|\zeta|_{g^+}=\eps$ so that
  \[
    q':=\gamma(s')=\gamma_{q,\zeta}(1).
  \]
  By construction of $\gamma$, the tangent vector $\zeta$ is necessarily past causal.
  
  Suppose that $\zeta$ was past \emph{timelike}. For $\digamma>1$ (chosen momentarily), $j\in\mathbb N$, and $\delta>0$ small, define the curve\footnote{We work here in the product splitting $(0,T)\times N$ of $M$, and write for a point $p=(t,x)\in M$ and a real number $a\in(-t,T-t)$: $p+(a,0):=(t+a,x)$. A more intrinsic (but notationally less convenient) perspective which avoids using the product splitting proceeds by instead using the exponential map based at $p$ and evaluated at $a$ times a fixed future timelike vector on $M$ which is tangent to $\pa M$.}
  \[
    \tilde\gamma_{j,\delta}(t) := \gamma_j(t) + \bigl(\delta e^{\digamma(t-\ft(q'))},0\bigr),\quad t\in[t_j,\ft(q')];
  \]
  we claim that this curve is timelike. Indeed, using the form~\eqref{EqIMetric} of the metric, and writing $\gamma_j(t)=(t,x'_j(t))$, the causal nature of $\gamma_j$ implies that
  \[
    |\dot\gamma_j(t)|_{g(t,x'_j(t))}^2 = -\alpha(t,x'_j(t)) + |\dot x'_j(t)|_{\kappa(t,x'_j(t))}^2 \leq 0.
  \]
  Writing $w(t)=\delta e^{\digamma(t-\ft(q'))}$, we have
  \begin{align*}
    |\dot{\tilde\gamma}_{j,\delta}(t)|_{g(t+w(t),x'_j(t))}^2 &= -(1+w'(t))^2 \alpha(t+w(t),x'_j(t)) + |\dot x'_j(t)|_{\kappa(t+w(t),x'_j(t))}^2 \\
      &= |\dot\gamma_j(t)|_{g(t,x'_j(t))}^2 - \Bigl( (1+w'(t))^2 \alpha(t+w(t),x'_j(t)) - \alpha(t,x'_j(t)) \\
      &\hspace{12em} + |\dot x'_j(t)|_{\kappa(t+w(t),x'_j(t))}^2 - |\dot x'_j(t)|_{\kappa(t,x'_j(t))}^2\Bigr).
  \end{align*}
  We claim that the big parenthesis is nonnegative; indeed, denoting $\alpha_0=\inf_M\alpha>0$, and noting that $w'=\digamma w$, we can bound it from below for $t\in[t_j,\ft(q')]$ (so $0<w(t)\leq\delta$) by
  \[
    (2 w'(t)+w'(t)^2)\bigl(\alpha(t,x'_j(t))-C w(t)\bigr) - C w(t) \geq w\bigl( (2\digamma + \digamma^2 w)(\alpha_0-C\delta) - C\bigr) \geq \digamma\alpha_0 w > 0
  \]
  for some constant $C$ only depending on the metric $g$ (using that $\alpha$ and $\kappa$ are $\cC^1$), provided we fix $\digamma$ sufficiently large and consider $\delta\in(0,\delta_0]$ where $\delta_0>0$ is sufficiently small (and can be selected independently of $j$). The idea behind this construction is that one can dominate error terms of size $w$---arising from computing norms at points differing by $w(t)$ in the first coordinate---by a small constant times $w'$---arising in the expression for the tangent vector of $\tilde\gamma_{j,\delta}$.

  Furthermore, as $j\to\infty$ and $\delta>0$, we have $\tilde\gamma_{j,\delta}(\ft(q'))=\gamma_j(\ft(q'))+(\delta,0)\to q'$. Therefore, there exist $j_0\in\mathbb{N}$ and $\delta_0>0$ so that for all $j\geq j_0$ and $\delta\in(0,\delta_0]$, one can write $\tilde\gamma_{j,\delta}(\ft(q'))=\gamma_{q,\zeta_{j,\delta}}(1)$ where $\zeta_{j,\delta}\in T_q M$, $|\zeta_{j,\delta}|_{g^+}<2\eps$, is past timelike still. Fix such $\delta>0$. Then for sufficiently large $j$, the concatenation of $\tilde\gamma_{j,\delta}|_{[t_j,\ft(q')]}$ and $\gamma_{q,\zeta_{j,\delta}}(1-r)$, $r\in[0,1]$, and $\gamma|_{[s,\ft(q_0)]}$ is a piecewise smooth timelike curve connecting $(t_j+\delta,x_0)$ to $q_0$; this can be smoothed out to produce a \emph{smooth} timelike curve from $(t_j+\delta e^{-\digamma(\ft(q')-t_j)},x_0)$ to $q_0$. But as $j\to\infty$, we have $(t_j+\delta e^{-\digamma(\ft(q')-t_j)},x_0)\to(\bar t+\delta e^{-\digamma(\ft(q')-\bar t)},x_0)$; since $\bar t+\delta e^{-\digamma(\ft(q')-\bar t)}>\bar t$, we obtain a contradiction to the definition of $\bar t$.

  We conclude that $\zeta$ must be past \emph{lightlike}, and we re-define $\gamma$ by replacing the segment $\gamma|_{[s',s]}$ with the null-geodesic segment $\gamma_{q,\zeta}(1-r)$, $r\in[0,1]$, reparameterized by $\ft$. Repeat this construction for $s'$ in place of $s$: we obtain $s''\in(s_0,s')$ (so $s_0<s''<s'<s$), and a re-definition of $\gamma$ so that $\gamma''=\gamma|_{[s'',s']}$ and $\gamma'=\gamma|_{[s',s]}$ are null-geodesics. Consider the piecewise smooth curve $\gamma|_{[s'',s]}$; if it had a break point at $\gamma(s')$, i.e.\ if $\dot\gamma(s'+0)$ was not a scalar multiple of $\dot\gamma(s'-0)$, then by a standard short cut argument \cite[\S10]{ONeillSemi}, we could construct, for all sufficiently small $\delta>0$, a strictly timelike curve from any point in a $\delta$-neighborhood (with respect to $g^+$) of $\gamma(s'')$ to $\gamma(s)=q_0$, and then an argument as in the previous paragraph would produce a contradiction to the definition of $\bar t$. Therefore, $\gamma|_{[s'',s]}$ is, in fact, a smooth reparameterized null-geodesic.

  We can proceed in this fashion any finite number of times, starting at $s=\ft(q_0)$. (Note that we never reach a point on $\pa M$ in finitely many steps, as all constructions take place inside of $M^{\rm int}$.) Thus, we obtain $s_1\in[s_0,s')$ and a re-definition of $\gamma$ on $(s_1,\ft(q_0)]$ so that $\gamma|_{(s_1,\ft(q_0)]}$ is a smooth reparameterized null-geodesic. In the case that $s_1>s_0$, we can continue the construction at $\gamma(s_1)$. Therefore, we may in fact assume $s_1=s_0$. In conclusion, the thus re-defined curve $\gamma|_{[s_0,\ft(q_0)]}$ is a null-geodesic joining $\gamma(s_0)\in(0,T)\times\pa N$ with $\gamma(\ft(q_0))=q_0$; we define $\mu$ to be a smooth reparameterization of $\gamma|_{[s_0,\ft(q_0)]}$.

  Finally, we argue that $\mu$ has no cut points. If this were false, we could replace a segment of $\mu$ by another null-geodesic segment so that the resulting curve was not smooth. But then we could again appeal to a short cut argument and obtain a contradiction to the definition of $\bar t$. This finishes the proof.
\end{proof}

Denote now by $\gamma\colon[0,1]\to M$ a null-geodesic joining a point $\gamma(0)\in(0,T)\times\pa N$ to $q_0$. Consider $v_1:=-\dot{\gamma}(1)\in L^-_{q_0}M$, and let $\bft_1>0$ be such that $\gamma(0)=\gamma_{q_0,v_1}(\bft_1)$. For small $\delta>0$, we then have
\[
  z_1 := \gamma_{q_0,v_1}(\bft_1+\delta) \in (0,T)\times(N_1\setminus N);
\]
moreover, if we take $v_2,v_3,v_4\in L^-_{q_0}M$ to be very close (with respect to $g^+$) to $v_1$ and so that $v_1,v_2,v_3,v_4$ are linearly independent, then
\[
  z_j := \gamma_{q_0,v_j}(\bft_1+\delta) \in (0,T) \times (N_1\setminus N)
\]
as well. As initial velocities of null-geodesics at $z_j$, we then take
\[
  \zeta_j := -\dot\gamma_{q_0,v_j}(\bft_1+\delta).
\]
This construction guarantees that the $\gamma_{z_j,\zeta_j}$ intersect transversally at $q_0$, as desired; moreover, by taking $(z_j,\zeta_j)$ to be close enough to $(z_1,\zeta_1)$, they do not intersect prior to $q_0$, since otherwise $\gamma_{z_1,\zeta_1}$ would have to have a conjugate point on $[0,\bft_1+\delta]$, contradicting our assumption (for small enough $\delta$).

As a particular consequence of our arguments thus far, we can determine the collection
\[
  \{ \cE_M(q_0) \colon q_0 \in \mathbb{U}_g \cap M^{\rm int} \}
\]
of \emph{earliest} boundary light observation sets. An application of \cite[Theorem~1.2]{hintz2017reconstruction} then recovers $\mathbb{U}_g\cap M^{\rm int}$ as a smooth manifold, together with the conformal class of $g$. (In fact, the full strength of \cite[Theorem~1.2]{hintz2017reconstruction} is not needed, as we are only keeping track of the earliest light observation sets, which suffice for the reconstruction procedure of the reference to work.)

We would like to improve this and recover the structure of $\mathbb{U}_g$ as a smooth manifold with boundary $\mathbb{U}_g\cap\pa M=(0,T)\times\pa N$. In order to accomplish this, we take advantage of the extended manifold $M_1\supset(0,T)\times\pa N$ as follows. First of all, we can extend $\cE_M(q_0)$ for $q_0\in\mathbb U_g\cap M^{\rm int}$ to
\[
  \cE_{M_1}(q_0) \subset M_1 \setminus M^{\rm int}
\]
by taking the union of $\cE_M(q_0)$ with $\gamma_{z,\zeta}([0,\delta))$ where $z\in\cE_M(q_0)$, $\zeta$ is the unique (up to a positive scalar multiple) outward pointing null vector normal to $\cE_M(q_0)$ at $z$ (see \cite[Lemma~2.5]{hintz2017reconstruction}), and $\delta>0$ is arbitrary unless $\gamma_{z,\zeta}$ reenters $(0,T)\times N$, in which case we take $\delta=\bft_1(z,\zeta)$. Thus, the intersection with the exterior of $M$, i.e.\ $\cE_{M_1}(q_0)\cap M_1\setminus M$, is a smooth null hypersurface in a sufficiently small neighborhood of any $z\in\cE_M(q_0)$.

For $q_0\in(0,T)\times\pa N$, we can similarly consider all tangent vectors $\zeta\in L^+_{q_0}M$ which point out of $M$, and take the union $\cE_{M_1}(q_0)$ over all null-geodesic segments $\gamma_{q_0,\zeta}([0,\delta))$ with $\delta>0$ defined in the same way as before.

In order to recover the smooth structure of $\mathbb{U}_g$ near $q_0\in(0,T)\times\pa N$, we now follow the strategy of \cite{hintz2017reconstruction}. Namely, the earliest observation time along $4$ generic timelike curves in $M_1\setminus M$ defined in a neighborhood of $q_0$ provide smooth local coordinates in the intersection of a small neighborhood $\cU\subset M$ of $q_0$ with $M^{\rm int}$; and they remain smooth down to $\cU\cap\pa M$. This produces a smooth local coordinate system on $\cU$, as desired. This completes the reconstruction of $\mathbb{U}_g$ as a smooth manifold and the conformal class of $g$ on it.

\begin{remark}
  An alternative procedure constructs the intersection with $M_1\setminus M$ of the future light cones based at all points
  \begin{equation}
  \label{EqAllPoints}
    q_0\in\mathbb U_g\cup((0,T)\times(N_1\setminus N));
  \end{equation}
  the idea is that for a null-geodesic starting at $q_0$ which strikes $(0,T)\times\pa N$, we can determine (from the Dirichlet-to-Neumann map) the place and tangent vector on $(0,T)\times\pa N$ where it exits (if it exits at all), from where one can extend it using the metric $g$ in $M_1\setminus M$. Thus, we can construct the (earliest) light observation sets of all points~\eqref{EqAllPoints} as measured in $M_1\setminus M$; a straightforward application of the methods of \cite{kurylev2018inverse} then reconstructs the set in~\eqref{EqAllPoints} as a smooth manifold equipped with the conformal class of a Lorentzian metric.
\end{remark}

\subsection{Determination of the conformal factor}
\label{SsFact}

From now on, we just assume, without loss of generality, $g=e^{2\beta}\widetilde{g}$ with $\beta=0$ on $M_1\setminus M$; and we restrict to $\mathbb{U}_g=\mathbb{U}_{\widetilde{g}}$. Let $\Sigma_g$ be the characteristic set, i.e.,
\[
\Sigma_g=\{(x,\xi)\in T^*M_1 \colon |\xi|^2_{g(x)^{-1}}=0\}.
\]
Note that $\Sigma_g=\Sigma_{\widetilde{g}}$.
 Denote the Lagrangians $\Lambda^g$ and $\Lambda^{\widetilde{g}}$ to be the flowouts of $\Sigma_g$ under the Hamiltonians associated with $g$ and $\widetilde{g}$. By \cite[Proposition 4.5]{lassas2018inverse} we have $\Lambda^g=\Lambda^{\widetilde{g}}$ and the principal symbols of $\mathcal{Q}_g,\mathcal{Q}_{\widetilde{g}}\in \mathcal{I}^{-2}(N^*\mathrm{diag}\setminus\Lambda^g)$ satisfy
\[
\sigma_p(\mathcal{Q}_g)=e^{2\beta}\sigma_p(\mathcal{Q}_{\widetilde{g}}).
\]
The principal symbols in $\mathcal{I}^{-3/2}(\Lambda^g\setminus N^*\mathrm{diag})$ satisfy
\[
\sigma_p(\mathcal{Q}_g)(x,\xi,y,\eta)=e^{-\beta(x)}\sigma_p(\mathcal{Q}_{\tilde{g}})(x,\xi,y,\eta)e^{3\beta(y)},
\]
where $(y,\eta)$ is joined to $(x,\xi)$ by a null-bicharacteristic on $\Lambda^g$. Therefore, the solutions $v_i$, resp.\ $\wt v_i$ of the linear initial-boundary value problems~\eqref{Eqvj} with respect to $g$, resp.\ $\wt g$ with the same boundary source $f_j$ satisfy
\[
\sigma_p(v_i)(q_0,\theta_j)=e^{-\beta(q_0)}\sigma_p(\widetilde{v}_i)(q_0,\theta_j),
\]
for $\theta_j\in N_{q_0}^*K_j$.
By \cite[Proposition 3.12]{lassas2018inverse}, we have (cf. \eqref{symbol_Uinc})
\[
\sigma_p(\widetilde{\mathcal{U}}^{\mathrm{inc}})(y,\eta)=-\frac{1}{24}(2\pi)^{-3}\sigma_p(\mathcal{Q}_{\widetilde{g}})(y,\eta,q_0,\zeta^\flat)\wt a(q_0)\prod_{i=1}^4\sigma_p(\widetilde{v}_i)(q_0,\theta_j)\\
\]
and
\[
\begin{split}
\sigma_p(\mathcal{U}^{\mathrm{inc}})(y,\eta)&=-\frac{1}{24}(2\pi)^{-3}\sigma_p(\mathcal{Q}_g)(y,\eta,q_0,\zeta^\flat)a(q_0)\prod_{i=1}^4\sigma_p(v_i)(q_0,\theta_j)\\
&=-\frac{1}{24}(2\pi)^{-3}\sigma_p(\mathcal{Q}_{\tilde{g}})(y,\eta,q_0,\zeta^\flat)e^{3\beta(q_0)}a(q_0)e^{-4\beta(q_0)}\prod_{i=1}^4\sigma_p(\widetilde{v}_i)(q_0,\theta_j)\\
&=-\frac{1}{24}(2\pi)^{-3}\sigma_p(\mathcal{Q}_{\tilde{g}})(y,\eta,q_0,\zeta^\flat)a(q_0)e^{-\beta(q_0)}\prod_{i=1}^4\sigma_p(\widetilde{v}_i)(q_0,\theta_j).
\end{split}
\]
Therefore $\sigma_p(\widetilde{\mathcal{U}}^{\mathrm{inc}})(y,\eta)=\sigma_p(\mathcal{U}^{\mathrm{inc}})(y,\eta)$ implies that $\wt a(q_0)=a(q_0)e^{-\beta(q_0)}$. If we assume a priori that $a(q_0)=\wt a(q_0)=c\neq 0$ for all $q_0$, then this implies $e^{-\beta(q_0)}=1$, hence $\beta(q_0)=0$ for all $q_0$.

This finishes the proof of a large part of Theorem~\ref{ThmI}; it only remains to prove that $\Box_g e^\beta=0$, which we will do in the next section.

\section{Reconstruction using Gaussian beams}\label{gaussianbeam}

In this section, we will prove:
\begin{prop}\label{conf_uniqueness}
Suppose we are given two smooth Lorentzian metrics $g,\widetilde{g}$ and two smooth functions $a,\widetilde{a}$ on $M$ of the form~\eqref{EqIMetric}, and assume that null-geodesics in $(\mathbb U_g,g)$, resp.\ $(\mathbb U_{\wt g},\wt g)$ have no conjugate points. Assume further that $\widetilde{g}$ and $g$ are in the same conformal class, i.e., there exists a smooth function $\beta$ on $M$ such that $\widetilde{g}=e^{-2\beta}g$. 
If $\Lambda_{\widetilde{g},\widetilde{a}}=\Lambda_{g,a}$, then we have $\widetilde{a}=e^{-\beta}a$ and $\Box_g e^{-\beta}=0$ in $\mathbb{U}_g$.
\end{prop}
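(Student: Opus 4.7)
Since the relation $\widetilde a = e^{-\beta}a$ already follows from the symbol computation in \S\ref{SsFact}, the only remaining task is to show that
\[
\tilde q := e^{3\beta}\Box_g(e^{-\beta})
\]
vanishes on $\mathbb U_g$. My starting point is the conformal change formula~\eqref{EqConf}: if $v$ solves the linear forward problem $\Box_g v=0$, $v|_{\ft<0}=0$, $v|_{\pa M}=f$, then $w:=e^{\beta}v$ satisfies $(\Box_{\widetilde g}+\tilde q)w=0$. The boundary conditions $\beta|_{\pa M}=\pa_\nu\beta|_{\pa M}=0$, inherited from the boundary determination of \S\ref{boundary_determination}, together with the linearized equality $\Lambda_g^{\rm lin}=\Lambda_{\widetilde g}^{\rm lin}$, imply that $w$ and the $\widetilde g$-solution $\widetilde v$ with the same Dirichlet data $f$ share their full Dirichlet \emph{and} Neumann data on $\pa M$.

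\paragraph{Integral identity.} For any backward solution $\psi$ of $\Box_{\widetilde g}\psi=0$ vanishing in $\ft>T$, Green's identity applied to the pair $(w,\psi)$ and to the pair $(\widetilde v,\psi)$, subtracted, removes all boundary contributions and yields
\[
\int_M \tilde q\, w\, \psi\, dV_{\widetilde g}=0
\]
for every admissible $(v,\psi)$. The plan is to use this as a test identity with $v$ and $\psi$ concentrating on a common null-geodesic, so as to convert it into a transport identity for $\tilde q$.

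\paragraph{Gaussian beams along a shared null-geodesic.} Fix a null-geodesic $\gamma\subset\mathbb U_g$ joining two points of $(0,T)\times\pa N$; by the no-conjugate-point hypothesis such $\gamma$ cover $\mathbb U_g$. Since $g$ and $\widetilde g=e^{-2\beta}g$ are conformal, they share null-geodesics as unparametrized curves, so the \emph{same} carrier $\gamma$ supports both a forward Gaussian beam $v_\lambda = A_v e^{i\lambda\Phi}$ for $\Box_g$ and a backward Gaussian beam $\psi_\lambda = A_\psi e^{-i\lambda\bar\Phi}$ for $\Box_{\widetilde g}$, concentrated in a tube of transverse size $\lambda^{-1/2}$ about $\gamma$. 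The boundary traces of these ansätze drive the actual IBVPs; the resulting exact solutions $v,\psi$ agree with the beam approximations modulo $O(\lambda^{-N})$ in suitable energy norms. The product $w_\lambda\psi_\lambda=e^{\beta}v_\lambda\psi_\lambda$ carries combined phase $i\lambda(\Phi-\bar\Phi)=-2\lambda\,\mathrm{Im}\,\Phi$, which is non-oscillatory and exponentially peaked on $\gamma$; a transverse stationary phase gives
\[
0 \;=\; \int_M \tilde q\, w_\lambda\,\psi_\lambda\, dV_{\widetilde g} \;=\; c\,\lambda^{-\frac{3}{2}}\int_\gamma \tilde q\,(e^{\beta} A_v A_\psi)\,d\tau + O(\lambda^{-\frac{5}{2}}),
\]
so in the limit $\lambda\to\infty$ the weighted light-ray transform of $\tilde q$ vanishes along every such $\gamma$.

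\paragraph{Inversion and main obstacle.} Varying the initial position, direction, and transverse profile of the beams produces a rich enough family of weighted integrals to force $\tilde q\equiv 0$ pointwise on $\mathbb U_g$, via the injectivity of the null-geodesic ray transform on a manifold without conjugate points whose null-geodesics escape through $(0,T)\times\pa N$. This gives $\Box_g e^{-\beta}=0$ in $\mathbb U_g$, completing the proof. The main technical obstacle is to keep $v_\lambda$ and $\psi_\lambda$ microlocally concentrated near $\gamma$ despite reflections at the timelike boundary $\pa M$: either one restricts to a single boundary-to-boundary segment of $\gamma$ and adds a scattering-control correction to the driving boundary data in the spirit of Proposition~\ref{PropScat}, or one exploits the extended manifold $M_1$ of~\eqref{EqMfdLarge}, on which $\beta\equiv 0$ on $M_1\setminus M$, to continue the beams smoothly across $\pa M$ and absorb reflection contributions into asymptotically negligible remainders.
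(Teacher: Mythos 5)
The paper's proof and your proposal diverge at the order of linearization of the Dirichlet-to-Neumann map. You derive an integral identity $\int_M \tilde q\, w\,\psi\, dV_{\widetilde g}=0$ from the \emph{first-order} linearization $\Lambda_g^{\rm lin}=\Lambda_{\widetilde g}^{\rm lin}$ (your Green's identity step is correct, and the product of a forward Gaussian beam $w_\lambda$ and a backward beam $\psi_\lambda$ carried by the same null-geodesic $\gamma$ indeed concentrates on $\gamma$ and yields the weighted light-ray transform of $\tilde q$). The paper instead uses the \emph{fourth-order} linearization, as in the proof of Lemma~\ref{lemma_q}: four Gaussian beams $\widetilde u^{(1)},\widetilde u^{(2)},\widetilde u^{(3)},\widetilde u^{(0)}$ concentrating on four null-geodesics meeting transversally at a single interior point $q_0$, with phases satisfying $\sum_j\kappa_j\theta_j=0$, so that the product concentrates at the point $q_0$. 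The leading order of stationary phase gives $\widetilde a=e^{-\beta}a$, and the subleading order yields a sum of \emph{partial} light-ray integrals $\int_0^{s_j}q(\gamma^{(j)}(s))\,\mathrm{d}s$ from the boundary entry points $x_j$ to the interior interaction point $q_0$; taking $\varsigma\to 0$ isolates $\int_0^{s_0}q(\gamma^{(0)}(s))\,\mathrm{d}s=0$, which holds for every $q_0$ along $\gamma^{(0)}$ and can thus be differentiated in $s_0$ to give $q(q_0)=0$ pointwise.

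The gap in your argument is the final inversion step. Because $w_\lambda,\psi_\lambda$ are genuine solutions on all of $M$, your integral $\int_\gamma\tilde q\,(e^\beta A_vA_\psi)\,d\tau$ extends over the entire segment of $\gamma$ from boundary to boundary; you cannot truncate it at an interior point. Hence your argument reduces to the injectivity of the (weighted) light-ray transform of a \emph{time-dependent} function on a Lorentzian manifold, which you invoke but do not prove or reference. This is precisely the open (or at best highly nontrivial, requiring e.g.\ analyticity or special geometry) linear inverse problem that motivates the use of the nonlinearity in the first place---the introduction of the paper emphasizes that ``the recovery of time-dependent coefficients is possible for some nonlinear equations, whereas the corresponding problems for linear equations are still largely open,'' and the reference \cite{feizmohammadi2019recovery} that the paper follows makes exactly the same observation. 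The whole point of the fourth-order linearization is that it produces \emph{partial} integrals $\int_0^{s_0}q\,\mathrm{d}s$ whose vanishing for all $s_0$ inverts trivially by differentiation, with no appeal to light-ray transform injectivity. Until you supply (or cite a proof of) the injectivity of the full null-geodesic ray transform on $\mathbb U_g$ for time-dependent integrands, the first-order approach does not close.
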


By the boundary determination, we have $\beta\vert_{(0,T)\times\partial N}=\partial_\nu\beta\vert_{(0,T)\times\partial N}=0$. Denote by $\Lambda_{g,q,a}$ the Dirichlet-to-Neumann map for the equation $\square_gu+qu+au^4=0$. We start from the fact, shown in the proof of Lemma~\ref{conf_invar}, that
\[
\Lambda_{\widetilde{g},\widetilde{a}}=\Lambda_{g,q,e^\beta\widetilde{a}}=\Lambda_{g,0,a}=\Lambda_{g,a},
\]
where $-q=e^{\beta}\square_ge^{-\beta}$. (See also \cite[Lemma~2]{stefanov2018inverse}.)
Therefore, we only need to consider the equations
\[
\square_g\widetilde{v}+q\widetilde{v}+e^\beta\widetilde{a}\widetilde{v}^4=0,
\]
and
\[
\square_gv+av^4=0,
\]
on the same Lorentzian manifold $(M,g)$. It is easy to see that Proposition \ref{conf_uniqueness} is then a direct consequence of the following lemma.
\begin{lemma}\label{lemma_q}
Assume that null-geodesics in $(\mathbb U_g,g)$ have no conjugate points. If $\Lambda_{g,q,e^\beta\widetilde{a}}=\Lambda_{g,0,a}$, then $q=0,\, \widetilde{a}=e^{-\beta} a$ in $\mathbb{U}_g$.
\end{lemma}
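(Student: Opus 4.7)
\medskip

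\emph{Proof plan.} The plan is to split the argument into two parts. First, using the $\epsilon$-derivative at $\epsilon=0$ of the identity $\Lambda_{g,q,e^\beta\wt a}(\epsilon f)=\Lambda_{g,0,a}(\epsilon f)$, we deduce that the linear Dirichlet-to-Neumann maps of $\Box_g+q$ and $\Box_g$ agree; from this we will recover $q=0$ (equivalently $\Box_g e^{-\beta}=0$) using Gaussian beams. Second, with $q=0$ in hand we will perform a fourth-order linearization and again appeal to Gaussian beams concentrated at a point in order to show $\wt a=e^{-\beta}a$.

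For the first step, let $\wt u$ solve $(\Box_g+q)\wt u=0$ with $\wt u|_{\pa M}=f$, $\wt u|_{\ft<0}=0$, let $u'$ solve $\Box_g u'=0$ with the same Dirichlet data (and $u'|_{\ft<0}=0$), and let $w$ solve $\Box_g w=0$ in the reverse time direction (i.e.\ $w|_{\ft>T+\delta}=0$ for some small $\delta>0$). Since $\pa_\nu\wt u|_{\pa M}=\pa_\nu u'|_{\pa M}$ by the equality of linear DN maps, standard integration by parts yields
\[
\int_M q\,\wt u\, w\,\dd V_g=\int_{\pa M}\bigl((\wt u-u')\pa_\nu w - w\,\pa_\nu(\wt u-u')\bigr)\,\dd S_g=0.
\]
Now take $\wt u=\wt u_\lambda$ and $w=w_\lambda$ to be Gaussian beam quasimodes along a null-geodesic segment $\mu\subset\mathbb U_g$ joining two boundary points. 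Standard Gaussian beam constructions (extended to the slightly larger manifold $M_1$ in order to avoid spurious boundary reflections, as already exploited in Section~\ref{interiordetermination}) give $\wt u_\lambda\cdot w_\lambda\to c_\mu\,\delta_\mu$ in a suitable distributional sense as $\lambda\to\infty$, with $c_\mu\neq 0$. Passing to the limit gives the light ray transform identity $\int_\mu q\,\dd s=0$ along every such $\mu$. By the no-conjugate-points hypothesis together with the null-convexity of $\pa M$, the light ray transform is injective on $\mathbb U_g$; this yields $q=0$ in $\mathbb U_g$, and hence $\Box_g e^{-\beta}=0$ in $\mathbb U_g$ since $-q=e^\beta\Box_g e^{-\beta}$.

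For the second step, now that $q=0$, the linear problem is $\Box_g$ in both cases, so the linearizations $v_j=\partial_{\epsilon_j}u|_{\epsilon=0}$ and $\wt v_j=\partial_{\epsilon_j}\wt u|_{\epsilon=0}$ with common Dirichlet data $f_j$ coincide. Differentiating $\Lambda_{g,q,e^\beta\wt a}=\Lambda_{g,0,a}$ four times in $\epsilon_1,\ldots,\epsilon_4$ at $\epsilon=0$ gives, as in Section~\ref{SsInter},
\[
\pa_\nu Q_g\bigl((a-e^\beta\wt a)\,v_1 v_2 v_3 v_4\bigr)\big|_{\pa M}=0,
\]
together with vanishing Dirichlet trace. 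Pairing with a fifth linear solution $v_0$ (satisfying $\Box_g v_0=0$ and $v_0|_{\ft>T+\delta}=0$) and integrating by parts yields
\[
\int_M (a-e^\beta\wt a)\,v_0 v_1 v_2 v_3 v_4\,\dd V_g=0.
\]
Given $q_0\in\mathbb U_g\cap M^{\rm int}$, choose five null directions $\zeta_0,\ldots,\zeta_4$ at $q_0$ (which is easy since the null cone at $q_0$ is $3$-dimensional in $T_{q_0}M$) and take $v_j$ to be Gaussian beams concentrated along the corresponding null-geodesics through $q_0$; a standard stationary-phase computation shows that the integral above asymptotically detects $(a-e^\beta\wt a)(q_0)$ times a nonzero constant. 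Hence $\wt a(q_0)=e^{-\beta(q_0)}a(q_0)$ at every such $q_0$, and by continuity this extends to $\mathbb U_g$.

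The main obstacle is the first step: proving injectivity of the light ray transform on $\mathbb U_g$ from the Gaussian-beam identity. The no-conjugate-points assumption is used exactly here, and one must also make sure that the Gaussian beams one constructs inside $\mathbb U_g$ can be arranged not to pick up spurious reflections on $\pa M$---something which in the present product geometry can be done by working in the extended manifold $M_1$ as in Section~\ref{interiordetermination}. The fourth-order step is then essentially a routine stationary-phase argument with five Gaussian beams meeting transversally at a point.
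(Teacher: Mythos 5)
Your overall strategy is genuinely different from the paper's, and unfortunately the first step contains a gap that the paper's argument was specifically designed to avoid.

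Your step 1 is fine up to and including the integral identity $\int_M q\,\wt u\,w\,\dd V_g=0$ and its consequence that $\int_\mu q\,\dd s=0$ along every maximal null-geodesic segment $\mu$ in $\mathbb U_g$ joining two boundary points. The problem is the assertion that ``the light ray transform is injective on $\mathbb U_g$'' under the hypotheses of no conjugate points and null-convexity of $\pa M$. For time-\emph{dependent} scalar functions on a smooth Lorentzian manifold, injectivity of the light ray transform is not a known theorem under these assumptions; it is in fact a well-known open problem (known cases require, e.g., real-analyticity of the metric, or reduce to attenuated X-ray transforms in a static product setting under further hypotheses). Here $q=-e^\beta\Box_g e^{-\beta}$ is in general time-dependent, so you cannot invoke time-independent X-ray injectivity results either. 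Note also that pairing a forward Gaussian beam with a backward one along the same geodesic forces both endpoints of the segment $\mu$ onto $\pa M$, so you only obtain the transform over \emph{full} broken null-geodesics; you cannot differentiate with respect to an interior endpoint.

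The paper circumvents this entirely by not separating the steps. It keeps $q$ in the game during the fourth-order linearization: with boundary data $\sum_{j=1}^4\epsilon_jf^{(j)}$ and $f^{(4)}=f^{(3)}$, the identity $\int_M e^\beta\wt a\,\wt u^{(1)}\wt u^{(2)}(\wt u^{(3)})^2\wt u^{(0)}\,\dd V_g=\int_M a\,u^{(1)}u^{(2)}(u^{(3)})^2 u^{(0)}\,\dd V_g$ compares solutions of $(\Box_g+q)v=0$ with solutions of $\Box_g v=0$ sharing the same Dirichlet data. Because the Gaussian beam phases and leading amplitudes agree but the subleading amplitudes differ by a term $-\tfrac{\mathrm i}{2}\det(Y(s_j))^{-1/2}\int_0^{s_j}q(\gamma^{(j)}(s))\dd s$, the stationary phase expansion at $q_0$ produces, at leading order, $e^\beta\wt a=a$ at $q_0$, and at subleading order a weighted sum $\sum_j\kappa_j^{-1}\int_0^{s_j}q(\gamma^{(j)}(s))\dd s=0$. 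Crucially, by choosing $\theta_2,\theta_3$ as $\varsigma$-perturbations of $\theta_1$ with $\sum\kappa_j\theta_j=0$ and letting $\varsigma\to 0$, this collapses to $\int_0^{s_0}q(\gamma^{(0)}(s))\dd s=0$ where the \emph{interior} endpoint $\gamma^{(0)}(s_0)=q_0$ is a free parameter; differentiating in $s_0$ yields $q(q_0)=0$ pointwise. This is strictly stronger information than a full boundary-to-boundary light ray transform, and is exactly what your two-step decomposition loses. Your step 2, conditional on step 1, is essentially correct (the five-beam stationary phase would work, and matches the leading-order part of the paper's computation), but since step 1 is not established, the proposal as a whole has a genuine gap.
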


Instead of using distorted plane waves as in Section \ref{interiordetermination}, we will use Gaussian beam solutions to prove Proposition \ref{conf_uniqueness}. The Gaussian beams have also been used for various inverse problems for both elliptic an hyperbolic equations \cite{katchalov1998multidimensional,bao2014sensitivity, belishev1996boundary, dos2016calderon,feizmohammadi2019timedependent,feizmohammadi2019recovery,feizmohammadi2019inverse}. Note that the fact $\widetilde{a}=e^{-\beta}a$ has already been proved in previous section using distorted plane waves, and this section includes an alternative proof.
\subsection{Construction of Gaussian beam solutions}
We will construct Gaussian beam solutions \cite{RalstonLocalized} for the \textit{linear} equation 
\[
\square_g\widetilde{u}+q\widetilde{u}=0, \quad (\text{and}\quad \square_gu=0).
\]
A Gaussian beam can be thought as a wave packet traveling along a null-geodesic $\gamma$. We first construct Gaussian beam solutions on the manifold $M_1$ without boundary.
The construction can be done in Fermi coordinates in a neighborhood of $\gamma$. Assume that $\gamma$ passes through a point $p\in M$ and joins two points $\gamma(\tau_-)$ and $\gamma(\tau_+)$ on the boundary $\mathbb{R}\times\partial N$. We will use Fermi coordinates $\Phi$ on the extended manifold $M_1$ in a neighborhood of $\gamma([\tau_-,\tau_+])$, denoted by $(z^0:=\tau,z^1,z^2,z^3)$, such that $\Phi(\gamma(\tau))=(\tau,0)$ (cf.\ \cite[Lemma 1]{feizmohammadi2019recovery}). 

The Gaussian beams are of the form
\[
\widetilde{u}_\rho(x)=e^{\mathrm{i}\rho\varphi(x)}\widetilde{\mathfrak{a}}_\rho(x),
\]
with
\begin{equation}\label{phi_a}
\varphi=\sum_{k=0}^N\varphi_k(\tau,z'),\quad\widetilde{\mathfrak{a}}_\rho(\tau,z')=\chi\left(\frac{|z'|}{\delta}\right)\sum_{k=0}^N\rho^{-k}\widetilde{a}_k(\tau,z'),\quad
\widetilde{a}_k(\tau,z')=\sum_{j=0}^N\widetilde{a}_{k,j}(\tau,z')
\end{equation}
in a neighborhood of $\gamma$,
\begin{equation}\label{neighbor_V}
\mathcal{V}=\bigl\{(\tau,z')\in M_1 : \tau\in \bigl[\tau_--\tfrac{\epsilon}{\sqrt{2}},\tau_++\tfrac{\epsilon}{\sqrt{2}}\bigr],\,|z'|<\delta\bigr\}.
\end{equation}
Here for each $j$, $\varphi_j$ and $\widetilde{a}_{k,j}$ are a complex valued homogeneous polynomials of degree $j$ with respect to the variables $z^i$, $i=1,2,3$, and $\delta>0$ is a small parameter. The smooth function $\chi:\mathbb{R}\rightarrow [0,+\infty)$ satisfies $\chi(t)=1$ for $|t|\leq\frac{1}{4}$ and $\chi(t)=0$ for $|t|\geq \frac{1}{2}$. 

By calculation, one can verify that
\begin{equation}
\begin{split}
(\square_g+q)(\widetilde{\mathfrak{a}}_\rho e^{\mathrm{i}\rho\varphi})&=e^{\mathrm{i}\rho\varphi}(\rho^2(\mathcal{S}\varphi)\widetilde{\mathfrak{a}}_\rho-\mathrm{i}\rho\mathcal{T}\widetilde{\mathfrak{a}}_\rho+(\square_g+q)\widetilde{\mathfrak{a}}_\rho), \\
  &\quad \mathcal{S}\varphi=\langle \mathrm{d}\varphi,\mathrm{d}\varphi\rangle_g, \\
  &\quad \mathcal{T}\widetilde{a}=2\langle \mathrm{d}\varphi,\mathrm{d}\widetilde{a}\rangle_g-(\square_g\varphi)\widetilde{a}.
\end{split}
\end{equation}
We construct the phase $\varphi$ and the amplitude $\widetilde{\mathfrak{a}}_\rho$ such that
\begin{equation}\label{S_cond1}
\frac{\partial^\Theta}{\partial z^\Theta}(\mathcal{S}\varphi)(\tau,0)=0,
\end{equation}
\begin{equation}\label{S_cond2}
\frac{\partial^\Theta}{\partial z^\Theta}(\mathcal{T}\widetilde{a}_0)(\tau,0)=0, \quad
\end{equation}
\begin{equation}\label{S_cond3}
\frac{\partial^\Theta}{\partial z^\Theta}(-\mathrm{i}\mathcal{T}\widetilde{a}_k+(\square_g+q) \widetilde{a}_{k-1})(\tau,0)=0
\end{equation}
for $\Theta=(\Theta_0=0,\Theta_1,\Theta_2,\Theta_3)$ with $|\Theta|\leq N$. For more details we refer to \cite{feizmohammadi2019recovery}. For the construction of the phase function $\varphi$, we can take
\[
\varphi_0=0,\quad \varphi_1=z^1,\quad
\varphi_2(\tau,z)=\sum_{1\leq i,j\leq 3}H_{ij}(\tau)z^iz^j.
\]
Here $H$ is a symmetric matrix with $\Im H(\tau)>0$; the matrix $H$ satisfies a Riccati ODE,
\begin{equation}\label{Riccati}
\frac{\mathrm{d}}{\mathrm{d}\tau}H+HCH+D=0,\quad\tau\in \bigl(\tau_--\tfrac{\epsilon}{2},\tau_++\tfrac{\epsilon}{2}\bigr),\quad H(0)=H_0,\text{ with }\Im H_0>0,
\end{equation}
where $C$, $D$ are matrices with $C_{11}=0$, $C_{ii}=2$, $i=2,3$, $C_{ij}=0$, $i\neq j$ and $D_{ij}=\frac{1}{4}(\partial_{ij}^2g^{11})$.
 
 \begin{lemma}[\text{\cite[Lemma 3.2]{feizmohammadi2019timedependent}}]\label{lemma_R0}
 The Riccati equation \eqref{Riccati} has a unique solution. Moreover the solution $H$ is symmetric and $\Im (H(\tau))>0$ for all $\tau\in (\tau_--\frac{\delta}{2},\tau_++\frac{\delta}{2})$. For solving the above Riccati equation, one has $H(\tau)=Z(\tau)Y(\tau)^{-1}$, where $Y(\tau)$ and $Z(\tau)$ solve the ODEs
 \begin{alignat*}{2}
 \frac{\mathrm{d}}{\mathrm{d}\tau}Y(\tau)&=CZ(\tau),&\quad Y(\tau_-)&=Y_0,\\
\frac{\mathrm{d}}{\mathrm{d}\tau}Z(\tau)&=-D(\tau)Y(\tau),&\quad Z(\tau_-)&=Y_1=H_0 Y_0.
 \end{alignat*}
 In addition, $Y(\tau)$ is nondegenerate.
 \end{lemma}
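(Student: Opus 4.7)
The plan is to reduce the Riccati equation to a linear Hamiltonian system via the ansatz $H=ZY^{-1}$, and to extract unique global solvability, symmetry of $H(\tau)$, and preservation of $\Im H(\tau)>0$ from two conserved Wronskian-type quantities built from $(Y,Z)$. Differentiating $H=ZY^{-1}$ and substituting the linear ODEs for $Y,Z$ in the statement gives
\[
\dot H = \dot Z\, Y^{-1} - Z Y^{-1}\dot Y\, Y^{-1} = -DY\, Y^{-1} - ZY^{-1}(CZ)Y^{-1} = -D - HCH,
\]
with $H(\tau_-)=H_0 Y_0 Y_0^{-1}=H_0$, so $ZY^{-1}$ solves the Riccati IVP wherever $Y$ is invertible. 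Since the $(Y,Z)$-system is linear with smooth coefficients, it is uniquely globally solvable on the full interval $(\tau_- - \tfrac{\epsilon}{2}, \tau_+ + \tfrac{\epsilon}{2})$, and only the nondegeneracy of $Y(\tau)$ remains in doubt.

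To handle that and the remaining symmetry/positivity assertions, let $Y^* := \bar Y^T$ and define
\[
W_1 := Y^T Z - Z^T Y, \qquad W_2 := \tfrac{1}{2\mathrm{i}}\bigl(Y^* Z - Z^* Y\bigr).
\]
Using $C^T=C$, $D^T=D$ together with the reality of the entries of $C,D$ (so that $C^*=C$ and $D^*=D$), a direct computation yields $\dot W_1 \equiv 0$ and $\dot W_2 \equiv 0$. The initial values are $W_1(\tau_-) = Y_0^T(H_0-H_0^T)Y_0 = 0$, using that $H_0$ is symmetric as part of the Gaussian beam setup (where $H$ is introduced as a symmetric matrix from the outset), and $W_2(\tau_-) = Y_0^*\,\Im(H_0)\,Y_0$, which is a Hermitian positive definite matrix because $\Im H_0>0$ and $Y_0$ is chosen invertible (e.g.\ $Y_0=I$). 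Thus $W_1\equiv 0$ and $W_2\equiv W_2^0$ with $W_2^0>0$.

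The positivity of $W_2^0$ then forces $Y(\tau)$ to remain invertible: if $Y(\tau_0)v=0$ for some $v\neq 0$, then
\[
v^* W_2 v = \Im\langle Y(\tau_0)v,\, Z(\tau_0)v\rangle = 0,
\]
contradicting $v^* W_2^0 v>0$. Hence $H(\tau)=Z(\tau)Y(\tau)^{-1}$ is defined throughout the interval. The identity $Y^T Z = Z^T Y$ coming from $W_1\equiv 0$ rearranges to $H = (Y^T)^{-1}(Y^T Z)Y^{-1}$, which is symmetric; and
\[
\Im H(\tau) = \tfrac{1}{2\mathrm{i}}\bigl(ZY^{-1} - (Y^*)^{-1}Z^*\bigr) = (Y^*)^{-1}\, W_2^0\, Y^{-1}
\]
is positive definite for every $\tau$. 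Uniqueness of the Riccati solution on the interval is then automatic, since the right-hand side of the Riccati equation is polynomial in $H$ and hence locally Lipschitz, and any local solution must agree with the globally-defined $ZY^{-1}$.

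The main technical point — such as it is — is the simultaneous use of the two Wronskians: $W_1$ encodes the Lagrangian (symmetric) structure of $H$, while $W_2$, which is morally the imaginary part of the symplectic pairing restricted to the bicharacteristic flow, simultaneously prevents $Y$ from becoming singular and $\Im H$ from touching zero. Once both conservation laws are written down, every assertion of the lemma reduces to linear algebra.
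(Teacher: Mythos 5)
The paper itself does not prove this lemma; it is quoted verbatim as \cite[Lemma~3.2]{feizmohammadi2019timedependent}, so there is no in-paper argument to compare against. Your proof is correct and is the standard linearization argument for matrix Riccati equations arising in the Gaussian beam construction (going back to Ralston). The computation $\dot H = -D - HCH$ from $H = ZY^{-1}$ is right, the two conserved quantities $W_1$ and $W_2$ use exactly the properties $C^T=C=C^*$, $D^T=D=D^*$ (both hold: $C$ is a real diagonal matrix and $D_{ij}=\tfrac14\partial^2_{ij}g^{11}$ is real symmetric), and the deduction that $W_2\equiv Y_0^*\,\Im(H_0)\,Y_0>0$ both keeps $Y$ invertible and gives $\Im H=(Y^*)^{-1}W_2^0\,Y^{-1}>0$ is clean.

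One small thing worth flagging: the paper writes the initial condition as $H(0)=H_0$ in \eqref{Riccati} while the $(Y,Z)$-system is initialized at $\tau_-$; you silently and correctly normalized to $H(\tau_-)=H_0$. Also, your Lipschitz remark at the end establishes uniqueness only among solutions with the same initial data on their common interval of existence; to get uniqueness on the full stated interval one should observe (as you implicitly do) that the candidate $ZY^{-1}$ is defined on all of $(\tau_--\tfrac{\epsilon}{2},\tau_++\tfrac{\epsilon}{2})$, so any other local solution is forced to coincide with it and cannot blow up earlier. That is a cosmetic point; the substance is fine.
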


 \begin{lemma}[\text{\cite[Lemma 3.3]{feizmohammadi2019timedependent}}]\label{lemma_H0}
 The following identity holds:
\[
 \det(\Im(H(\tau))|\det(Y(\tau))|^2=c_0
\]
 with $c_0$ independent of $\tau$.
 \end{lemma}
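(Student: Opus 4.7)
The plan is a Wronskian-type conservation argument. Define the matrix-valued function
\[
  W(\tau) := \bar Y(\tau)^T Z(\tau) - \bar Z(\tau)^T Y(\tau),
\]
and show first that $W$ is constant in $\tau$, then relate it to $\bar Y^T (\Im H) Y$.

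First I would compute $\dot W$ using the linear ODEs for $Y,Z$ from Lemma~\ref{lemma_R0}. Since $C,D$ are real (in fact, $C$ has constant real entries and $D_{ij}=\tfrac14\pa^2_{ij}g^{11}$ is real), conjugation preserves the system, giving $\dot{\bar Y}=C\bar Z$ and $\dot{\bar Z}=-D\bar Y$. Then
\[
  \dot W = (C\bar Z)^T Z + \bar Y^T(-D Y) - (-D\bar Y)^T Y - \bar Z^T(C Z) = \bar Z^T(C^T-C)Z - \bar Y^T(D-D^T)Y.
\]
Both $C$ and $D$ are symmetric ($C$ is diagonal and $D$ is manifestly symmetric as a Hessian-like object), hence $\dot W \equiv 0$. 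Thus $W$ is a constant matrix along $\gamma$.

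Next I would relate $W$ to $\Im H$. Using $H=ZY^{-1}$ and the symmetry $H^T=H$ from Lemma~\ref{lemma_R0}, which is equivalent to $Y^T Z = Z^T Y$ (and its conjugate $\bar Y^T\bar Z=\bar Z^T\bar Y$), I compute
\[
  \bar Y^T H Y = \bar Y^T Z, \qquad \bar Y^T \bar H Y = (\bar H\bar Y)^T Y = \bar Z^T Y,
\]
where the second identity uses symmetry of $\bar H$ together with $\bar H\bar Y=\bar Z$. Subtracting and dividing by $2i$ yields
\[
  \bar Y^T (\Im H) Y = \tfrac{1}{2i}(\bar Y^T Z - \bar Z^T Y) = \tfrac{1}{2i} W.
\]
Taking determinants and using $\det(\bar Y^T)\det(Y)=|\det Y|^2$ together with $\det(Y)\neq 0$ (again by Lemma~\ref{lemma_R0}) gives
\[
  |\det Y(\tau)|^2\, \det(\Im H(\tau)) = (2i)^{-3} \det W,
\]
where the exponent $3$ is the size of the matrices (indices $i,j=1,2,3$). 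Since $W$ is independent of $\tau$, the right-hand side is a constant $c_0$, proving the claim.

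The only real subtlety is keeping track of symmetry hypotheses: one must verify both that $H$ (hence $\bar H$) is symmetric, which is exactly the content of Lemma~\ref{lemma_R0} and allows the identification $\bar Y^T\bar H Y=\bar Z^T Y$, and that $C,D$ are symmetric, which is needed for the Wronskian to be conserved. Everything else is routine linear-algebraic manipulation; no hard estimate is involved.
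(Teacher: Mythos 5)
Your argument is correct. The paper itself gives no proof for this lemma (it cites \cite[Lemma~3.3]{feizmohammadi2019timedependent}), and your Wronskian-conservation argument is precisely the standard proof of this identity in the Gaussian-beam literature, including the cited reference: one shows $W=\bar Y^T Z-\bar Z^T Y$ is conserved by the linear system because $C$ and $D$ are real symmetric, identifies $\bar Y^T(\Im H)Y=\tfrac{1}{2i}W$ using $H=ZY^{-1}$ and $H^T=H$, and takes determinants. All the intermediate identities you state (in particular $\bar Y^T\bar H Y=\bar Z^T Y$ via symmetry of $\bar H$ and $\bar H\bar Y=\bar Z$, and the nondegeneracy of $Y$ to justify dividing) check out, and the exponent $3$ in $(2i)^{-3}$ matches the size of the matrices $H,Y,Z$ here.
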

  We see that the matrix $Y(\tau)$ satisfies
  \begin{equation}\label{eq_Y}
  \frac{\mathrm{d}^2}{\mathrm{d}\tau^2}Y+CD Y=0,\quad Y(\tau_-)=Y_0,\quad \frac{\mathrm{d}}{\mathrm{d}\tau}Y(\tau_-)=CY_1.
  \end{equation}

As in \cite{feizmohammadi2019recovery}, we have the following estimate by the construction of $\widetilde{u}_\rho$
\begin{equation}\label{est_remainder}
(\|\square_g+q) \widetilde{u}_\rho\|_{H^k(M)}\leq C\rho^{-K},\qquad K=\frac{N+1-k}{2}-1.
\end{equation}

For the amplitude, we first notice that the principal term $\widetilde{a}_0$, which satisfies \eqref{S_cond2}, is independent of $q$. Setting $|\Theta|=0$ in \eqref{S_cond2}, one can get the equation for $\widetilde{a}_{0,0}$
\[
2\frac{\mathrm{d}}{\mathrm{d}\tau}\widetilde{a}_{0,0}+\mathrm{Tr}(CH)\widetilde{a}_{0,0}=0.
\]
By Lemma \ref{lemma_R0} and Lemma \ref{lemma_H0}, we have
\[
\mathrm{Tr}(CH)=\frac{\mathrm{d}}{\mathrm{d}\tau}\log\det Y.
\]
Therefore, we can take
\[
\widetilde{a}_{0,0}(\tau,0)=\det(Y(\tau))^{-\frac{1}{2}}.
\]
For more details, we refer to \cite{feizmohammadi2019recovery}. The terms $\widetilde{a}_{0,k}$, $j=1,2,\cdots, N$ can be constructed successively by solving linear first order ODEs resulted from $\eqref{S_cond2}$. This finishes the construction of $\widetilde{a}_0$.

For the construction of $\widetilde{a}_1$, we use equation \eqref{S_cond3}. Set $k=1$ and $|\Theta|=0$, then
\begin{equation}\label{eq_a1}
2\frac{\partial}{\partial\tau}\widetilde{a}_{1,0}(\tau,0)+\mathrm{Tr}(CH)\widetilde{a}_{1,0}(\tau,0)=-\mathrm{i}(\square_g+q)\widetilde{a}_0(\tau,0).
\end{equation}
Therefore we can take
\[
\widetilde{a}_{1,0}(\tau,0)=-\frac{\mathrm{i}}{2}\det(Y(\tau))^{-\frac{1}{2}}\int_{\tau_-}^\tau\left[(\square_g\widetilde{a}_0)(\tau',0)+q(\tau',0))\det(Y(\tau')^{\frac{1}{2}}\right]\mathrm{d}\tau'.
\]
Notice that $\widetilde{a}_{1,0}(\tau_-,0)=0$, and $\frac{\partial}{\partial\tau}\widetilde{a}_{1,0}(\tau_-,0)$ depends on $q$ only via $q(\tau_-,0)$. Inductively one can prove that $\frac{\partial^j}{\partial\tau^j}\widetilde{a}_{1,0}(\tau_-,0)$ depends on $q$ only via $\frac{\partial^k}{\partial\tau^k}q(\tau_-,0)$ for $k=0,1,\ldots,j$ for each $j\in \mathbb{N}$ by \eqref{eq_a1}. For any $|\Theta|=k$, $1\leq k\leq N$, we have
\begin{equation}\label{eq_a1k}
2\frac{\partial}{\partial\tau}\frac{\partial^\Theta}{\partial z^\Theta}\widetilde{a}_{1,k}(\tau,0)+\mathrm{Tr}(CH)\frac{\partial^\Theta}{\partial z^\Theta}\widetilde{a}_{1,k}(\tau,0)=\mathcal{E}_k,
\end{equation}
where $\mathcal{E}_k$ depends on $g,q,\varphi,\widetilde{a}_0$ along with their derivatives on $\gamma$, and $\frac{\partial^\alpha}{\partial z^\alpha}\widetilde{a}_{1}(\tau,0),|\alpha|\leq k-1$. We can take 
\[
\frac{\partial^\Theta}{\partial z^\Theta}\widetilde{a}_{1,k}(\tau_-,0)=0,
\]
and solve the equation \eqref{eq_a1k} for $\frac{\partial^\Theta}{\partial z^\Theta}\widetilde{a}_{1,k}(\tau_,0)$. This determines $\widetilde{a}_1$.
By a similar argument as above, we conclude that $\frac{\partial^j}{\partial\tau^j}\frac{\partial^\Theta}{\partial z^\Theta}\widetilde{a}_{1}(\tau_-,0)$ depends on $q$ only via the jets of $q$ at $(\tau_-,0)$, for any $j\in\mathbb{N}$. The functions $\widetilde{a}_2,\ldots, \widetilde{a}_N$ can be determined in a similar way.
This completes the construction of the Gaussian beam solution $\widetilde{u}_\rho$.

\noindent\textbf{Construction of Gaussian beams with reflections at the boundary. } Next, we take into account the reflections of Gaussian beams at the boundary $\partial M$.
Now assume $\gamma:[0,L]$ is a broken null-geodesic with $0<\mathbf{t}_1<\mathbf{t}_2<\cdots<\mathbf{t}_N<L$ the times of reflections at the boundary. We will construct Gaussian beam solutions concentrating near $\gamma$. Gaussian beams with reflections on Riemannian manifolds are constructed in \cite{kenig2014calderon}. We only give details about how to deal with the first reflection at time $\mathbf{t}_1$, and all the subsequent reflections can be dealt in the same way. Consider Gaussian beam solutions of the form
\[
\widetilde{u}_\rho=\widetilde{u}_\rho^{\mathrm{inc}}+\widetilde{u}_\rho^{\mathrm{ref}},
\]
where $\widetilde{u}_\rho^{\mathrm{inc}}$ is the Gaussian beam solution associated with the geodesic $\gamma\vert_{[0,\mathbf{t}_1]}$, and $\widetilde{u}_\rho^{\mathrm{ref}}$ is the Gaussian beam solution associated with the geodesic $\gamma\vert_{[\mathbf{t}_1,\mathbf{t}_2]}$. We will construct $\widetilde{u}_\rho^{\mathrm{inc}}$ and $\widetilde{u}_\rho^{\mathrm{ref}}$ such that $(\widetilde{u}_\rho^{\mathrm{inc}}+\widetilde{u}_\rho^{\mathrm{ref}})\vert_{\partial M}$ is small near $\gamma(\mathbf{t}_1)$.
Here we denote
\[
\widetilde{u}_\rho^{\mathrm{inc}}=e^{\mathrm{i}\rho\varphi^{\mathrm{inc}}}\widetilde{\mathfrak{a}}^{\mathrm{inc}}_\rho,\quad\quad \widetilde{u}_\rho^{\mathrm{ref}}=e^{\mathrm{i}\rho\varphi^{\mathrm{ref}}}\widetilde{\mathfrak{a}}^{\mathrm{ref}}_\rho,
\]
both of the form \eqref{phi_a}.
 We can take $\varphi^{\mathrm{ref}}\vert_{\partial M}=\varphi^{\mathrm{inc}}\vert_{\partial M}$ up to $N$-th order at $\gamma(\mathbf{t}_1)\in\partial M$. In particular, we have
\[
\begin{split}
\mathrm{d}(\varphi^{\mathrm{inc}}\vert_{\partial M})\vert_{\gamma(\mathbf{t}_1)}=\dot{\gamma}(\mathbf{t}_1-)^\flat\vert_{T\partial M},\\
\mathrm{d}(\varphi^{\mathrm{ref}}\vert_{\partial M})\vert_{\gamma(\mathbf{t}_1)}=\dot{\gamma}(\mathbf{t}_1+)^\flat\vert_{T\partial M}.
\end{split}
\]
We can also choose $\widetilde{\mathfrak{a}}^{\mathrm{ref}}\vert_{\partial M}=-\widetilde{\mathfrak{a}}^{\mathrm{inc}}\vert_{\partial M}$ up to $N$-th order at $\gamma(\mathbf{t}_1)$.

Let $R_1$ be a small neighborhood of $\gamma(\mathbf{t}_1)$ on $\partial M$ such that $\widetilde{u}_\rho^{\mathrm{inc}}$ and $\widetilde{u}_\rho^{\mathrm{ref}}$ are compactly supported in $R_1$. Let $(y,x_n)$ be the coordinates near $\gamma(\mathbf{t}_1)$ such that $\partial M$ is parametrized by $y\mapsto (y,x_n)$. First notice that on $R_1$,
\[
|e^{\mathrm{i}\rho\varphi^\bullet}|\leq e^{-C'\rho|y|^2}.
\]
By above considerations, we have
\[
|\varphi^{\mathrm{ref}}-\varphi^{\mathrm{inc}}|\leq C|y|^{N+1},
\]
and
\[
|\widetilde{\mathfrak{a}}_\rho^{\mathrm{ref}}+\widetilde{\mathfrak{a}}^{\mathrm{inc}}_\rho|\leq C|y|^{N+1}
\]
on $R_1$.
We can write $\widetilde{u}_\rho$ as
\[
\widetilde{u}_\rho=(e^{\mathrm{i}\rho\varphi^{\mathrm{inc}}}-e^{\mathrm{i}\rho\varphi^{\mathrm{ref}}})\widetilde{\mathfrak{a}}_\rho^{\mathrm{inc}}+e^{\mathrm{i}\rho\varphi^{\mathrm{ref}}}(\widetilde{\mathfrak{a}}_\rho^{\mathrm{ref}}+\widetilde{\mathfrak{a}}_\rho^{\mathrm{inc}}).
\]
We have
\[
e^{\mathrm{i}\rho\varphi^{\mathrm{inc}}}-e^{\mathrm{i}\rho\varphi^{\mathrm{ref}}}=\mathrm{i}\rho(\varphi^{\mathrm{inc}}-\varphi^{\mathrm{ref}})\int_0^1e^{\mathrm{i}\rho(s\varphi^{\mathrm{inc}}+(1-s)\varphi^{\mathrm{ref}})}\mathrm{d}s
\]
and consequently
\[
|e^{\mathrm{i}\rho\varphi^{\mathrm{inc}}}-e^{\mathrm{i}\rho\varphi^{\mathrm{ref}}}|\leq C\rho |y|^{N+1}e^{-C'\rho|y|^2}.
\]
Thus we obtain that
\[
\vert\widetilde{u}_\rho\vert_{R_1}\vert\leq C\rho |y|^{N+1}e^{-C'\rho|y|^2}.
\]

Taking the first derivative of $\widetilde{u}_\rho$ in $y$, we have
\[
D_y\widetilde{u}_\rho=\mathrm{i}\rho e^{\mathrm{i}\rho\varphi^{\mathrm{inc}}}D_y\varphi^{\mathrm{inc}}\widetilde{\mathfrak{a}}^{\mathrm{inc}}_\rho+e^{\mathrm{i}\rho\varphi^{\mathrm{inc}}}D_y\widetilde{\mathfrak{a}}^{\mathrm{inc}}_\rho+\mathrm{i}\rho e^{\mathrm{i}\rho\varphi^{\mathrm{ref}}}D_y\varphi^{\mathrm{ref}}\widetilde{\mathfrak{a}}^{\mathrm{ref}}_\rho+e^{\mathrm{i}\rho\varphi^{\mathrm{ref}}}D_y\widetilde{\mathfrak{a}}^{\mathrm{ref}}_\rho.
\]
Notice that $|D_y\varphi^{\mathrm{inc}}|\leq C|y|$ on $R_1$, we obtain the estimate
\[
\vert D_y\widetilde{u}_\rho\vert_{R_1}\vert\leq C\rho  |y|^{N+2}e^{-C'\rho|y|^2}+C|y|^{N}e^{-C'\rho|y|^2}.
\]
Inductively, one can show that 
\[
\left\vert\partial^\alpha_y\widetilde{u}_\rho\vert_{R_1}\right\vert\leq C\sum_{k+j=|\alpha|}\rho^{k}|y|^ke^{-C'\rho|y|^2}|y|^{N+1-j}.
\]
By change of variables $y=\rho^{-1/2}y'$, we obtain
\[
\int_{R_1}|\partial^\alpha_y\widetilde{u}_\rho\vert^2\mathrm{d}S\mathrm{d}t\leq C \rho^{-(N-|\alpha|+1)-\frac{3}{2}},
\]
and therefore
\begin{equation}\label{u_tilde_R1}
\|\widetilde{u}_\rho\|_{H^k(R_1)}\leq C\rho^{-\frac{N-k+1}{2}-\frac{3}{4}}.
\end{equation}

We can also construct the Gaussian beam solution associated with the same broken null-geodesic $\gamma$ for the equation $\square_gu=0$ by just setting $q=0$ in the above process. Considering only the reflection at $\gamma(\mathbf{t}_1)$, we denote the solution to be
\[
u_\rho=u_\rho^{\mathrm{inc}}+u_\rho^{\mathrm{ref}}.
\]

Since the incidence waves are what will be used explicitly, we shall denote
\[
\widetilde{u}_\rho^{\mathrm{inc}}(x)=e^{\mathrm{i}\rho\varphi^{\mathrm{inc}}(x)}\widetilde{\mathfrak{a}}^{\mathrm{inc}}_\rho(x),\quad u_\rho^{\mathrm{inc}}(x)=e^{\mathrm{i}\rho\varphi^{\mathrm{inc}}(x)}\mathfrak{a}^{\mathrm{inc}}_\rho(x),
\]
with
\[
\widetilde{\mathfrak{a}}^{\mathrm{inc}}_\rho(\tau,z')=\chi\left(\frac{|z'|}{\delta}\right)\sum_{k=0}^N\rho^{-k}\widetilde{a}_k(\tau,z'),\quad
\widetilde{a}_k(\tau,z')=\sum_{j=0}^N\widetilde{a}_{k,j}(\tau,z').
\]
\[
\mathfrak{a}^{\mathrm{inc}}_\rho(\tau,z')=\chi\left(\frac{|z'|}{\delta}\right)\sum_{k=0}^N\rho^{-k}a_k(\tau,z'),\quad
a_k(\tau,z')=\sum_{j=0}^Na_{k,j}(\tau,z').
\]
In above notations we have used the fact that the phase function $\varphi^{\mathrm{inc}}$, since independent of $q$, is the same for $\widetilde{u}^{\mathrm{inc}}_\rho$ and $u^{\mathrm{inc}}_\rho$. 
We remark here that the following estimate, analogous to \eqref{u_tilde_R1}, also holds for $u_\rho$,
\begin{equation}\label{u_R1}
\|u_\rho\|_{H^k(R_1)}\leq C\rho^{-\frac{N-k+1}{2}-\frac{3}{4}}.
\end{equation}
Also we have $\widetilde{a}_0=a_0$, as they are also independent of $q$.
Now we have
\[
a_{1}(\tau,0)=a_{1,0}(\tau,0)=-\frac{\mathrm{i}}{2}\det(Y(\tau))^{-\frac{1}{2}}\int_{\mathbf{t}_0}^\tau\left[(\square_ga_0)(\tau',0)\det(Y(\tau'))^{\frac{1}{2}}\right]\mathrm{d}\tau'.
\]

Recall that the jet of $q$ vanishes at $\gamma(\mathbf{t}_0)\in\partial M$. By above discussion, $(\widetilde{\mathfrak{a}}_\rho-\mathfrak{a}_\rho)\vert_{\partial M}$ vanishes at $\gamma(\mathbf{t}_0)$ up to $N$-th order. Then we have
\[
\left\vert\partial^\alpha_y(\widetilde{u}_\rho-u_\rho)\right\vert\leq C\sum_{k+j=|\alpha|}\rho^{k}|y|^ke^{-C'\rho|y|^2}|y|^{N+1-j}.
\]
Similar as \eqref{u_tilde_R1} and \eqref{u_R1}, we have
\begin{equation}\label{u_difference_R0}
\|\widetilde{u}_\rho-u_\rho\|_{H^k(R_0)}\leq C\rho^{-\frac{N-k+1}{2}-\frac{3}{4}},
\end{equation}
where $R_0$ is a small neighborhood of $\gamma(\mathbf{t}_0)$ on $\partial M$, and $\widetilde{u}_\rho\vert_{\partial M},u_\rho\vert_{\partial M}$ are supported in $R_0$.
Combining the estimates \eqref{u_tilde_R1}, \eqref{u_R1}, and \eqref{u_difference_R0}, we finally get
\[
\|\widetilde{u}_\rho-u_\rho\|_{H^k(\partial M)}\leq C\rho^{-\frac{N-k+1}{2}-\frac{3}{4}}.
\]
In conclusion, for any $K\geq 0$, we can take $N$ large enough such that
\[
\|\widetilde{u}_\rho-u_\rho\|_{H^k(\partial M)}\leq C\rho^{-K},
\]
which is valid with an arbitrary number of reflections of $\gamma$.
\subsection{Proof of Lemma \ref{lemma_q}}
For any point $x\in\partial M$, and an inward pointing null-vector $\xi\in L^{+}_xM_1$, consider the broken null-geodesic $\gamma=\gamma_{x,\xi}$ starting from $(x,\xi)$. By the discussions above, we can construct boundary sources $f_{\rho,x,\xi}$ such that the solutions $\widetilde{u}$, $u$ to the equations
\begin{equation}\label{eq_tildeu}
\begin{alignedat}{2}
  (\Box_g +q)\widetilde{u} &= 0, &\quad & \text{on }M,\\
  \widetilde{u} &= f_{\rho,x,\xi}(x), &\quad& \text{on }\partial M,\\
  \widetilde{u}(\ft,x') &= 0, &\quad& \ft<0,
\end{alignedat}
\end{equation}
and
\begin{equation}\label{eq_u}
\begin{alignedat}{2}
  \Box_gu &= 0, &\quad & \text{on }M,\\
  u &= f_{\rho,x,\xi}(x), &\quad& \text{on }\partial M,\\
 u(\ft,x') &= 0, &\quad& \ft<0,
\end{alignedat}
\end{equation}
satisfy
\[
\widetilde{u}=\widetilde{u}_\rho+\widetilde{R}_\rho,\quad\quad\quad\quad u=u_\rho+R_\rho,
\]
where $\widetilde{u}_\rho,u_\rho$ are the Gaussian beam solutions associated with the broken null-geodesic $\gamma_{x,\xi}$ constructed above, and
\begin{equation}\label{remainder1}
\|f_{\rho,x,\xi}(x)-\widetilde{u}_\rho\|_{H^k(\partial M)}=\mathcal{O}(\rho^{-K}),\quad\|f_{\rho,x,\xi}(x)-u_\rho\|_{H^k(\partial M)}=\mathcal{O}(\rho^{-K}).
\end{equation}
For example, one can take $f_{\rho,x,\xi}=\widetilde{u}_\rho\vert_{\partial M}$ or $f_{\rho,x,\xi}=u_\rho\vert_{\partial M}$, and let the remainder terms be the solutions to the equations
\[
\begin{alignedat}{2}
  (\Box_g +q)\widetilde{R}_\rho &= -(\Box_g +q)\widetilde{u}_\rho, &\quad & \text{on }M,\\
  \widetilde{R}_\rho &= f_{\rho,x,\xi}(x)-\widetilde{u}_\rho, &\quad& \text{on }\partial M,\\
  \widetilde{R}_\rho(\ft,x') &= 0, &\quad& \ft<0,
\end{alignedat}
\]
and
\[
\begin{alignedat}{2}
  \Box_gR_\rho &= -(\Box_g +q)u_\rho, &\quad & \text{on }M,\\
  R_\rho &= f_{\rho,x,\xi}(x)-u_\rho(x), &\quad& \text{on }\partial M,\\
 R_\rho(\ft,x') &= 0, &\quad& \ft<0,
\end{alignedat}
\]
The parameter $\delta$ in \eqref{phi_a} can be taken small enough to ensure that $\widetilde{u}_\rho=0$, $u_\rho=0$ near $\{t=0\}$.
By \eqref{est_remainder}, we have
\begin{equation}\label{remainder2}
\|(\Box_g +q)\widetilde{u}_\rho\|_{H^k(M)}=\mathcal{O}(\rho^{-K}),\quad\|\Box_g u_\rho\|_{H^k(M)}=\mathcal{O}(\rho^{-K}).
\end{equation}
By \eqref{remainder1} and \eqref{remainder2}, we have
\[
\|\widetilde{R}_\rho\|_{H^{k+1}(M)},\ \|R_\rho\|_{H^{k+1}(M)}\leq C\rho^{-K}.
\]

\medskip

Fix a point $q_0\in\mathbb{U}_g$. There exist $\theta_0,\theta_1\in L^{*,+}_{q_0}M$ and real numbers $s^-(q_0,\theta_1)<0<s^+(q_0,\theta_0)$ such that (cf. \cite{hintz2020inverse} or the arguments in~\S\ref{SsConf})
\begin{equation}
\label{Eqxminus}
x_1=\gamma_{q_0,\theta_1^\sharp}(s^-(q_0,\theta_1))\in (0,T)\times\partial N,\quad x_0=\gamma_{q_0,\theta_0^\sharp}(s^+(q_0,\theta_0))\in (0,T)\times\partial N,
\end{equation}
Denote
\[
\begin{split}
\xi_1&=\dot\gamma_{q_0,\theta_1^\sharp}(s^-(q_0,\theta_1))\in L^{+}_{x_1}M_1,\\
\xi_0&=-\dot\gamma_{q_0,\theta_0^\sharp}(s^+(q_0,\theta_0))\in L^{-}_{x_0}M_1.
\end{split}
\]
We can choose $\theta_0,\theta_1$ such that the two broken null geodesics $\gamma_{x_1,\xi_1}$ (starting from $x_1$ going forward in $\ft$) and $\gamma_{x_0,\xi_0}$ (starting from $x_0$ going backward in $\ft$) intersect only at $q_0$. Indeed, we claim that may arrange this by choosing $x_0,x_1$ as in the proof of Lemma~\ref{LemmaGeod} and using a small perturbation argument: the lemma already shows that they cannot intersect once more along $\gamma_{x_1,\xi_1}$ between $x_1$ and $q_0$ unless they are the same curves. This situation however can be avoided by perturbing $\theta_0$ slightly.

Choose local coordinates so that $g$ coincides with the Minkowski metric at $q_0$. Without loss of generality, one can then assume
\[
\theta_0=(-1,\pm\sqrt{1-r_0^2},r_0,0),\quad\quad \theta_1=(-1,1,0,0),
\]
for some $r_0\in [-1,1]$ but with $\theta_0\neq\theta_1$. Take a small parameter $\varsigma>0$ and introduce two perturbations of $\xi^{(1)}$
\[
\theta_2=(-1,\sqrt{1-\varsigma^2},\varsigma,0),\quad\quad \theta_3=(-1,\sqrt{1-\varsigma^2},-\varsigma,0).
\]
Thus, $\theta_2,\theta_3\in L^{*,+}_{q_0}M_1$. One can then write $\theta_0$ as a linear combination of $\theta_1,\theta_2,\theta_3$,
\[
\theta_0=\alpha_1\theta_1+\alpha_2\theta_2+\alpha_3\theta_3,
\]
with
\[
\alpha_1=\frac{-\sqrt{1-\varsigma^2}\pm\sqrt{1-r_0^2}}{1-\sqrt{1-\varsigma^2}},\quad \alpha_2=\frac{1\mp\sqrt{1-r_0^2}}{2(1-\sqrt{1-\varsigma^2})}+\frac{r_0}{2\varsigma},\quad \alpha_3=\frac{1\mp\sqrt{1-r_0^2}}{2(1-\sqrt{1-\varsigma^2})}-\frac{r_0}{2\varsigma}.
\]
We refer to \cite{hintz2020inverse} for more details.
Therefore, for $\kappa_0=1$ and $\kappa_j=-\alpha_j$, $j=1,2,3$, we have
\begin{equation}\label{kappa_linear}
\kappa_0\theta_0+\kappa_1\theta_1+\kappa_2\theta_2+\kappa_3\theta_3=0.
\end{equation}
By possibly perturbing $\theta_0$, we can assume that $\theta_0$ and $\theta_1$ are linearly independent. Then for $j=1,2,3$, $\kappa_j\neq 0$ and
\[
\lim_{\varsigma\rightarrow 0}|\kappa_j|=+\infty.
\]

For $j=2,3$, denote 
\[
x_j=\gamma_{q_0,\theta_j}(s^-(q_0,\theta_j)),
\]
\[
\xi_j=\dot\gamma_{q_0,\theta_j}(s^-(q_0,\theta_j))\in L^{+}_{x_j}M_1.
\]
Note that we can choose $\varsigma$ small enough so that $x_j\in (0,T)\times\partial N$ for $j=2,3$.

Now denote $\gamma^{(j)}=\gamma_{x_j,\xi_j}$ for $j=0,1,2,3$, to be the broken null-geodesics with $\gamma^{(j)}(0)=x_j$, $\dot{\gamma}^{(j)}(0)=\xi_j$ and $\gamma^{(j)}(s_j)=q_0$ for some $s_j>0$. For $j=1,2,3$, we define $\widetilde{u}_\rho^{(j)},u_\rho^{(j)}$ to be the Gaussian beam solutions which, before the first reflection, are of the form
\[
\begin{split}
\widetilde{u}_\rho^{(1)}=e^{\mathrm{i}\kappa_1\rho\varphi^{(1)}}\widetilde{\mathfrak{a}}^{(1)}_{\kappa_1\rho},\quad\widetilde{u}_\rho^{(2)}=e^{\mathrm{i}\kappa_2\rho\varphi^{(2)}}\widetilde{\mathfrak{a}}^{(2)}_{\kappa_2\rho},\quad\widetilde{u}_\rho^{(3)}=e^{\frac{\mathrm{i}}{2}\kappa_3\rho\varphi^{(3)}}\widetilde{\mathfrak{a}}^{(3)}_{\kappa_3\rho/2};\\
u_\rho^{(1)}=e^{\mathrm{i}\kappa_1\rho\varphi^{(1)}}\mathfrak{a}^{(1)}_{\kappa_1\rho},\quad u_\rho^{(2)}=e^{\mathrm{i}\kappa_2\rho\varphi^{(2)}}\mathfrak{a}^{(2)}_{\kappa_2\rho},\quad u_\rho^{(3)}=e^{\frac{\mathrm{i}}{2}\kappa_3\rho\varphi^{(3)}}\mathfrak{a}^{(3)}_{\kappa_3\rho/2}.
\end{split}
\]
Notice the $\frac{1}{2}$-factor which we put into the definition of $\widetilde{u}^{(3)}_\rho$ and $u^{(3)}_\rho$. For $j=1,2,3$, we can construct $\widetilde{u}^{(j)}=\widetilde{u}_\rho^{(j)}+\widetilde{R}_\rho^{(j)}$ and $u^{(j)}=u_\rho^{(j)}+R_\rho^{(j)}$ to be the solutions to \eqref{eq_tildeu} and \eqref{eq_u} with $f_{\rho,x,\xi}$ taken to be $f^{(j)}:=f_{\rho,x_j,\xi_j}=u_\rho^{(j)}\vert_{\partial M}$. We emphasize that $\widetilde{u}^{(j)}$ and $u^{(j)}$ satisfy the same boundary Dirichlet boundary conditions $\widetilde{u}^{(j)}\vert_{\partial M}=u^{(j)}\vert_{\partial M}=f^{(j)}$. 

Similarly, we can construct Gaussian beams of the form
\[
\widetilde{u}_\rho^{(0)}=e^{\mathrm{i}\kappa_0\rho\varphi^{(0)}}\widetilde{\mathfrak{a}}^{(0)}_{\kappa_0\rho},\quad u_\rho^{(0)}=e^{\mathrm{i}\kappa_0\rho\varphi^{(0)}}\mathfrak{a}^{(0)}_{\kappa_0\rho},
\]
before the first reflection, concentrating near $\gamma^{(0)}$. Then we can construct remainder terms $\widetilde{R}_\rho^{(0)}$, $R_\rho^{(0)}$ such that $\widetilde{u}^{(0)}=\widetilde{u}_\rho^{(0)}+\widetilde{R}_\rho^{(0)}$ and $u^{(0)}=u_\rho^{(0)}+R_\rho^{(0)}$ satisfy the backward wave equations
\begin{equation}\label{eq_tildeu0}
\begin{alignedat}{2}
  (\Box_g +q)\widetilde{u}^{(0)} &= 0, &\quad & \text{on }M,\\
  \widetilde{u}^{(0)} &= f^{(0)}(x), &\quad& \text{on }\partial M,\\
  \widetilde{u}^{(0)}(\ft,x') &= 0, &\quad& \ft>T,
\end{alignedat}
\end{equation}
and
\begin{equation}\label{eq_u0}
\begin{alignedat}{2}
  \Box_gu^{(0)} &= 0, &\quad & \text{on }M,\\
  u^{(0)} &= f^{(0)}(x), &\quad& \text{on }\partial M,\\
 u^{(0)}(\ft,x') &= 0, &\quad& \ft>T.
\end{alignedat}
\end{equation}
Here $f^{(0)}=u_\rho^{(0)}\vert_{\partial M}$, and $\delta$ is taken to be small enough such that $u_\rho^{(0)}=0$ near $\{\ft=T\}$.

We are now in a position to prove Proposition \ref{conf_uniqueness}. Notice that a fourth order linearization of the DN maps $\Lambda_{g,q,e^\beta\widetilde{a}}(f)=\Lambda_{g,0,a}(f)$, with $f=\epsilon_1f^{(1)}+\epsilon_2f^{(2)}+\epsilon_3f^{(3)}+\epsilon_4f^{(3)}$ (notice the repetition of $f^{(3)}$), leads to the integral identity
\[
\begin{split}
&\int_0^T\int_{\partial N}\frac{\partial^4}{\partial\epsilon_1\partial\epsilon_2\partial\epsilon_3\partial\epsilon_4}\Lambda_{g,q,e^\beta\widetilde{a}}(\epsilon_1f^{(1)}+\epsilon_2f^{(2)}+\epsilon_3f^{(3)}+\epsilon_4f^{(3)})f^{(0)}\mathrm{d}S\mathrm{d}t\\
=&\int_0^T\int_{\partial N}\frac{\partial^4}{\partial\epsilon_1\partial\epsilon_2\partial\epsilon_3\partial\epsilon_4}\Lambda_{g,0,a}(\epsilon_1f^{(1)}+\epsilon_2f^{(2)}+\epsilon_3f^{(3)}+\epsilon_4f^{(3)})f^{(0)}\mathrm{d}S\mathrm{d}t.
\end{split}
\]
and consequently
\begin{equation}\label{II_eq}
\widetilde{\mathcal{I}}:=\int_{M}e^{\beta}\widetilde{a}\widetilde{u}^{(1)}\widetilde{u}^{(2)}(\widetilde{u}^{(3)})^2\widetilde{u}^{(0)}\mathrm{d}V_g=\int_{M}au^{(1)}u^{(2)}(u^{(3)})^2u^{(0)}\mathrm{d}V_g=:\mathcal{I},
\end{equation}
via integration by parts.
Observe that
\begin{equation}\label{station1}
\begin{split}
&\rho^2\widetilde{\mathcal{I}}=\rho^2\int_M e^{\beta}\widetilde{a}e^{\mathrm{i}\rho S}\chi_1\chi_2\chi_3^2\chi_0(a_0^{(1)}+\rho^{-1}\kappa_1^{-1}\widetilde{a}_1^{(1)})(a_0^{(2)}+\rho^{-1}\kappa_2^{-1}\widetilde{a}_1^{(2)})(a_0^{(3)}+2\rho^{-1}\kappa_3^{-1}\widetilde{a}_1^{(3)})^2\\
&\hspace{8em}\times(a_0^{(0)}+\rho^{-1}\kappa_0^{-1}\widetilde{a}_1^{(0)})\mathrm{d}V_g+\mathcal{O}(\rho^{-2}).
\end{split}
\end{equation}
Here $\chi_j$ is the cutoff function associated with $\widetilde{u}_\rho^{(j)}(u_\rho^{(j)})$ as in \eqref{phi_a}.
The multiplication by $\rho^2$, resp.\ the $\mathcal{O}(\rho^{-2})$ remainder estimate are motivated by, resp.\ follow from the fact that $\int_M \vert e^{\mathrm{i}\rho S}\vert\mathrm{d}V_g=\mathcal{O}(\rho^{-2})$ and stationary phase, see~\eqref{EqStat} below. Here, we use that the phase function
\[
S:=\kappa_0\varphi^{(0)}+\kappa_1\varphi^{(1)}+\kappa_2\varphi^{(2)}+\kappa_3\varphi^{(3)},
\]
which satisfies the following properties:
\begin{lemma}[\text{\cite[Lemma 5]{feizmohammadi2019recovery}}]
 The function $S$ is well-defined in a neighborhood of $q_0$ and
 \begin{enumerate}
 \item $S(q_0)=0$;
 \item $\nabla S(q_0)=0$;
 \item $\Im S(x)\geq cd(x,q_0)^2$ for $x$ in a neighborhood of $q_0$, where $c>0$ is a constant.
 \end{enumerate}
 \end{lemma}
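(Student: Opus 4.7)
My plan is to verify each claim in the Fermi coordinate charts used to construct the Gaussian beam phases. Each chart contains a neighborhood of its associated $\gamma^{(j)}\ni q_0$, so the sum $S$ is well-defined on the intersection, a neighborhood of $q_0$. In the Fermi chart $(z^0,z^1,z^2,z^3)$ centered on $\gamma^{(j)}$, the expansion $\varphi^{(j)}=z^1+\varphi_2^{(j)}(z^0,z')+O(|z'|^3)$ vanishes identically along $\gamma^{(j)}$, so $\varphi^{(j)}(q_0)=0$ and hence $S(q_0)=0$; this gives (1). For (2), the normalization $g_{01}=1$, $g_{00}=g_{02}=g_{03}=0$ on $\gamma^{(j)}$ characteristic of null Fermi coordinates yields $d\varphi^{(j)}|_{\gamma^{(j)}}=dz^1=\dot\gamma^{(j)}(\tau)^\flat$; at $q_0$ this covector equals $\theta_j$ (with the appropriate orientation convention for the backward-propagating beam $j=0$). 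Summing and invoking \eqref{kappa_linear} gives $\nabla S(q_0)=\sum_j\kappa_j\theta_j=0$.

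For (3), I would expand $\Im\varphi^{(j)}$ to second order at $q_0$. In the Fermi chart for $\gamma^{(j)}$, $\Im\varphi^{(j)}=\Im H^{(j)}(\tau_j)_{ik}z^i z^k+O(|z'|^3)$, where $\Im H^{(j)}(\tau_j)$ is a real symmetric $3\times 3$ matrix of fixed sign by Lemma~\ref{lemma_R0}. Viewed as a quadratic form on $T_{q_0}M_1$, the Hessian of $\Im\varphi^{(j)}$ is semidefinite with kernel exactly $\mathrm{span}(\dot\gamma^{(j)}(\tau_j))$. The key flexibility is that in constructing each $\varphi^{(j)}$ one may take $\Im H_0^{(j)}$ positive or negative definite---passing to the complex conjugate of a Riccati solution flips the sign of the imaginary part while preserving all other features of the construction, so the analogue of Lemma~\ref{lemma_R0} holds in both cases. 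I would fix this choice so that $\mathrm{sign}(\Im H_0^{(j)})=\mathrm{sign}(\kappa_j)$, recalling $\kappa_0=1$; then each $\kappa_j\Im\varphi^{(j)}$ is a non-negative quadratic form to second order at $q_0$, and $\Im S$ is a sum of such forms plus a cubic remainder.

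To conclude (3), the Hessian of $\Im S$ at $q_0$ must be \emph{positive} definite rather than merely semidefinite. Any $v\in T_{q_0}M_1$ in its kernel must lie in the kernel of each non-negative summand, hence in $\mathrm{span}(\dot\gamma^{(j)}(\tau_j))$ for every $j=0,1,2,3$. The perturbation $\varsigma>0$ used in the choice of $\theta_2,\theta_3$ ensures that the four vectors $\dot\gamma^{(j)}(\tau_j)$ are pairwise non-proportional null vectors in $T_{q_0}M_1$; no common one-dimensional subspace lies in all four spans, forcing $v=0$. Taylor's theorem together with claims (1) and (2) then yields $\Im S(x)\ge c\,d(x,q_0)^2$ in a small neighborhood of $q_0$. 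The main obstacle is precisely the sign-alignment step in the previous paragraph: without matching $\mathrm{sign}(\Im H_0^{(j)})$ to $\mathrm{sign}(\kappa_j)$, individual terms in $\Im S$ could contribute negatively and the coercivity would fail; once this is done, the remainder of the argument reduces to elementary linear algebra and standard Taylor estimates.
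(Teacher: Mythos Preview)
The paper does not give its own proof; the lemma is quoted from \cite{feizmohammadi2019recovery}, and your argument follows exactly the standard route used there: (1) from $\varphi^{(j)}|_{\gamma^{(j)}}=0$, (2) from $d\varphi^{(j)}|_{q_0}=\theta_j$ together with \eqref{kappa_linear}, and (3) from summing positive semidefinite Hessians whose one-dimensional kernels $\mathrm{span}(\dot\gamma^{(j)}(\tau_j))$ are pairwise distinct.

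One small clarification on the sign alignment you describe as a ``flexibility'': it is in fact forced by the Gaussian beam property itself. For $e^{i\kappa_j\rho\varphi^{(j)}}$ to be bounded (indeed, Gaussian-concentrated on $\gamma^{(j)}$) as $\rho\to+\infty$, one must have $\kappa_j\,\Im\varphi^{(j)}\geq 0$ off the geodesic, i.e.\ $\operatorname{sign}(\Im H^{(j)})=\operatorname{sign}(\kappa_j)$. The paper's presentation, which states $\Im H>0$ unconditionally while using parameters $\kappa_j\rho$ of varying sign, is slightly loose on exactly this point; the cited reference handles it as you indicate. With that understood, your proof is complete and matches the reference.
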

Similarly, we have
\begin{equation}\label{station2}
\begin{split}
&\rho^2\mathcal{I}=\rho^2\int_M ae^{\mathrm{i}\rho S}\chi_1\chi_2\chi_3^2\chi_0(a_0^{(1)}+\rho^{-1}\kappa_1^{-1}a_1^{(1)})(a_0^{(2)}+\rho^{-1}\kappa_2^{-1}a_1^{(2)})(a_0^{(3)}+2\rho^{-1}\kappa_3^{-1}a_1^{(3)})^2\\
&\hspace{8em}\times(a_0^{(0)}+\rho^{-1}\kappa_0^{-1}a_1^{(0)})\mathrm{d}V_g+\mathcal{O}(\rho^{-2}).
\end{split}
\end{equation}

By \eqref{II_eq}, \eqref{station1}, and \eqref{station2}, we obtain
\[
\rho^2\int_M e^{\beta}\widetilde{a}\chi_1\chi_2\chi_3^2\chi_0e^{\mathrm{i}\rho S}a_0^{(1)}a_0^{(2)}(a_0^{(3)})^2a_0^{(0)}\mathrm{d}V_g=\rho^2\int_M ae^{\mathrm{i}\rho S}\chi_1\chi_2\chi_3^2\chi_0a_0^{(1)}a_0^{(2)}(a_0^{(3)})^2a_0^{(0)}\mathrm{d}V_g+\mathcal{O}(\rho^{-1}).
\]
Since $\gamma^{(j)}$, $j=0,1,2,3$ only intersect at $q_0$, the function $\chi_1\chi_2\chi_3^2\chi_0$ is supported in a small neighborhood of $q_0$ and is equal to $1$ at $q_0$.
Using the method of stationary phase, we have
\begin{equation}
\label{EqStat}
\rho^2\int_M e^{\beta}\widetilde{a}e^{\mathrm{i}\rho S}\chi_1\chi_2\chi_3^2\chi_0a_0^{(1)}a_0^{(2)}(a_0^{(3)})^2a_0^{(0)}\mathrm{d}V_g=ce^{\beta(q_0)}\widetilde{a}(q_0)+\mathcal{O}(\rho^{-1}),
\end{equation}
and
\[
\rho^2\int_M ae^{\mathrm{i}\rho S}\chi_1\chi_2\chi_3^2\chi_0a_0^{(1)}a_0^{(2)}(a_0^{(3)})^2a_0^{(0)}\mathrm{d}V_g=ca(q_0)+\mathcal{O}(\rho^{-1})
\]
with some constant $c\neq 0$.
Thus, by letting $\rho\rightarrow+\infty$ we conclude that
\[
e^{\beta(q_0)}\widetilde{a}(q_0)=a(q_0).
\]
Since $q_0$ can be any point in $\mathbb{U}_g$, we have $\widetilde{a}=e^{-\beta}a$ in $\mathbb{U}_g$.

Next, we consider the recovery of $q$. We will adapt the arguments of \cite{feizmohammadi2019recovery}, where a similar problem is considered. We exclude the principal terms in \eqref{station1} and \eqref{station2} and get, upon multiplying by $\rho$,
\[
\begin{split}
\rho^2\int_M e^{\beta}\widetilde{a}e^{\mathrm{i}\rho S}\chi_1\chi_2\chi_3^2\chi_0\Big(&\kappa_1^{-1}\widetilde{a}_1^{(1)}a_0^{(2)}(a_0^{(3)})^2a_0^{(0)}+\kappa_2^{-1}\widetilde{a}_1^{(2)}a_0^{(1)}(a_0^{(3)})^2a_0^{(0)}\\
&\quad\quad+4\kappa_3^{-1}\widetilde{a}_1^{(3)}a_0^{(1)}a_0^{(2)}a_0^{(3)}a_0^{(0)}+\kappa_0^{-1}\widetilde{a}_1^{(0)}a_0^{(1)}a_0^{(2)}(a_0^{(3)})^2\Big)\\
=\rho^2\int_M ae^{\mathrm{i}\rho S}\chi_1\chi_2\chi_3^2\chi_0\Big(&\kappa_1^{-1}a_1^{(1)}a_0^{(2)}(a_0^{(3)})^2a_0^{(0)}+\kappa_2^{-1}a_1^{(2)}a_0^{(1)}(a_0^{(3)})^2a_0^{(0)}\\
&\quad\quad+4\kappa_3^{-1}a_1^{(3)}a_0^{(1)}a_0^{(2)}a_0^{(3)}a_0^{(0)}+\kappa_0^{-1}a_1^{(0)}a_0^{(1)}a_0^{(2)}(a_0^{(3)})^2\Big)+\mathcal{O}(\rho^{-1}).
\end{split}
\]
Letting $\rho\rightarrow+\infty$ and applying the method of stationary phase again, we obtain
\[
\begin{split}
&e^{\beta(q_0)}\widetilde{a}(q_0)\left(\frac{c_{1,1}}{\kappa_1}\widetilde{a}_1^{(1)}(q_0)+\frac{c_{1,2}}{\kappa_2}\widetilde{a}_1^{(2)}(q_0)+\frac{c_{1,3}}{\kappa_3}\widetilde{a}_1^{(3)}(q_0)+\frac{c_{1,0}}{\kappa_0}\widetilde{a}_1^{(0)}(q_0)\right)\\
&\quad=a(q_0)\left(\frac{c_{1,1}}{\kappa_1}a_1^{(1)}(q_0)+\frac{c_{1,2}}{\kappa_2}a_1^{(2)}(q_0)+\frac{c_{1,3}}{\kappa_3}a_1^{(3)}(q_0)+\frac{c_{1,0}}{\kappa_0}a_1^{(0)}(q_0)\right).
\end{split}
\]
where $c_{1,j}$, $j=0,1,2,3$, are non-zero constants that do not depend on $q$ (thus do not depend on $\beta$). For more details, we refer to \cite{feizmohammadi2019recovery}. Recall now that $e^{\beta(q_0)}\widetilde{a}(q_0)=a(q_0)$ and
\[
\widetilde{a}_{1}^{(j)}(q_0)-a_{1}^{(j)}(q_0)=-\frac{\mathrm{i}}{2}\det(Y(s_j))^{-\frac{1}{2}}\int_{0}^{s_j}q(\gamma^{(j)}(s))\mathrm{d}s
\]
for $j=0,1,2,3$. Arguing as in \cite{feizmohammadi2019recovery}, one can then obtain
\[
\sum_{j=0}^3\frac{c_{1,j}}{\kappa_j}\int_0^{s_j}q(\gamma^{(j)}(s))\mathrm{d}s=0.
\]
Letting $\varsigma\rightarrow 0$, we have
\[
\int_{0}^{s_0}q(\gamma^{(0)}(s))\mathrm{d}s=0,
\]
since $\kappa_0=1$ and $\lim_{\varsigma\rightarrow 0}\kappa_j^{-1}=0$ for $j=1,2,3$ recalling the definition of $\kappa_j$ around \eqref{kappa_linear}. Finally, by differentiating in $s_0$ we obtain $q(q_0)=0$. Since $-e^{-\beta}q=\Box_g e^{-\beta}$, we finally conclude that
\[
\Box_g e^{-\beta}=0\quad\text{in }\mathbb{U}_g.
\]
This finishes the proof of Proposition \ref{conf_uniqueness}.\\

For the proof of Theorem \ref{ThmI}, there is one more remark we would like to make. Notice that in Proposition \ref{conf_uniqueness}, we assumed $\widetilde{g}=e^{-2\beta}g$ in $M$, while in Section \ref{interiordetermination} we have only proved that $\widetilde{g}=e^{-2\beta}g$ in $\mathbb{U}_g=\mathbb{U}_{\widetilde{g}}$ (up to diffeomorphism). However, one can easily adapt the proof by using the property of finite speed of propagation. We shall leave Proposition \ref{conf_uniqueness} and Lemma \ref{lemma_q} as they are, since they might be of independent interest.

\section*{Acknowledgements}
 Part of this research was conducted during the period PH served as a Clay Research Fellow; PH also gratefully acknowledges support from the NSF under Grant No.\ DMS-1955614 and a Sloan Research Fellowship. GU was partially supported by NSF, a Walker Professorship at UW and a Si-Yuan Professorship at IAS, HKUST. JZ was partially supported by Research Grant Council of Hong Kong (GRF grant 16305018). PH, GU as a Clay Senior Scholar, and JZ acknowledge the warm hospitality of MSRI in the fall of 2019, where part of this work was carried out.

\bibliographystyle{abbrv}
\bibliography{biblio}

\end{document}